\numberwithin{equation}{section}
\newtheorem{lemma}{Lemma}[section]
\newtheorem{theorem}[lemma]{Theorem}
\newtheorem*{theorem*}{Theorem}
\newtheorem{corollary}[lemma]{Corollary}
\newtheorem{question}[lemma]{Question}
\newtheorem{proposition}[lemma]{Proposition}
\newtheorem*{proposition*}{Proposition}
\newtheorem*{problem*}{Problem}
\newtheorem{theoremL}{Theorem}
\newtheorem{definition}[lemma]{Definition}
\newtheorem{remark}{Remark}
\newcommand{\setN}{{\mathbb{N}}}
\newcommand{\Cov}{\mathrm{Cov}}
\newcommand{\N}{{\mathbb N}}
\renewcommand{\P}{{\mathbb P}}
\newcommand{\Var}{\operatorname{Var}}
\title[]{Ergodic averages for commutative transformations along return times}
\author{Sebasti\'an Donoso}
\address{Departamento de Ingenier\'{\i}a Matem\'atica and Centro de Modelamiento Matem{\'a}tico, Universidad de Chile \& IRL 2807 - CNRS, Beauchef 851, Santiago, Chile}
\email{sdonosof@uchile.cl}
\author{Sovanlal Mondal}
\address{Department of Mathematics, The Ohio State University, Columbus, OH, USA}
\email{mondal.56@osu.edu}
\author{Vicente Saavedra-Araya}
\address{Department of Mathematics, University of Warwick, Coventry, United Kingdom}	
\email{vicente.saavedra-araya@warwick.ac.uk}
\thanks{ The first author was partially funded by ANID/Fondecyt/1241346 and Centro de Modelamiento Matemático (CMM) FB210005, BASAL funds for centers of excellence from ANID-Chile.}
\subjclass[2020]{Primary: 37A30; Secondary: 37A50.}
\begin{document}
\begin{abstract}
In this paper, we extend recent results on the convergence of ergodic averages along sequences
generated by return times to shrinking targets in rapidly mixing systems, partially answering questions posed by the first author, Maass and the third author. In particular, for a fixed parameter $a\in (0,1)$ and for \emph{generic} $y\in [0,1]$, we establish both $L^2$ and pointwise convergence for single averages and multiple averages for commuting transformations along the sequences $(a_n(y))_{n\in \setN}$,  obtained by arranging the set
$$\Big\{n\in\N: 0<2^ny \mod{1}<n^{-a} \Big\}$$
in an increasing order.
We also obtain new results for semi-random ergodic averages along sequences of similar type.
\end{abstract}
\maketitle

\section{Introduction}
A {\em measure preserving system}  (m.p.s. for short) is a tuple $(X,\mathcal{X},\mu,T)$  where  $(X,\mathcal{X},\mu)$ is a probability space and $T\colon X\to X$ is a {\em measure preserving transformation}. That is, $T$ is measurable with respect to $\mathcal{X}$ and $\mu(T^{-1}A)=\mu(A)$ for all $A\in \mathcal{X}$. The m.p.s. $(X,\mathcal{X},\mu,T)$ is ergodic if every set in the $\sigma$-algebra $\mathcal{I}(T):=\{A\in \mathcal{X}:\ \mu(A\Delta T^{-1}A)=0\}$ has either zero or full measure.

Two fundamental theorems in ergodic theory are von Neumann's mean ergodic theorem and Birkhoff's pointwise ergodic theorem. The mean ergodic theorem asserts that for any measure-preserving system $(X,\mathcal{X},\mu,T)$ and for any function $f\in L^2$, the {\em time averages} $(1/N)\sum_{n=1}^N f(T^n x)$ converge in $L^2$ to $\mathbb{E}(f|\mathcal{I}(T))$.
 If the system is ergodic, then the above limit becomes the corresponding \emph{space average} $\int_{X} f d\mu$. The pointwise ergodic theorem upgrades the mode of convergence to the pointwise ($\mu$-almost everywhere) setting. 
Following Furstenberg’s ergodic-theoretic proof of Szemerédi’s theorem \cite{Furstenberg_ergodic_szemeredi:1977} in the late 1970s, a major line of research in ergodic theory has focused on the study of {\em multiple ergodic averages}. These expressions generalize the classical von Neumann and Birkhoff ergodic theorems to averages of the form: $$\frac{1}{N} \sum_{n=1}^{N} \prod_{j=1}^k f_j(T_i^{a_j(n)}x)$$
where $(X, \mathcal{X}, \mu, T_i)$ is a m.p.s., the transformations $T_i$ commute, and $f_1, \dots, f_k$ are bounded measurable functions. Regarding the $L^2$-convergence of these averages, we refer the reader to the book by Host and Kra \cite{Host_Kra_nilpotent_structures_ergodic_theory:2018} for a comprehensive overview of the area, as well as Frantzikinakis' open problems in the field \cite{Frantzikinakis_open_problems:2016} (and their updates on Frantzikinakis' website). Regarding pointwise convergence, recent substantial progress was made by several authors using harmonic analysis \cite{kosz2025multilinearcirclemethodquestion,Krause_Mirek_Tao_ptwise_bilinear_ergodic:2022}, particularly regarding polynomial sequences of different degrees. For further details about pointwise convergence of ergodic averages, we refer the reader to the book by Krause \cite{KrauseBook}.

Our main results in this paper are Theorem \ref{thmA}, \ref{thmB}, \ref{thmC} and \ref{thmD}, where we establish convergence of multiple averages along random sequences in different settings. 
The study of ergodic averages along random sequences was first considered by Bourgain in \cite{BO}. Before we state the result, let us introduce some terminology.

\begin{definition}
    Let $(\Omega,\mathcal{F},\mathbb{P})$ be a probability space and let $(X_n)_{n\in \N}$ be a sequence of random variables defined in this space taking values in $\{0,1\}$ such that $\P(X_n=1)=n^{-a}$ for some $a>0$. For $\omega\in\Omega$ such that $X_m(\omega)=1$ for infinitely many $m\in\mathbb{N}$,  we define the \emph{random sequence of integers generated by $(X_n)_{n\in \N}$} as
    \begin{equation}
        a_n(\omega):=\inf \Big\{k\in \N:\ X_1(\omega)+\cdots+X_k(\omega)=n\Big\}.\label{eq:random_sequence}
    \end{equation}\label{def:random_sequence}
\end{definition}
\begin{definition} Let $p\geq 1$. A sequence of integers $(a_n)_{n\in\N}$ is said to be \emph{pointwise  $L^p$-good} if for every m.p.s.  $(X,\mathcal{X},\mu,T)$ and every function $f\in L^p$,
the ergodic averages $$\dfrac{1}{N}\sum_{n=1}^N f(T^{a_n}x)$$ converges for $\mu$-almost every $x\in X$.
    
\end{definition}

Under this terminology, Bourgain in \cite{BO} proved the following result.
\begin{theorem} Let $(\Omega,\mathcal{F},\mathbb{P})$ be a probability space, and let $(X_n)_{n\in\N}$ be a sequence of independent random variables, taking values in $\{0,1\}$, such that $\mathbb{P}(X_n=1)=n^{-a}$ for some $a\in (0,1)$. Then, if $p>1$, for $\mathbb{P}$-almost every $\omega\in \Omega$, the random sequence of integers $(a_n(\omega))_{n\in\N}$ is pointwise $L^p$-good. \label{random_indep}
\end{theorem}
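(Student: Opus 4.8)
The plan is to deduce pointwise $L^p$-goodness of $(a_n(\omega))_{n\in\setN}$, for $\P$-a.e.\ $\omega$, from the two standard ingredients: a maximal inequality valid for $\P$-a.e.\ $\omega$ and uniformly over all measure-preserving systems, together with pointwise convergence on a countable subclass of $L^p$ dense in $L^p$; Banach's principle then upgrades this to convergence on all of $L^p$. The first step is a reindexing. Writing $Y_M(\omega)=X_1(\omega)+\cdots+X_M(\omega)$, one has $\frac1N\sum_{n=1}^N f(T^{a_n(\omega)}x)=Y_M(\omega)^{-1}\sum_{k=1}^M X_k(\omega)\,f(T^kx)$ with $M=a_N(\omega)$ and $N=Y_M(\omega)$; since $M$ runs through $\{k:X_k(\omega)=1\}$ as $N$ runs through $\setN$, it suffices to study $B_M^\omega f(x):=Y_M(\omega)^{-1}\sum_{k\le M}X_k(\omega)f(T^kx)$ as $M\to\infty$ through all of $\setN$. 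A variance estimate plus Borel--Cantelli along $M=2^j$ (and monotonicity of $Y_M$) shows $Y_M(\omega)\sim\mu_M:=\sum_{k\le M}k^{-a}\sim M^{1-a}/(1-a)$ for $\P$-a.e.\ $\omega$, so in all subsequent estimates $Y_M$ may be replaced by $\mu_M$.

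For convergence on a dense class, fix $f\in L^\infty$ and split $X_k=k^{-a}+Z_k$, where the $Z_k$ are independent, mean zero, with $\Var Z_k\le k^{-a}$. The ``signal'' part $\mu_M^{-1}\sum_{k\le M}k^{-a}f(T^kx)$ converges $\mu$-a.e.\ to $\E(f\mid\CI(T))$ by Birkhoff's theorem together with Abel summation, using that the weights $k^{-a}$ are regularly varying, so the induced averaging is Toeplitz. For the ``noise'' part $N_M^\omega f(x):=\mu_M^{-1}\sum_{k\le M}Z_k(\omega)f(T^kx)$, independence and Fubini give
\[ \int_X\int_\Omega |N_M^\omega f(x)|^2\,d\P\,d\mu=\frac1{\mu_M^2}\sum_{k\le M}\Var Z_k\int_X|f(T^kx)|^2\,d\mu\;\lesssim\;\frac{\|f\|_2^2}{\mu_M}\xrightarrow[M\to\infty]{}0. \]
Passing to the subsequence $M=2^j$ (whose contributions are summable) gives $N_{2^j}^\omega f\to0$ a.e.\ on $X\times\Omega$, and the gaps are filled using Doob's $L^2$ maximal inequality applied, for fixed $x$, to the martingale $M\mapsto\sum_{k\le M}Z_k(\omega)f(T^kx)$. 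Taking a countable family of bounded functions dense in $L^p$ and intersecting the corresponding full-measure sets of $\omega$ produces a single $\Omega_0$ with $\P(\Omega_0)=1$ on which $B_M^\omega f$ converges a.e.\ for every $f$ in the dense family.

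The heart of the matter is the maximal inequality: for $\P$-a.e.\ $\omega$ and every measure-preserving system, $\big\|\sup_M |B_M^\omega f|\big\|_{L^p}\le C_\omega\|f\|_{L^p}$. By the Calderón transference principle (applied to the fixed sequence $(a_n(\omega))$) it is enough to prove this on $\ell^p(\setZ)$ for the operators $g\mapsto \mu_M^{-1}\sum_{k\le M}X_k(\omega)g(\cdot+k)$. Splitting $X_k=k^{-a}+Z_k$ once more, the signal part $\sup_M \mu_M^{-1}\sum_{k\le M}k^{-a}|g(\cdot+k)|$ is pointwise dominated (via Abel summation) by the Hardy--Littlewood maximal function, hence bounded on $\ell^p(\setZ)$ for all $p>1$ with an absolute constant. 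For the noise part one decomposes dyadically in $M$ and, inside each block $[2^j,2^{j+1})$, applies the Rademacher--Menshov splitting into $O(j)$ sub-block sums; each sub-block sum is a Fourier multiplier on $\ell^p(\setZ)$ whose symbol $\sum_k Z_k(\omega)e(k\theta)$ is a random trigonometric polynomial, and its expected operator norm is controlled by Salem--Zygmund-type estimates. The decisive point is that the logarithmic losses from Rademacher--Menshov and Salem--Zygmund are absorbed by the geometric gain $\mu_{2^j}^{-1}\asymp 2^{-j(1-a)}$ available because $a<1$: one gets $\sum_j \E\big[\,\|\mathcal{M}_j^\omega\|_{\ell^2\to\ell^2}\,\big]<\infty$, where $\mathcal{M}_j^\omega$ is the $j$-th block maximal operator, so this sum of operator norms is finite for $\P$-a.e.\ $\omega$, yielding the $\ell^2$ maximal inequality for a.e.\ $\omega$; interpolation with the Hardy--Littlewood bound and with the trivial $\ell^\infty$ bound then covers all $p\in(1,\infty)$.

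I expect this last step to be the main obstacle, for two reasons. First, in contrast with the dense-class step — where it sufficed to control an $L^2(X\times\Omega)$ norm for a \emph{fixed} $f$ — here one needs bounds on \emph{operator} norms, valid simultaneously for all inputs $g$, which is why the dyadic-block plus Rademacher--Menshov plus Salem--Zygmund machinery is unavoidable; the naive bound $\sup_M(\cdot)\le(\sum_M|\cdot|^2)^{1/2}$ already loses a power that defeats $a>0$. Second, pushing the range down to $1<p<2$ needs care, since no weak-$(1,1)$ endpoint is available (and the statement genuinely fails at $p=1$); one either runs the Fourier/Salem--Zygmund estimates directly in $\ell^p$ or interpolates against an $\ell^{p_0}$ bound for some $p_0$ slightly above $1$ obtained the same way.
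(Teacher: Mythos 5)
The paper does not prove Theorem~\ref{random_indep}; it is quoted from Bourgain's work as background, and the closest in-house argument is the proof of Theorem~\ref{thmA}, which gives the weaker $L^2$-goodness conclusion in a non-independent setting. That proof is nevertheless the right benchmark for locating the issue in your write-up.

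The maximal-inequality half of your outline is broadly consistent with the standard route: Calder\'on transference to $\ell^p(\mathbb{Z})$, the split $X_k = k^{-a} + Z_k$, Hardy--Littlewood domination for the signal part, dyadic blocking with Rademacher--Menshov and Salem--Zygmund-type estimates for the random exponential sums in the noise part, and then interpolation. Your caveat about $1<p<2$ is also correctly placed: naive interpolation between $\ell^2$ and $\ell^\infty$ only reaches $p\geq 2$, so one must run the Fourier estimates at an exponent below $2$ as well. That is where Bourgain's argument is genuinely technical, and a sentence in a sketch cannot substitute for it, but at least you have flagged it.

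The real gap is in your dense-class step. For a \emph{fixed} measure-preserving system and a \emph{fixed} $f$, you estimate $\int_X\int_\Omega|N_M^\omega f(x)|^2\,d\P\,d\mu$ and then upgrade to almost-everywhere convergence via Borel--Cantelli and Doob. This produces a full-measure set of $\omega$ that depends on the system, and intersecting over a countable dense family of $f$ does not remove this dependence, because that family itself lives inside the separable space $L^p(\mu)$ of one fixed system. The conclusion of the theorem, however, requires a single full-measure set of $\omega$ that works simultaneously for every measure-preserving system. The mechanism that achieves this uniformity --- and the one the paper uses for Theorem~\ref{thmA} --- is Herglotz's spectral theorem: it converts the $L^2(\mu)$-norm of $\frac{1}{\mu_M}\sum_{k\le M}Z_k(\omega)\,T^k f$ into an integral of $\big|\frac{1}{\mu_M}\sum_{k\le M}Z_k(\omega)\,e(kt)\big|^2$ against the spectral measure of $f$, and hence bounds it by
\[
\sup_{t\in[0,1]}\Big|\tfrac{1}{\mu_M}\sum_{k\le M}Z_k(\omega)e(kt)\Big|\cdot\|f\|_{L^2(\mu)},
\]
uniformly over all systems and all $f\in L^2(\mu)$. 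The needed almost-sure estimate $\sup_t\big|\sum_{k\le M}Z_k(\omega)e(kt)\big|\ll_\omega\sqrt{\mu_M\log M}$ is exactly Corollary~\ref{lem:Bourgain}, and combining it with Lemma~\ref{lacunary_trick} along a lacunary sequence yields $L^2$-goodness for a system-independent full-measure set of $\omega$. Your $L^2(X\times\Omega)$ computation is not wrong, but it does not deliver this uniformity, and so on its own it does not close the argument; replace that step with the spectral reduction and the proof structure becomes sound.
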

It is known from \cite[Theorem C]{MR1721622} that the above theorem does not hold when $a\geq 1$. See also \cite{MRW} for a finer result in this direction. From now onwards, we will confine our discussion to $a\in (0,1)$.

In \cite{LaVictoire}, LaVictoire extended the above theorem to $p=1$ if $a\in (0,1/2).$  Due to the Law of Large Numbers, for $\mathbb{P}$-almost every $\omega\in \Omega$, $a_n(\omega)/n^{1/(1-a)}$ converges to a non-zero constant. Hence, the random sequence of integers  $(a_n)_{n\in \N}$ can be understood as a random version of $\lfloor n^c \rfloor$ for $c:=\frac{1}{1-a}.$ It is worth mentioning that the deterministic case has been studied recently by Krause and Sun \cite{KrauseSun}, showing that $\lfloor n^c \rfloor$  is pointwise $L^1$-good whenever $c\in (1,8/7)$, extending previous work of Urban and Zienkiewicz \cite{UrbanZienkiewicz} and Mirek \cite{Mirek}. 

Motivated by Theorem \ref{random_indep} and  Return Times Theorem \cite{ReturnTimes}, the first author, Maass, and the third author studied in \cite{Donoso_Maass_Saavedra-Araya_ergodic_return_mixing:2025} random sequences of integers generated by non-independent random variables. As a consequence, they showed the following result.
    \begin{theorem}[Cf. Corollary A in \cite{Donoso_Maass_Saavedra-Araya_ergodic_return_mixing:2025}]
Let $a\in (0,1/2)$ be a real number and $r\geq 2$ be a positive integer. Suppose that $\left(a_n(y)\right)_{n\in \setN}$ is the sequence obtained by arranging the set
\begin{equation}
\left\{n\in \setN: r^n y \mod 1 \in (0,n^{-a})\right\},\label{eqn:DMS_CorollaryA}
\end{equation}
in an increasing order. Then $\left(a_n(y)\right)_{n\in \setN}$
is pointwise $L^2$-good for $\lambda$-almost every $y\in [0,1]$.\label{DMS_CorollaryA}
\end{theorem}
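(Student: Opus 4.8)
\medskip
\noindent\emph{Proof strategy.}
The plan is to realize the set in \eqref{eqn:DMS_CorollaryA} as the support of a thinned orbit of the map $\phi\colon y\mapsto ry\bmod 1$ (which preserves Lebesgue measure $\lambda$) and then run a second--moment argument powered by the exponential mixing of $\phi$. Put $X_k(y):=\setone_{(0,k^{-a})}(\phi^k(y))$, so that $\lambda(X_k=1)=k^{-a}$ and $(a_n(y))_{n}$ is exactly the random sequence of integers generated by $(X_k)_k$ in the sense of Definition~\ref{def:random_sequence}. A direct variance estimate gives $\sum_{k\le K}X_k(y)\sim\frac{1}{1-a}K^{1-a}$ for $\lambda$--a.e.\ $y$, and the monotone reindexing $N\leftrightarrow K=a_N(y)$ then reduces the theorem to the statement that, for $\lambda$--a.e.\ $y$ and for every m.p.s.\ $(X,\CX,\mu,T)$ and every $f\in L^2(\mu)$, the averages $A^y_Kf(x):=K^{-(1-a)}\sum_{k=1}^{K}X_k(y)f(T^kx)$ converge for $\mu$--a.e.\ $x$ (necessarily to $\tfrac1{1-a}\E(f\mid\CI(T))$).

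To handle these, I would write $X_k(y)=k^{-a}+\widetilde X_k(y)$ and treat the two pieces separately. The deterministic part $M_Kf(x):=K^{-(1-a)}\sum_{k\le K}k^{-a}f(T^kx)$ does not see $y$: since $k\mapsto k^{-a}$ is positive, decreasing and $\sum_{k\le K}k^{-a}\sim\tfrac1{1-a}K^{1-a}$, summation by parts against Birkhoff's pointwise theorem for $T$ gives $M_Kf\to\tfrac1{1-a}\E(f\mid\CI(T))$ $\mu$--a.e.\ for every $f\in L^1(\mu)$, and the same device applied to $|f|$ yields the pointwise bound $\sup_K M_K|f|\le C(a)\,M_T|f|$, with $M_T$ the maximal ergodic operator of $T$; in particular $\|\sup_KM_K|f|\|_{L^2(\mu)}\lesssim\|f\|_{L^2(\mu)}$. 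For the random part $E_Kf(x,y):=K^{-(1-a)}\sum_{k\le K}\widetilde X_k(y)f(T^kx)$ the key input is decay of correlations of $\phi$: writing $\phi^{l}=\phi^{\,l-k}\circ\phi^{k}$ with $\phi^{\,l-k}$ an $r^{\,l-k}$--to--one uniformly expanding map, the push--forward under $\phi^{\,l-k}$ of the conditional law of $\phi^k(y)$ given $X_k(y)=1$ is within $O(r^{-(l-k)})$ of $\lambda$ in total variation, whence $|\Cov(X_k,X_l)|\lesssim r^{-|k-l|}$ for $k\ne l$ while $\Var(X_k)\le k^{-a}$. Expanding the square and using $|\langle f\circ T^k,f\circ T^l\rangle_{L^2(\mu)}|\le\|f\|_{L^2(\mu)}^2$ gives
\[
\|E_Kf\|_{L^2(\mu\times\lambda)}^2\ \le\ \frac{\|f\|_{L^2(\mu)}^2}{K^{2(1-a)}}\sum_{k,l\le K}|\Cov(X_k,X_l)|\ \lesssim\ \|f\|_{L^2(\mu)}^2\,K^{2a-1},
\]
and this is precisely where the hypothesis $a<1/2$ enters, as it makes $K^{2a-1}\to0$ with, moreover, $\sum_m(2^m)^{2a-1}<\infty$; the same covariance bound applied to $f\equiv1$ also supplies the concentration behind the counting asymptotic used above.

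Summing over the lacunary scales $K_m=2^m$ and using Chebyshev plus Borel--Cantelli, $A_{K_m}f\to\tfrac1{1-a}\E(f\mid\CI(T))$ in $L^2(\mu\times\lambda)$ and $(\mu\times\lambda)$--a.e.; the lacunary maximal inequality $\big\|\sup_mA_{K_m}|f|\big\|_{L^2(\mu\times\lambda)}\lesssim\|f\|_{L^2(\mu)}$ then follows by dominating $\sup_mA_{K_m}|f|$ by $\sup_KM_K|f|$ plus the square function $\big(\sum_m|E_{K_m}(|f|)|^2\big)^{1/2}$, whose $L^2(\mu\times\lambda)$--norm equals $\big(\sum_m\|E_{K_m}(|f|)\|_{L^2(\mu\times\lambda)}^2\big)^{1/2}\lesssim\|f\|_{L^2(\mu)}$. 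Passing from $\{K_m\}$ to all $K$ is harmless because $A^y_K|f|\le 2^{\,1-a}A^y_{K_{m+1}}|f|$ for $K_m\le K<K_{m+1}$, and for $\lambda$--a.e.\ $y$ the count $\#\{K_m<k\le K_{m+1}:X_k(y)=1\}\sim\frac1{1-a}(K_{m+1}^{1-a}-K_m^{1-a})$ bounds the oscillation of $A^y_K$ over a block of ratio $\theta$ by $\lesssim(\theta^{1-a}-1)\|f\|_\infty$ for bounded $f$; refining $\theta\downarrow1$ along a countable set gives $\mu$--a.e.\ convergence of $A^y_Kf$ for bounded $f$, and the maximal inequality upgrades this to all $f\in L^2(\mu)$. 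The step I expect to be the real obstacle is making all of this hold \emph{simultaneously for $\lambda$--a.e.\ $y$}, with the quantifier over $(X,\CX,\mu,T)$ and $f$ coming afterwards: one fixes $y$ outside a single null set by transferring (Calderón transference and the Banach principle) the $L^2$ maximal inequality, and $L^2$--convergence on a countable dense class, to the model convolution operator on $\ell^2(\Z)$ where the estimates above are available, and then transfers back to an arbitrary m.p.s.; assembling these ingredients shows that for $\lambda$--a.e.\ $y$ the averages $A^y_Kf$ are $\mu$--a.e.\ Cauchy for every $f\in L^2(\mu)$ and every system, which is the assertion.
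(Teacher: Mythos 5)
Your identification of the thinned‐orbit structure, the random variables $X_k(y)=\setone_{(0,k^{-a})}(r^k y\bmod 1)$, the decomposition $X_k=k^{-a}+\widetilde X_k$, the covariance decay $|\Cov(X_k,X_l)|\lesssim r^{-|k-l|}$, the lacunary trick, and the role of the hypothesis $a<1/2$ are all correct and match the general philosophy of the paper. However, the proof has a genuine gap at exactly the place you flag as ``the real obstacle,'' and the way you propose to cross it does not work.

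The theorem requires a single null set of $y$ outside of which the averages converge for \emph{every} m.p.s.\ $(X,\CX,\mu,T)$ and every $f\in L^2(\mu)$. Your central estimate $\|E_Kf\|^2_{L^2(\mu\times\lambda)}\lesssim\|f\|^2_{L^2(\mu)}K^{2a-1}$ averages over $y$, and after Fubini/Borel--Cantelli only yields: for this fixed system and this fixed $f$, the convergence holds for a.e.\ $(x,y)$. The null set of $y$ it produces depends on the system, and there is no way to take a countable union over ``all systems''. The standard device to fix $y$ first --- which you correctly name (Herglotz/Calder\'on transference) --- requires a system-independent, a.s.-in-$y$ bound on the Fourier multiplier $\sup_{t}\bigl|\sum_{k\le K}\widetilde X_k(y)e(kt)\bigr|$; once that is available, spectral theory controls the $L^2(\mu)$ norm of $E_K^y f$ for every m.p.s.\ simultaneously. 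Your second-moment argument does not give this: Chebyshev gives only polynomial tails for each fixed $t$, and a union bound over the $O(K^3)$ net points required (the partial sum is $K^2$-Lipschitz in $t$) forces a threshold on the order $K^{3/2}\cdot K^{(1-a)/2}=K^{2-a/2}$, which dwarfs $K^{1-a}$ for every $a\in(0,1)$. The sentence ``the estimates above are available'' for the model convolution operator on $\ell^2(\Z)$ is therefore not true: the estimates are only available after integrating out $y$, not for a fixed $y$.

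The paper closes this gap with a different mechanism, and this is the missing idea. It does not rely on decay of correlations for the doubling map at all. Instead, it approximates each interval $(0,n^{-a})$ by an $r$-adic interval $(0,\gamma_n)$ with controlled denominator (Lemmas~\ref{lem:prop1}--\ref{lem:prop2}). This has two consequences. First (Lemma~\ref{lem:prop2}), the two return-time sequences differ by a finite set for a.e.\ $y$, so the substitution is harmless. Second (Lemma~\ref{lem:prop1}), the approximated indicators are \emph{genuinely independent} whenever the indices are spaced more than $2\log N$ apart. Then, after splitting $\{1,\dots,N\}$ into $R\approx 2\log N$ arithmetic progressions of common difference $R$, one applies a Bourgain-type concentration inequality (Corollary~\ref{lem:Bourgain}) --- which uses independence to get \emph{exponential} tails --- on each progression, and a union bound over the $R$ progressions and the net in $t$. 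This gives $\sup_t\bigl|\sum_{n\le N}Z_n(y)e(nt)\bigr|\ll_y(\log N)^{3/2}N^{(1-a)/2}$ a.s.\ in $y$, which is summable against $W_N\sim N^{1-a}$ along lacunary scales, and crucially is a statement about $y$ alone. Your approach does produce a correct and clean proof of the $L^2(\mu\times\lambda)$ statement, but it cannot yield the universal quantifier over systems without the independence-plus-concentration step, or some substitute such as a high-moment estimate with correspondingly better tails, neither of which you supply.
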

Here and throughout the paper, $\lambda$ denotes the Lebesgue measure on $[0,1]$.  Note that the sequence defined in (\ref{eqn:DMS_CorollaryA}) corresponds to the random sequence of integers generated by the non-independent random variables $$X_n(y):=\mathbbm{1}_{(0,n^{-a})}(r^nx\mod 1).$$
Recently, an extension of Theorem \ref{DMS_CorollaryA} was provided  by the third author in \cite{saavedraaraya2025hittingtimesshrinkingtargets}, replacing the sequence $r^n$ in (\ref{eqn:DMS_CorollaryA}) by more general sequences of integers. 

As first result of this paper, we extend Theorem \ref{DMS_CorollaryA} by showing that the result holds for any $a\in (0,1)$,  partially answering \cite[Question 1]{Donoso_Maass_Saavedra-Araya_ergodic_return_mixing:2025}.

\begin{theoremL}\label{thmA}\label{thm:A}
Let $a\in (0,1)$ be a real number and $r\geq 2$ be a positive integer. Suppose that $\left(a_n(y)\right)_{n\in \setN}$ is the sequence obtained by arranging the set
\begin{equation}
\left\{n\in \setN: r^n y \mod 1 \in \left(0,n^{-a}\right)\right\}
\end{equation}
in an increasing order. Then $\left(a_n(y)\right)_{n\in \setN}$
is pointwise $L^2$-good for $\lambda$-almost every $y\in [0,1]$.
\end{theoremL}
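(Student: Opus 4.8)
The plan is to recognize $(a_n(y))_{n\in\N}$ as the random sequence of integers generated, in the sense of Definition~\ref{def:random_sequence}, by the weights $X_n(y):=\mathbbm{1}_{(0,n^{-a})}(r^ny\bmod 1)$, and to run the Bourgain-type two-step scheme, exploiting that the map $R\colon y\mapsto ry\bmod 1$ mixes exponentially. Set $w_n(y):=X_n(y)-n^{-a}$, so $\int_0^1 w_n\,d\lambda=0$, and $W_M(y):=\sum_{n\le M}X_n(y)$; a second-moment estimate will give $W_M(y)\sim M^{1-a}/(1-a)$ for $\lambda$-a.e.\ $y$. Since $W_M$ increases by unit steps, the $N$-th time average $\frac1N\sum_{j\le N}f(T^{a_j(y)}x)$ equals $\frac{1}{W_M(y)}\sum_{n\le M}X_n(y)f(T^nx)$ for the least $M$ with $W_M(y)=N$, so it suffices to prove convergence of $\frac{1}{M^{1-a}}\sum_{n\le M}X_n(y)f(T^nx)$ as $M\to\infty$, for $\lambda$-a.e.\ $y$, $\mu$-a.e.\ $x$, and every $f\in L^2(\mu)$ (the m.p.s.\ $(X,\CX,\mu,T)$ being arbitrary). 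By the Banach principle this reduces to two claims: (i) a maximal inequality $\|\sup_M\frac{1}{M^{1-a}}|\sum_{n\le M}X_n(y)f(T^n\cdot)|\|_{L^2(\mu)}\le C(y)\|f\|_{L^2(\mu)}$ for $\lambda$-a.e.\ $y$, with $C(y)$ independent of the system; and (ii) pointwise convergence for $f$ in a dense subclass, say $f\in L^\infty(\mu)$, the limit being $\E(f\mid\CI(T))$ (as one sees by testing $f\equiv 1$).

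The engine for both is correlation decay of $R$: for $n\le m$, $|\Cov_\lambda(X_n,X_m)|\lesssim_r m^{-a}r^{-(m-n)}$ once $m-n$ is large (and $\lesssim m^{-a}$ always), and, for each $j\ge1$, an approximate-independence estimate for $\int_0^1\prod_{i=1}^{2j}X_{n_i}\,d\lambda$ with error exponentially small in the consecutive gaps of the $n_i$; all follow from computing the distribution of $r^my$ on a union of short intervals. For (ii), split $X_n(y)=n^{-a}+w_n(y)$: the deterministic piece $\frac{1}{M^{1-a}}\sum_{n\le M}n^{-a}f(T^nx)$ converges to $\frac{1}{1-a}\E(f\mid\CI(T))$ $\mu$-a.e.\ by Birkhoff's theorem and Abel summation, while the covariance estimate gives $\int_0^1\int_X|\sum_{n\le M}w_n(y)f(T^nx)|^2\,d\mu\,d\lambda\lesssim_r M^{1-a}\|f\|_{L^\infty}^2$, so along $M_k:=\lceil k^{2/(1-a)}\rceil$ the normalized random piece tends to $0$ a.e.\ by Borel--Cantelli; since $M_{k+1}^{1-a}-M_k^{1-a}=o(M_k^{1-a})$ and, a.e., the increments $\sum_{n\in(M_k,M]}(X_n(y)+n^{-a})$ are $o(M_k^{1-a})$ for $M<M_{k+1}$, the gaps contribute $o(1)$. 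This settles (ii) for all $a\in(0,1)$.

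Claim~(i) is the crux, and is where passing from the range $a<1/2$ of \cite{Donoso_Maass_Saavedra-Araya_ergodic_return_mixing:2025} to $a<1$ is bought. By Calderón transference it is enough to bound, for $\lambda$-a.e.\ $y$, the operator $g\mapsto\sup_M M^{-(1-a)}|\sum_{n\le M}X_n(y)g(\cdot+n)|$ on $\ell^2(\Z)$ with norm depending only on $a,r$. The $n^{-a}$ part is pointwise dominated by the discrete Hardy--Littlewood maximal operator. For the $w_n(y)$ part, split the supremum into a dyadic part $\sup_k$ and a within-block oscillation; by Plancherel the dyadic part is controlled by $\sum_k 2^{-2k(1-a)}\sup_\theta|\sum_{n\le 2^k}w_n(y)e(n\theta)|^2$ (where $e(n\theta)=\exp(2\pi\mathrm{i}n\theta)$), and the oscillation by a comparable quantity after a Rademacher--Menshov argument in $M$. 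The point is to upgrade the second moment to all moments: the approximate-independence estimates give $\E_\lambda|\sum_{n\le M}w_n(y)c_n|^{2j}\lesssim_{j,r}(\sum_n n^{-a}|c_n|^2)^j+(\text{lower order})$ for arbitrary coefficients $(c_n)$ and every $j$, i.e.\ subgaussian concentration for the weakly dependent weights $(w_n(y))$. A union bound over an $O(2^k)$-net in $\theta$ (using the same moment bound for the derivative to absorb the net error) then yields $\sup_\theta|\sum_{n\le 2^k}w_n(y)e(n\theta)|\lesssim_r(k\,2^{k(1-a)})^{1/2}$ for $\lambda$-a.e.\ $y$, and likewise with the extra $\sup_M$; since $\sum_k 2^{-2k(1-a)}\cdot k\,2^{k(1-a)}=\sum_k k\,2^{-k(1-a)}<\infty$ for every $a\in(0,1)$, the maximal operator is $\ell^2(\Z)$-bounded a.e., proving (i).

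The main obstacle is (i): producing the higher-moment (subgaussian) concentration for the non-independent weights $(w_n(y))$ from the multiple mixing of $R$, with all implicit constants depending only on $a$ and $r$ so that transference applies, and arranging the dyadic-plus-oscillation decomposition so that the sum over scales converges for every $a<1$ rather than only for $a<1/2$ (the range accessible from the second moment alone). The remaining ingredients are either standard or routine adaptations of the mixing computations in \cite{Donoso_Maass_Saavedra-Araya_ergodic_return_mixing:2025,saavedraaraya2025hittingtimesshrinkingtargets}.
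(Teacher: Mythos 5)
Your proposal is a genuinely different route from the paper's. The paper does not invoke Calder\'on transference, the Banach principle, dyadic decomposition, or Rademacher--Menshov; instead, after an initial approximation step, it applies the Herglotz spectral theorem directly to bound $\|\sum_{n\le N} Z_n(y)T^nf\|_{L^2(\mu)}$ by $\sup_t |\sum_{n\le N}Z_n(y)e(nt)|\,\|f\|_{L^2(\mu)}$, then sums over a lacunary sequence and invokes the lacunary trick (Lemma~\ref{lacunary_trick}), exactly as in \cite{FLW}. Both arguments therefore funnel into the same exponential-sum estimate $\sup_t|\sum_{n\le N}Z_n(y)e(nt)|\ll (\log N)^{O(1)}N^{(1-a)/2}$, but they get there very differently.

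The paper's central device, which your proposal does not use, is to replace the targets $I_n=(0,n^{-a})$ by nearby $r$-adic intervals $J_n=(0,\gamma_n)$ whose denominators are $\ll r^{2\log n}$. This makes the sets $\{y:r^ny\bmod1\in J_{n_i}\}$ \emph{exactly} independent once the gaps $n_{i+1}-n_i$ exceed $2\log N$ (Lemma~\ref{lem:prop1}), at the cost of a summable symmetric-difference error (Lemma~\ref{lem:prop2}). After partitioning $[N]$ into $R=\lceil 2\log N\rceil$ arithmetic progressions, the variables on each progression are genuinely i.i.d.\ (not merely mixing), so the Bernstein-type concentration lemma with a built-in sup over frequencies (Lemma~\ref{lem:technical}/Corollary~\ref{lem:Bourgain}) applies verbatim on each block, and a union bound over the $O(\log N)$ blocks finishes the estimate. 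The only loss is the $O(\log N)$ number of blocks, which is absorbed because $W_N\sim N^{1-a}$ for every $a\in(0,1)$.

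By contrast, you propose to work with the original $X_n$ and to derive subgaussian behaviour of $\sum w_n(y)c_n$ directly from exponential multiple mixing of $y\mapsto ry\bmod 1$, via $2j$-th moment bounds $\E_\lambda|\sum w_n c_n|^{2j}\lesssim_{j,r}(\sum n^{-a}|c_n|^2)^j+\text{l.o.t.}$ and a net argument for $\sup_\theta$. This is the step you yourself flag as ``the main obstacle,'' and rightly so: for Bernoulli-type weights with small success probabilities the moment constants are Bernstein (sub-gamma) rather than sub-Gaussian, and deriving the required growth in $j$ from the multiple-mixing cluster expansion, uniformly in $N$, is a substantial combinatorial task that the proposal does not carry out. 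The paper's $r$-adic approximation eliminates this difficulty at a stroke, since on each arithmetic progression one can cite an off-the-shelf concentration inequality for genuinely independent variables. If you do push the cumulant/cluster computation through, your route would have the advantage of avoiding the measure-theoretic approximation step and of being formulated so that the range $a\in(0,1)$ falls out of a scale-summation $\sum_k k\,2^{-k(1-a)}<\infty$; but as written, that moment bound is a gap, whereas the paper's argument is complete.
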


Regarding multiple averages,  the random sequences framework has proven to be a powerful approach, providing pointwise results in cases where the corresponding deterministic setting remains out of reach. An interesting result in this direction was provided by Frantzikinakis, Lesigne and Wierdl in \cite{FLW}.

\begin{theorem} Let $(\Omega,\mathcal{F},\mathbb{P})$ be a probability space, let $(X_n)_{n\in\N}$ be a sequence of independent random variables, taking values in $\{0,1\}$, such that $\mathbb{P}(X_n=1)=n^{-a}$ for some $a\in (0,1/2)$, and let $(a_n)_{n\in\N}$ be the random sequence of integers generated by $(X_n)_{n\in\N}$. 
Then, for $\mathbb{P}$-almost every $\omega\in \Omega$, the following holds: For every probability space $(X,\mathcal{X},\mu)$, every pair of commuting measure-preserving transformations $T_1,T_2:X\to X$, and every $f_1,f_2\in L^{\infty}(\mu)$, the averages
    \begin{equation}
        \dfrac{1}{N}\sum_{n=1}^N f_1 \circ T_1^{a_n(\omega)}\cdot f_2\circ T_2^{a_n(\omega)}\label{FLW_double}
    \end{equation}
    converge in $L^2(\mu)$, and their limit coincides with the $L^2(\mu)$-limit of
\begin{equation*}
\frac{1}{N}\sum_{n=1}^N
f_1\circ T_1^{n} \cdot f_2\circ T_2^{n}.
\end{equation*}
     If $T_1$ and $T_2$ are powers of the same transformation, the convergence can be improved to be pointwise.\label{FLW_Szemeredi}
\end{theorem}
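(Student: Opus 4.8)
The plan is to adapt Bourgain's argument behind Theorem~\ref{random_indep} to the bilinear setting. Normalize $\|f_i\|_\infty\le1$, write $c_n:=X_n(\omega)-n^{-a}$, put $u_n:=f_1\circ T_1^n\cdot f_2\circ T_2^n\in L^2(\mu)$ (so $\|u_n\|_{L^2(\mu)}\le1$), and let $S_N:=\sum_{n\le N}X_n$. The average of the first $S_N$ terms of $(f_1\circ T_1^{a_m(\omega)}\cdot f_2\circ T_2^{a_m(\omega)})_m$ equals $\frac1{S_N}\sum_{n\le N}X_n u_n$, and every $M\in\N$ arises this way; since Kolmogorov's strong law gives $S_N/N^{1-a}\to\frac1{1-a}$ $\P$-a.s.\ and Abel summation gives $\frac1{N^{1-a}}\sum_{n\le N}n^{-a}u_n\to\frac1{1-a}L$ in $L^2(\mu)$, where $L$ is the $L^2$-limit of $\frac1N\sum_{n\le N}u_n$ (which exists by the $L^2$-convergence theorem for two commuting transformations, Conze--Lesigne), the whole statement reduces to proving that, on a \emph{single} $\P$-full set and \emph{uniformly over all systems and all admissible $f_1,f_2$},
\[
W_N(\omega):=\frac1{N^{1-a}}\sum_{n\le N}c_n(\omega)\,u_n\;\longrightarrow\;0\quad\text{in }L^2(\mu).
\]

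To achieve the uniformity over the uncountable family of systems I would first apply van der Corput: for $1\le H<N$,
\[
\|W_N\|_{L^2(\mu)}^2\;\lesssim\;\frac{N^{2a-1}}{H}\sum_{|h|\le H}\Big|\sum_n c_n c_{n+h}\,\langle u_{n+h},u_n\rangle_{L^2(\mu)}\Big|,
\]
and then use commutativity to eliminate the system: with $S:=T_1^{-1}T_2$ and after shifting by $T_1^{-n}$ one gets $\langle u_{n+h},u_n\rangle=\int (\overline{f_1}\cdot f_1\circ T_1^{h})\cdot S^n(\overline{f_2}\cdot f_2\circ T_2^{h})\,d\mu$, hence by the spectral theorem $\langle u_{n+h},u_n\rangle=\int_\T e^{2\pi int}\,d\sigma_h(t)$ for a complex measure $\sigma_h$ on $\T$ with $|\sigma_h|(\T)\le\|f_1\|_\infty^2\|f_2\|_\infty^2\le1$. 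Using $|\sigma_h|(\T)\le1$ (and handling $h=0$ by $\sup_t|\sum_{n\le N}c_n^2 e^{2\pi int}|\le\sum_{n\le N}c_n^2$) this gives the crucial \emph{system-free} bound
\[
\|W_N\|_{L^2(\mu)}^2\;\lesssim\;\frac{N^{2a-1}}{H}\sum_{n\le N}c_n^2\;+\;N^{2a-1}\cdot\frac1H\sum_{1\le|h|\le H}\ \sup_{t\in\T}\Big|\sum_{n\le N}c_n c_{n+h}\,e^{2\pi int}\Big|.
\]

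It remains to control the right-hand side probabilistically. Choose $H=H(N):=N^{a}(\log N)^2$; then the first term is $O\big((\log N)^{-2}\big)$ once one knows $\sum_{n\le N}c_n^2=O(N^{1-a})$, which holds $\P$-a.s.\ for all large $N$ by Bernstein's inequality. For the second term, fix $h\ne0$: independence of the $X_n$ gives $\E[\sum_{n\le N}c_nc_{n+h}e^{2\pi int}]=0$ and $\E|\sum_{n\le N}c_nc_{n+h}e^{2\pi int}|^2\lesssim\sum_{n\le N}n^{-2a}\lesssim N^{1-2a}$; splitting the $h$-dependent sequence $(c_nc_{n+h})_n$ into $O(h)$ residue classes and then into two groups of mutually independent classes lets Bernstein apply with total variance $\lesssim N^{1-2a}$, and a union bound over a polynomial net in $t$ yields $\E_\omega[\sup_t|\sum_{n\le N}c_nc_{n+h}e^{2\pi int}|]\lesssim N^{1/2-a}\sqrt{\log N}$, uniformly in $|h|\le H$. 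Hence the $\omega$-expectation of the second term is $\lesssim N^{a-1/2}\sqrt{\log N}$, which is summable along $N_j:=j^A$ for a sufficiently large integer $A$; by Markov and Borel--Cantelli, $\|W_{N_j}\|_{L^2(\mu)}\to0$ $\P$-a.s.\ on one null set valid for all systems simultaneously, and the passage to general $N$ is done on the short gaps by the trivial system-free bound $\|\sum_{N_j<n\le N}c_nu_n\|\le\sum_{N_j<n\le N}|c_n|=O(N_j^{1-a}/j)$. This is precisely where $a<1/2$ is needed: the $h\ne0$ part of $\|W_N\|^2$ is of order $N^{2a-1}$ times the fluctuation $\asymp N^{1/2-a}$ of a random exponential sum of variance $\asymp N^{1-2a}$, and $N^{2a-1}\cdot N^{1/2-a}=N^{a-1/2}\to0$ only for $a<1/2$.

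For the pointwise statement, write $T_1=T^{p_1}$, $T_2=T^{p_2}$. The deterministic average $\frac1N\sum_{n\le N}f_1(T^{p_1n}x)f_2(T^{p_2n}x)$ converges $\mu$-a.e.\ by Bourgain's double recurrence theorem, so it again suffices to prove the pointwise analogue of $W_N\to0$; for this I would establish along $(N_j)$ a maximal inequality $\E_\omega\|\sup_j|W_{N_j}|\|_{L^2(\mu)}^2<\infty$ by a Rademacher--Menshov ($T^*T$) argument in the $x$-variable together with the same second-moment estimates in $\omega$, pass to all $N$ by the gap bound above, and identify the a.e.\ limit as $0$ using the $L^2$ convergence already obtained; the hypothesis that $T_1,T_2$ be powers of one transformation is used only to invoke the deterministic bilinear pointwise theorem. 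The step I expect to be the real obstacle is exactly the uniformity over all measure-preserving systems from a single exceptional $\omega$-set: it is what forces the van der Corput $+$ spectral reduction (so that the only surviving random objects are the universal exponential sums in the $c_n$), and, together with the probabilistic balance above, it is what fixes the threshold $a<1/2$.
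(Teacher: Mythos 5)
This statement is a theorem of Frantzikinakis, Lesigne and Wierdl that the paper cites rather than reproves; the closest in-paper analogue is the proof of Theorem~\ref{thmB}, which adapts the same strategy to non-independent random variables. Your argument follows essentially the same FLW blueprint (reduction to $\frac{1}{N^{1-a}}\sum c_n u_n\to 0$, van der Corput, spectral theorem to obtain system-free exponential sums in the $c_n$, concentration plus Borel--Cantelli along a polynomial lacunary sequence, and the Conze--Lesigne / Bourgain deterministic convergence theorems as input) and the threshold analysis $a<1/2$ is correct. The structure and the key lemmas are the right ones.

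Three small remarks, none fatal. First, your description of the independence decomposition for $(c_nc_{n+h})_n$ (``$O(h)$ residue classes and then two groups'') is muddled: the clean device, used in the paper via the sets $\Lambda_{1,m},\Lambda_{2,m}$ in \eqref{eq:Lambda} and in Lemma~\ref{lem:technical}, is to split $[N]$ into the two interleaved unions of length-$h$ blocks; within each union the products $c_nc_{n+h}$ are genuinely mutually independent, which is what Bernstein needs. Second, the gap bound $\sum_{N_j<n\le N}|c_n|=O(N_j^{1-a}/j)$ is not the trivial pointwise bound $|c_n|\le1$ (that would only give $O(N_j^{1-1/A})$, which is too large); it requires the strong law for $\sum X_n$ over the window, so you should say that this is a concentration estimate and not a deterministic one --- with that fix the choice $A>2/(1-2a)$ works for all $a\in(0,1/2)$. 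Third, the Rademacher--Menshov maximal inequality you invoke for the pointwise case is unnecessary: once $\sum_j\|W_{N_j}(\omega)\|_{L^2(\mu)}^2<\infty$ $\P$-a.s.\ (which your second-moment estimate already gives upon summing rather than merely applying Markov), Chebyshev and Borel--Cantelli in $x$ yield $W_{N_j}(\omega)(x)\to0$ $\mu$-a.e., and the gap bound handles intermediate $N$ --- exactly the lacunary-trick route (Lemma~\ref{lacunary_trick}) used in the paper for Theorems~\ref{thmA} and~\ref{thmB}, and the hypothesis $T_1,T_2$ powers of one $T$ is used only to invoke Bourgain's double recurrence theorem for the deterministic piece, as you correctly note.
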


This result was later extended to several transformations by Frantzikinakis, Lesigne and Wierdl in \cite{Frantzikinakis_Lesigne_Wierdl_Szemeredi}.
In this paper, we answer affirmatively the open question 5 posed by the first author, Maass and the third author in \cite{Donoso_Maass_Saavedra-Araya_ergodic_return_mixing:2025}, obtaining an analogue of Theorem \ref{thmA} for averages of the form (\ref{FLW_double}).

\begin{theoremL}\label{thmB}
Let $a\in (0,1/2)$, and let $(a_n(y))_{n\in\mathbb{N}}$ be the random sequence defined in
\eqref{eqn:DMS_CorollaryA}. Then, for $\lambda$-almost every $y\in[0,1]$, the following holds:

For every probability space $(X,\mathcal{X},\mu)$, every pair of commuting
measure-preserving transformations $T_1,T_2\colon X\to X$, and every
$f_1,f_2\in L^{\infty}(\mu)$, the averages
\begin{equation}
\frac{1}{N}\sum_{n=1}^N
f_1\circ T_1^{a_n(y)} \cdot f_2\circ T_2^{a_n(y)}
\end{equation}
converge in $L^2(\mu)$, and their limit coincides with the $L^2(\mu)$-limit of
\begin{equation}\label{avgs}
\frac{1}{N}\sum_{n=1}^N
f_1\circ T_1^{n} \cdot f_2\circ T_2^{n}.
\end{equation}
Moreover, if $T_1$ and $T_2$ are powers of the same transformation, the convergence can be improved to be pointwise.
\end{theoremL}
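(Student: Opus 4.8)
The plan is to deduce Theorem~\ref{thmB} from Bourgain's machinery in the style of \cite{FLW}, but adapted to the non-independent setting coming from the doubling map, exactly as Theorem~\ref{thmA} adapts Theorem~\ref{random_indep}. The key structural fact is that, for the random variables $X_n(y)=\mathbbm{1}_{(0,n^{-a})}(r^n y \bmod 1)$, although the $X_n$ are not independent, they are \emph{sufficiently decorrelated}: writing $f_n(y)=X_n(y)-n^{-a}$, one has good bounds on the moments $\int \prod_{i} f_{n_i}(y)\,d\lambda(y)$ whenever the $n_i$ are spread out, because the intervals $(0,n^{-a})$ pulled back under $y\mapsto r^{n}y \bmod 1$ mix rapidly. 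This is precisely the mechanism already exploited to prove Theorem~\ref{DMS_CorollaryA} and Theorem~\ref{thmA}; I would isolate it as a lemma stating that the ``random'' weights $w_N(n):=X_n(y)N^{a}/\#\{k\le N: X_k(y)=1\}$ behave, for a.e.\ $y$, like a sequence of weights with the same second-moment and equidistribution properties as in the independent case. In particular, after the usual change of variables $a_n(y)$, the averages in the statement become, up to a normalization tending to $1$ by the strong law (valid since $a<1$), the weighted averages
\begin{equation*}
\frac{1}{N}\sum_{n=1}^N X_n(y)\, N^{a}\, \bigl(f_1\circ T_1^{n}\cdot f_2\circ T_2^{n}\bigr)(x).
\end{equation*}

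Next, I would split $X_n(y)=n^{-a}+f_n(y)$, giving a main term $N^{-1}\sum_{n\le N} f_1\circ T_1^{n}\cdot f_2\circ T_2^{n}$ — which converges in $L^2$ to the asserted limit by the commuting-transformations $L^2$-convergence theorem (Host--Kra / Tao), and converges pointwise when $T_1,T_2$ are powers of a single transformation by the bilinear pointwise theorem of Bourgain (or Krause--Mirek--Tao) — and an error term $N^{a-1}\sum_{n\le N} f_n(y)\, f_1\circ T_1^{n}\cdot f_2\circ T_2^{n}$. For the $L^2$ convergence (indeed, convergence to $0$) of the error term, I would adapt Bourgain's entropy / metric-entropy argument from \cite{FLW}: one must show that the family of operators $f_1\otimes f_2\mapsto N^{a-1}\sum_{n\le N} f_n(y)\, f_1\circ T_1^{n}\cdot f_2\circ T_2^{n}$ has, for a.e.\ $y$, the requisite oscillation bounds along a lacunary sequence $N_k=2^k$. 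The decorrelation lemma above feeds the variance computation; the transfer to a Fourier-analytic / circle-method estimate on $\Z_M$ for the bilinear multiplier $\sum_{n\le N} f_n(y) e(n\xi_1+n\xi_2)$ replaces the random trigonometric polynomial estimate of \cite{FLW}, and the same $\ell^2$-maximal and variation inequalities close the pointwise case when $T_1=T^{c_1}$, $T_2=T^{c_2}$.

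For the pointwise improvement when $T_1,T_2$ are powers of the same $T$, after the reduction above the error term is $N^{a-1}\sum_{n\le N} f_n(y)\, g(T^{n}x)$ for a bounded $g$ (with $g=f_1\circ T^{c_1-c_2}\cdot f_2$ after absorbing one power), so this is a single-transformation weighted average with random weights $f_n(y)$ of mean zero and slowly decaying $\ell^2$ mass $\sum_{n\le N} f_n(y)^2 \sim N^{1-a}$; the pointwise convergence to $0$ for a.e.\ $(y,x)$ follows from the same oscillation/maximal-inequality apparatus used in proving Theorem~\ref{thmA} — indeed this is essentially the $k=1$ case applied to the weight system $(f_n(y))$ rather than $(X_n(y))$ — combined with a Fubini argument to pass from ``a.e.\ $(y,x)$'' to ``for a.e.\ $y$, for every system and every $x$ a.e.'' via a countable dense family of systems (the universality being inherited, as in \cite{FLW}, from the fact that the conclusion is really a statement about the weight sequence acting on the Bernoulli shift).

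The main obstacle I anticipate is the bilinear Fourier estimate controlling $\sup_k \bigl\| N_k^{a-1}\sum_{n\le N_k} f_n(y)\, e(n\xi_1+n\xi_2)\bigr\|$ uniformly enough in $y$ (off a null set) to run Bourgain's entropy argument: in the independent case of \cite{FLW} this rests on Salem--Zygmund-type bounds for random trigonometric sums, and here one must instead extract comparable bounds from the arithmetic structure of $X_n(y)=\mathbbm{1}_{(0,n^{-a})}(2^n y)$, i.e.\ from quantitative mixing of the doubling map against shrinking intervals. I expect this to be where the restriction $a\in(0,1/2)$ is genuinely used (as it is in \cite{FLW} and in Theorem~\ref{DMS_CorollaryA}), since below $1/2$ the second-moment fluctuations of the partial sums $\sum_{n\le N} f_n(y)$ are of lower order than the main term, which is exactly what makes the decorrelation estimates strong enough to survive the union bound over the lacunary scales $N_k=2^k$.
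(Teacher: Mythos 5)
Your high-level structure matches the paper's: reduce to the lacunary subsequence, write $X_n = \sigma_n + Z_n$, and show that the error term $W_N^{-1}\sum Z_n(y)\, f_1\circ T_1^n\cdot f_2\circ T_2^n$ vanishes, leaving the Conze--Lesigne (resp.\ Bourgain) limit for the unweighted averages. But there is a genuine gap, and you have in fact identified it yourself in your final paragraph without closing it.

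You rely on ``sufficient decorrelation'' of the $X_n$ to play the role that genuine independence plays in \cite{FLW}. Second-moment decorrelation (the only precise statement your proposed lemma promises) is far too weak: the FLW machinery, and the paper's adaptation, needs exponential concentration for the random trigonometric polynomials $\sup_t \bigl|\sum Z'_{n+m}Z'_n\,e(nt)\bigr|$, i.e.\ Salem--Zygmund-type tail bounds, and those cannot be deduced from covariance bounds alone. The paper's key device, which you never mention, is to \emph{manufacture} exact independence: replace the intervals $(0,n^{-a})$ by $r$-adic approximations $J_n$ (Lemmas~\ref{lem:prop1}--\ref{lem:prop2}), so that the modified indicators $X'_n$ are \emph{genuinely} independent whenever the indices are $>2\log N$ apart, and then split $[N]$ into $R=\lceil 2\log N\rceil$ residue classes mod $R$ so that within each class one can apply the off-the-shelf concentration bound (Corollary~\ref{lem:Bourgain} / Lemma~\ref{lem:technical}). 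The $\log N$ loss from the residue-class splitting is absorbed because the conclusion is a summability estimate along a lacunary sequence. Your proposed route, extracting Salem--Zygmund bounds ``from quantitative mixing of the doubling map,'' is not how the paper proceeds and is substantially harder; quantitative mixing against shrinking targets gives variance control, not sub-Gaussian tails.

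Two smaller issues. First, the paper does not use Bourgain's metric-entropy argument here: the reduction is simply van der Corput plus the spectral (Herglotz) theorem plus Borel--Cantelli summability along $\lfloor\gamma^k\rfloor$, which handles $L^2$ and pointwise simultaneously. Second, your pointwise reduction is incorrect: for $T_1=T^{c_1}$, $T_2=T^{c_2}$ the term $f_1\circ T_1^n\cdot f_2\circ T_2^n$ equals $T^{c_2 n}\bigl(f_1\circ T^{(c_1-c_2)n}\cdot f_2\bigr)$, not $T^{c_2 n}\bigl(f_1\circ T^{c_1-c_2}\cdot f_2\bigr)$; the inner exponent still depends on $n$, so the error term is not of the single-transformation form $\sum f_n(y)\,g(T^n x)$ with a fixed $g$. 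The single-frequency structure only appears \emph{after} the van der Corput differencing and composing by $T^{-c_1 n}$, which is exactly how the paper reaches the spectral estimate.
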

Note that the existence of the $L^2$-limit of \eqref{avgs} was established in
\cite{Conze_Lesigne}, and pointwise convergence in the case where $T_1$ and $T_2$
are powers of the same transformation was proved in
\cite{Bourgain_double_recurrence_ae_conv:1990}. As an immediate application of the
multiple recurrence theorem together with Furstenberg’s correspondence principle
\cite{Furstenberg_ergodic_szemeredi:1977}, we obtain the following combinatorial
consequence.

\begin{corollary}
Let $a\in(0,1/2)$. For $\lambda$-almost every $y\in[0,1]$, the following holds:
for every set $A\subseteq\mathbb{N}$ with
\[
\overline{d}(A)
:= \limsup_{N\to\infty} \frac{|A\cap\{1,\dots,N\}|}{N} > 0,
\]
there exist $m,n\in\mathbb{N}$ such that
\[
\{m,m+n,m+2n\}\subseteq A
\quad\text{and}\quad
0 < 2^n y \!\!\mod 1 < n^{-a}.\label{Cor}
\]
\end{corollary}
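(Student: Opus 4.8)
The plan is to derive the corollary from Theorem~\ref{thmB} (in the case $r=2$), applied with $T_1=T$ and $T_2=T^2$, together with Furstenberg's correspondence principle and Furstenberg's multiple recurrence theorem; only the $L^2$ part of Theorem~\ref{thmB} is needed. First I would fix $y\in[0,1]$ lying in the conull set on which the conclusion of Theorem~\ref{thmB} holds and for which the target set
$R_y:=\{\nu\in\mathbb N:\ 0<2^{\nu} y\bmod 1<\nu^{-a}\}$ is infinite, so that its increasing enumeration $(a_j(y))_{j\in\mathbb N}$ is defined; both properties hold for $\lambda$-a.e.\ $y$, hence so does their conjunction.

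Given $A\subseteq\mathbb N$ with $\overline d(A)>0$, Furstenberg's correspondence principle provides a measure preserving system $(X,\mathcal X,\mu,T)$ and a set $B\in\mathcal X$ with $\mu(B)=\overline d(A)>0$ such that
\[
\overline d\bigl(A\cap(A-n)\cap(A-2n)\bigr)\ \ge\ \mu\bigl(B\cap T^{-n}B\cap T^{-2n}B\bigr)\qquad\text{for all }n\in\mathbb N.
\]
Hence it suffices to produce a single index $j$ with $\mu\bigl(B\cap T^{-a_j(y)}B\cap T^{-2a_j(y)}B\bigr)>0$: writing $n:=a_j(y)\in R_y$, any $m$ in the then-nonempty set $A\cap(A-n)\cap(A-2n)$ yields $\{m,m+n,m+2n\}\subseteq A$, while $n\in R_y$ is precisely the condition $0<2^n y\bmod 1<n^{-a}$.

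To locate such a $j$, apply Theorem~\ref{thmB} with $f_1=f_2=\one_B$, $T_1=T$, $T_2=T^2$, and pair each of the two $L^2$-convergent families of averages with $\one_B\in L^2(\mu)$; since $L^2$-convergence passes to inner products and Theorem~\ref{thmB} asserts that the two limits agree, one obtains
\[
\lim_{N\to\infty}\frac1N\sum_{j=1}^N \mu\bigl(B\cap T^{-a_j(y)}B\cap T^{-2a_j(y)}B\bigr)\ =\ \lim_{N\to\infty}\frac1N\sum_{n=1}^N \mu\bigl(B\cap T^{-n}B\cap T^{-2n}B\bigr),
\]
the right-hand limit existing by Conze--Lesigne \cite{Conze_Lesigne}. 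By Furstenberg's multiple recurrence theorem \cite{Furstenberg_ergodic_szemeredi:1977} this right-hand side is \emph{strictly} positive (because $\mu(B)>0$), hence so are the Cesàro averages on the left; as every summand lies in $[0,1]$, the set of indices $j$ with $\mu(B\cap T^{-a_j(y)}B\cap T^{-2a_j(y)}B)>0$ then has positive lower density, in particular is nonempty, which is all that is required.

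This is essentially a formal consequence of Theorem~\ref{thmB}; the only ingredient that is more than bookkeeping is the strict positivity of $\lim_N \frac1N\sum_{n\le N}\mu(B\cap T^{-n}B\cap T^{-2n}B)$, which is Furstenberg's theorem, and this is precisely the reason one transfers that quantity rather than merely the existence of the limit. I do not expect any genuine obstacle beyond verifying that the generic-$y$ set coming from Theorem~\ref{thmB} and the set of $y$ for which $R_y$ is infinite are both conull.
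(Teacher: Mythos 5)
Your proof is correct and follows exactly the route the paper indicates: apply Theorem~\ref{thmB} with $T_1=T$, $T_2=T^2$, $f_1=f_2=\one_B$, pass to inner products against $\one_B$, invoke Furstenberg's correspondence principle, and use the strict positivity from Furstenberg's multiple recurrence theorem. The paper offers no details beyond citing multiple recurrence and the correspondence principle, and your write-up supplies precisely those details faithfully (including the small but necessary observation that $y$ must also lie in the conull set where the return-time sequence is infinite).
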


A natural question is whether the range of the parameter in Theorem~\ref{thmB}, and consequently in Corollary \ref{Cor}, can be extended. 
\begin{question}
    Does Theorem \ref{thmB} hold for any $a\in (0,1)?$
\end{question}

Other type of multiple averages studied by Frantzikinakis, Lesigne and Wierdl are the so-called \emph{semi-random ergodic averages}.

\begin{theorem}[\cite{FLW}] Let $(\Omega,\mathcal{F},\mathbb{P})$ be a probability space, let $(X_n)_{n\in\N}$ be a sequence of independent random variables, taking values in $\{0,1\}$, such that $\mathbb{P}(X_n=1)=n^{-a}$ for some $a\in (0,1/14)$, and let $(a_n)_{n\in\N}$ be the random sequence of integers generated by $(X_n)_{n\in\N}$. Then, for $\mathbb{P}$-almost every $\omega\in \Omega$, the following holds: For every probability space $(X,\mathcal{X},\mu)$, every pair of commuting measure-preserving transformations $T_1,T_2:X\to X$, and every $f_1,f_2\in L^{\infty}(\mu)$, 
    $$\lim_{N\to\infty}\dfrac{1}{N}\sum_{n=1}^N f_1(T_1^nx)f_2(T_2^{a_n(\omega)}x)=\mathbb{E}(f_1|\mathcal{I}(T_1))(x)\mathbb{E}(f_2|\mathcal{I}(T_2))(x)$$
    for $\mu$-almost every $x\in X$.
\end{theorem}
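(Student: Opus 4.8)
The plan is to combine a purely dynamical reduction with a probabilistic estimate for the random exponential sums attached to $(a_n(\omega))_{n\in\N}$, the two being linked by van der Corput's inequality.

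\textbf{Step 1 (reduction to a bilinear zero-mean term, then to eigenfunctions).} Write $f_i=\E(f_i\mid\CI(T_i))+f_i^{\perp}$ for $i=1,2$, with $\E(f_i^{\perp}\mid\CI(T_i))=0$, and expand the product. Since $\E(f_1\mid\CI(T_1))$ is $T_1$-invariant and $\E(f_2\mid\CI(T_2))$ is $T_2$-invariant, the term carrying both conditional expectations is constant along the orbit and already equals $\E(f_1\mid\CI(T_1))(x)\,\E(f_2\mid\CI(T_2))(x)$; the term with $\E(f_1\mid\CI(T_1))$ and $f_2^{\perp}$ equals $\E(f_1\mid\CI(T_1))(x)\cdot\frac1N\sum_{n\le N}f_2^{\perp}(T_2^{a_n(\omega)}x)$, which tends to $0$ for $\mu$-a.e.\ $x$ by Bourgain's pointwise random ergodic theorem (Theorem~\ref{random_indep}, valid for every $a\in(0,1)$ applied to $f_2^{\perp}\in L^{\infty}\subset L^2$); symmetrically, the term with $f_1^{\perp}$ and $\E(f_2\mid\CI(T_2))$ tends to $0$ by Birkhoff's theorem. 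It thus remains to prove that for $\P$-a.e.\ $\omega$,
\[
B_N(x,\omega):=\frac1N\sum_{n=1}^{N}f_1^{\perp}(T_1^nx)\,f_2^{\perp}(T_2^{a_n(\omega)}x)\longrightarrow 0\quad\text{for }\mu\text{-a.e.\ }x .
\]
Applying van der Corput's inequality in $L^2(\mu)$ to $n\mapsto f_1^{\perp}(T_1^n\,\cdot)\,f_2^{\perp}(T_2^{a_n(\omega)}\,\cdot)$, together with the (random analogue of the) Conze--Lesigne structure theory for double averages, one may replace $f_1^{\perp}$ by its projection onto the Kronecker factor $\CK(T_1)$ and $f_2^{\perp}$ by its projection onto $\CK(T_2)$, up to errors that vanish in $L^2(\mu)$ for a.e.\ $\omega$; by linearity, approximation, and a bilinear maximal inequality, it then suffices to treat the case where $f_1^{\perp}$ and $f_2^{\perp}$ are eigenfunctions of $T_1$ and $T_2$ with eigenvalues $e^{2\pi i\alpha}$, $e^{2\pi i\beta}$ and $\alpha\notin\Z$ (the eigenvalue $1$ being absorbed into the invariant part).

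\textbf{Step 2 (the eigenfunction case and the random exponential sums).} For such eigenfunctions $f_1^{\perp}(T_1^nx)=e^{2\pi i n\alpha}f_1^{\perp}(x)$ and $f_2^{\perp}(T_2^{a_n(\omega)}x)=e^{2\pi i a_n(\omega)\beta}f_2^{\perp}(x)$, so
\[
B_N(x,\omega)=f_1^{\perp}(x)f_2^{\perp}(x)\cdot S_N(\omega;\alpha,\beta),\qquad S_N(\omega;\alpha,\beta):=\frac1N\sum_{n=1}^{N}e^{2\pi i(n\alpha+a_n(\omega)\beta)} ,
\]
and the whole problem collapses to showing that $S_N(\omega;\alpha,\beta)\to 0$ for $\P$-a.e.\ $\omega$, for \emph{every} $\alpha\notin\Z$ and every $\beta$. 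Fixing $\alpha,\beta$, one computes the second moment $\E_\omega|S_N|^2=\frac1{N^2}\sum_{n,m\le N}e^{2\pi i(n-m)\alpha}\,\E_\omega\!\big[e^{2\pi i(a_n(\omega)-a_m(\omega))\beta}\big]$; for $n\neq m$ the increment $a_n(\omega)-a_m(\omega)$ is a sum of $\asymp|n-m|$ near-independent gaps of typical size $\asymp(\max(n,m))^{a/(1-a)}$, hence is spread over many residues, so $\E_\omega[e^{2\pi i(a_n-a_m)\beta}]$ is small unless $\beta\in\Z$, in which case the residual sum of $e^{2\pi i(n-m)\alpha}$ provides the cancellation since $\alpha\notin\Z$. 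Summing the off-diagonal terms gives $\E_\omega|S_N|^2=O(N^{-\gamma})$ for some $\gamma=\gamma(a)>0$; Chebyshev and Borel--Cantelli yield $S_{N_k}(\omega;\alpha,\beta)\to0$ for a.e.\ $\omega$ along $N_k=\lfloor k^{C}\rfloor$, and a maximal/monotonicity argument both fills the gaps $N_k\le N<N_{k+1}$ and upgrades the conclusion to hold simultaneously in $\alpha,\beta$ (a Wiener--Wintner-type uniformity, needed so that the exceptional $\P$-null set does not depend on the system); handling the $\mu$-a.e.\ $x$ left over from Step~1 again uses a maximal inequality for $B_N$, e.g.\ the Birkhoff maximal function for $T_1$ composed with a maximal estimate for $\frac1N\sum_{n\le N}(\,\cdot\,)\circ T_2^{a_n(\omega)}$.

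\textbf{Main obstacle.} The delicate point is that $f_1^{\perp}(T_1^nx)$ and $f_2^{\perp}(T_2^{a_n(\omega)}x)$ appear multiplied inside a single integral over $X$, while the two commuting transformations are otherwise unrelated; carrying out the structure-theory reduction of Step~1 and, above all, establishing the random exponential-sum estimate of Step~2 with rates and with uniformity in $(\alpha,\beta)$ strong enough to survive both the van der Corput averaging over the shifts $h\le H$ and the Borel--Cantelli summation forces a careful balancing of $H$, of the sparsity exponent $C$, and of the power saving $\gamma$. It is precisely this bookkeeping — in essence, that the random translates $T_2^{a_n(\omega)}$ decorrelate a zero-mean function only at a controlled polynomial rate — that confines the argument to the range $a\in(0,1/14)$.
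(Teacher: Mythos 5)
The approach you propose is genuinely different from the one in Frantzikinakis--Lesigne--Wierdl \cite{FLW} (which the present paper cites for this statement, and whose strategy it later adapts in the proof of Theorem~C): \cite{FLW} first applies a time-change/re-indexing reduction (their Proposition~2.4, quoted here as Proposition~\ref{prop:reduction}) that converts the semi-random average into a zero-mean weighted average $\frac{1}{W_N}\sum_n Z_n\,T_1^{X_1+\cdots+X_n}f_1\cdot T_2^n f_2$, and then controls this directly by two applications of van der Corput and concentration inequalities for random trigonometric polynomials, with no appeal whatsoever to characteristic-factor theory. Your route instead tries to reduce to eigenfunctions of $T_1$ and $T_2$ and then to a scalar random exponential sum $S_N(\omega;\alpha,\beta)$.

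The trouble is that the crucial reduction in your Step~1 is not justified and, as stated, is almost certainly false. You invoke a ``random analogue of the Conze--Lesigne structure theory'' to project $f_1^{\perp}$ and $f_2^{\perp}$ onto their Kronecker factors. But even in the deterministic commuting case $\frac1N\sum T_1^n f_1\cdot T_2^n f_2$, the Conze--Lesigne characteristic factors are \emph{not} the individual Kronecker factors $\CK(T_1)$, $\CK(T_2)$: they involve a joint (two-step nil-type) structure for the $\Z^2$-action, which is precisely why the deterministic limit need not be the product $\E(f_1\mid\CI(T_1))\E(f_2\mid\CI(T_2))$. No ``random analogue'' of Conze--Lesigne that would reduce the present bilinear average to eigenfunctions is available in the literature, and the van der Corput computation does not decouple the two transformations the way you suggest: composing with $T_1^{-n}$ produces the operators $T_1^{-n}T_2^{a_n(\omega)}$ and $T_1^{-n}T_2^{a_{n+h}(\omega)}$ inside a single integral, and these do not factor through $\CK(T_1)\times\CK(T_2)$. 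The fact that the claimed limit in this theorem is a clean product is a genuinely random phenomenon — the sparsity and stochasticity of $a_n(\omega)$ asymptotically decouples $T_1^n$ from $T_2^{a_n(\omega)}$ — and this decoupling has to be established probabilistically (as in the \cite{FLW} argument via concentration of measure), not by replacing the system with its Kronecker factor.

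Even granting the eigenfunction reduction, the claim at the end of Step~2 that a ``maximal/monotonicity argument upgrades the conclusion to hold simultaneously in $\alpha,\beta$ (a Wiener--Wintner-type uniformity)'' is a second substantial gap: you need $S_N(\omega;\alpha,\beta)\to 0$ with a single $\P$-null exceptional set in $\omega$ that works for all $\alpha\notin\Z$ and all $\beta$. For ordinary Birkhoff sums this is the Wiener--Wintner theorem; for sums twisted by the random sparse sequence $a_n(\omega)$ it is a nontrivial uniform statement that would itself require a separate chaining or entropy argument over $(\alpha,\beta)$, and is nowhere established in your sketch. Finally, the heuristic second-moment estimate $\E_\omega\bigl[e^{2\pi i(a_n-a_m)\beta}\bigr]$ ``small unless $\beta\in\Z$'' needs quantitative control with the correct polynomial rate in order to survive the Borel--Cantelli summation along $N_k=\lfloor k^C\rfloor$ and the filling of the intermediate gaps; this is exactly the bookkeeping that the concentration lemmas in the paper (e.g.\ Lemma~\ref{lem:technical} / Corollary~\ref{lem:Bourgain}) are designed to supply, and which your sketch defers entirely. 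In summary: Step~1 rests on a structure theorem that does not exist, and Step~2 rests on an unproven uniformity; the overall architecture would need to be replaced by the re-indexing reduction plus concentration-inequality scheme of \cite{FLW}.
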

This result was extended in \cite{Frantzikinakis_Lesigne_Wierdl_Szemeredi} to $a\in (0,1/2)$ when $T_1=T_2$.

A natural question is whether a similar result can be obtained when the random variables $(X_n)_{n\in\N}$ satisfy a weaker notion of independence. In particular, we focus in the case where the random variables arise as hitting times in rapidly mixing systems, which was the context studied in \cite{Donoso_Maass_Saavedra-Araya_ergodic_return_mixing:2025} for single averages. Before stating our results, we introduce some terminology.

For a function $f\colon [0,1]\subseteq \mathbb{R}\to \mathbb{R}$, recall that 
 $$\|f\|_{\operatorname{BV}}:=\displaystyle \|f\|_{\infty}+\sup_{0=x_0<x_1<\cdots<x_m=1}\sum_{i=0}^{m-1}|f(x_{i+1})-f(x_i)|,$$ where the supremum is taken over all the finite sequences $0=x_0<x_1<\cdots<x_m=1$ of $[0,1]$. We say that $f$ has bounded variation if $\|f\|_{BV}<\infty$. 
\begin{definition}
Let $Y:=([0,1],\mathcal{B}([0,1]),\nu,S)$ be a m.p.s. We say that the system $Y$ has a \emph{decay of correlation for functions of bounded variation  against $L^1(\nu)$ with rate $\rho\colon\mathbb{N}\to \mathbb{R}_{+}$} if there exists $C>0$ such that
    \[\left|\operatorname{Cov}( f,g\circ S^m)\right|=\left|\int_{[0,1]} f\cdot g\circ S^m d\nu-\int_{[0,1]}f d\nu \int_{[0,1]} g d\nu \right|\leq C\rho(m)\|f\|_{\operatorname{BV}}\|g\|_{L^{1}}\]
    for any $m\in \mathbb{N}$, $f$ with bounded variation and $g\in L^1(\nu)$. \label{def:correlations}
\end{definition}
If the system $([0,1],\mathcal{B}([0,1]),\nu,S)$ has a fast decay of correlation, it was shown in \cite[Theorem A]{Donoso_Maass_Saavedra-Araya_ergodic_return_mixing:2025} that, for $\nu$-almost every $y\in [0,1]$, the set $$\{n\in \N:\ S^n y \in (0,n^{-a})\}$$ is pointwise $L^2$-good. In this direction, we obtain the following semi-random ergodic theorem.

\begin{theoremL}
    \label{thmC}
    Let $([0,1],\mathcal{B}([0,1]),\nu,S)$ be a measure-preserving system with decay of correlation for functions with bounded variation against $L^1$ with rate $\rho$. Let $I_n\subseteq [0,1]$ be  a sequence of intervals such that $\nu(I_n)=n^{-a}$ for some $a\in (0,1/14)$, let $\varphi:\mathbb{N}\to \mathbb{N}$ be an increasing function, and suppose that there exists $C>0$ such that \begin{equation}
\rho\Big(\varphi(n+1)-\varphi(n)\Big)\leq \dfrac{C}{n}. \label{thmB_condition}   \end{equation}
    We consider the sequence $(a_n(y))_{n\in\N}$ such that
\[\{n\in\mathbb{N}:\ S^{\varphi(n)}y\in I_n\}=\{a_1(y)<a_2(y)<\cdots\},\]
which is well-defined for $\nu$-almost every $y\in [0,1]$.

Then, for $\nu$-almost every $y\in [0,1]$ the following holds: For every probability space $(X,\mathcal{X},\mu)$, every pair of commuting measure-preserving transformations $T_1,T_2\colon X\to X$, and every $f_1,f_2\in L^{\infty}(\mu)$, 
$$\lim_{N\to\infty}\dfrac{1}{N}\sum_{n=1}^N f_1(T_1^nx)f_2(T_2^{a_n(y)}x)=\mathbb{E}(f_1|\mathcal{I}(T_1))(x)\mathbb{E}(f_2|\mathcal{I}(T_2))(x)$$
for $\mu$-almost every $x\in X$.\label{Semi_random}
\end{theoremL}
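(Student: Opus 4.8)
The plan is to follow the strategy pioneered by Frantzikinakis, Lesigne and Wierdl for semi-random averages, replacing their use of independence with the quantitative decay-of-correlation hypothesis, and to reduce everything to a deterministic pointwise convergence statement via a variance estimate. The first step is to linearize: set $X_n(y) = \setone_{I_n}(S^{\varphi(n)}y)$, so that $\sum_{n=1}^N X_n(y) \sim \sum_{n=1}^N n^{-a} =: b(N) \asymp N^{1-a}$ for $\nu$-a.e.\ $y$ (this follows from a second-moment / Borel--Cantelli argument using \eqref{thmB_condition}, exactly as the single-average case in \cite{Donoso_Maass_Saavedra-Araya_ergodic_return_mixing:2025}). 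Via the standard change of variables $n \leftrightarrow a_n(y)$, the average $\frac1N\sum_{n=1}^N f_1(T_1^n x) f_2(T_2^{a_n(y)}x)$ is, up to the negligible discrepancy between $N$ and $\sum X_n(y)$, comparable to
\[
\frac{1}{b(M)}\sum_{n=1}^M X_n(y)\, f_1\big(T_1^{c_n(y)}x\big) f_2\big(T_2^{n}x\big),
\]
where $c_n(y)$ counts $\sum_{k\le n}X_k(y)$; here one uses that $f_1$ is bounded and that $c_n(y)/n^{1-a}\to$ const, so $f_1(T_1^{c_n(y)}x)$ can be handled by an Abel-summation / van der Corput reduction against the $T_1$-orbit. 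The upshot of this bookkeeping is that it suffices to prove, for $\nu$-a.e.\ $y$, that for all bounded $f_1,f_2$ and all $x$ in a full-measure set,
\[
\lim_{M\to\infty}\frac{1}{M^{1-a}}\sum_{n=1}^M \big(X_n(y)-n^{-a}\big)\, g_n(x) = 0,
\]
where $g_n(x) = f_1(T_1^{n}x)f_2(T_2^{a}x)$-type bounded quantities arising from the Birkhoff averages of the commuting system, and then to invoke the known pointwise result for $\frac1N\sum f_1(T_1^nx)f_2(T_2^nx) \to \E(f_1|\CI(T_1))\E(f_2|\CI(T_2))$ (which, for commuting $T_1,T_2$, is the semi-random/FLW deterministic input).

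The heart of the matter is the variance estimate for the weighted sums $W_N(y) := \sum_{n=1}^N (X_n(y)-n^{-a}) w_n$ with $|w_n|\le 1$ arbitrary deterministic weights. Expanding $\int |W_N(y)|^2\,d\nu(y)$ produces the diagonal terms $\sum_n \Var(X_n) \le \sum_n n^{-a} \asymp N^{1-a}$ and the off-diagonal terms $\sum_{m\ne n} w_m \overline{w_n}\,\Cov(\setone_{I_m}\circ S^{\varphi(m)}, \setone_{I_n}\circ S^{\varphi(n)})$. For $m<n$, writing the covariance as $\Cov(\setone_{I_m}, \setone_{I_n}\circ S^{\varphi(n)-\varphi(m)})$ after applying $S^{-\varphi(m)}$ and using $S$-invariance of $\nu$, Definition \ref{def:correlations} gives a bound $C\rho(\varphi(n)-\varphi(m))\|\setone_{I_m}\|_{\operatorname{BV}}\|\setone_{I_n}\|_{L^1} \le C' \rho(\varphi(n)-\varphi(m)) n^{-a}$, since an indicator of an interval has $\operatorname{BV}$ norm $\le 3$. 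Hypothesis \eqref{thmB_condition} is monotone-telescoped to $\rho(\varphi(n)-\varphi(m)) \le C/(n-m)$ for $m<n$, so the off-diagonal contribution is at most $C''\sum_{m<n} \frac{n^{-a}}{n-m} \ll N^{1-a}\log N$. Hence $\int |W_N(y)|^2 d\nu \ll N^{1-a}\log N$, uniformly in the weights. Passing to a lacunary subsequence $N_j = \lfloor \theta^j\rfloor$ and using Borel--Cantelli gives $W_{N_j}(y) = o(N_j^{1-a}\cdot(\log N_j)^{1/2}\cdot j^{\varepsilon}) = o(N_j)$, $\nu$-a.e.; a maximal-inequality / monotonicity argument between consecutive $N_j$ (the weights have modulus $\le 1$ so $|W_N - W_{N_j}| \le |N - N_j| \ll N_j(\theta-1)$) fills the gaps, after letting $\theta\downarrow 1$. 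This is the standard Bourgain-type transfer and is where the restriction $a\in(0,1/14)$ enters — as in \cite{FLW}, the semi-random double average needs enough room in the exponent for the van der Corput / PET-type bilinear estimates controlling $g_n$ to beat the $N^{1-a}$ normalization, so the admissible range is inherited directly from the independent case rather than improved.

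Putting the pieces together: fix a countable dense family of bounded $f_1,f_2$ (e.g.\ from a separable dense subalgebra of $L^2(\mu)$, using a standard reduction to a fixed system is \emph{not} available here since the system is arbitrary — instead one argues as in \cite{FLW} that the a.e.\ statement in $y$ can be made uniform by a soft functional-analytic argument, or one proves the $\nu$-a.e.\ set of good $y$ works for all systems simultaneously by reducing to the universal bilinear weighted maximal inequality). For each good $y$, the variance estimate shows the weighted ergodic average agrees asymptotically with the unweighted commuting double average $\frac1N\sum f_1(T_1^nx)f_2(T_2^{?}x)$, and the known pointwise theorem (FLW for $T_1=T_2$, and the $L^2$-plus-Bourgain argument in general) identifies the limit as $\E(f_1|\CI(T_1))(x)\E(f_2|\CI(T_2))(x)$. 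The main obstacle, and the step demanding the most care, is the bilinear weighted pointwise estimate controlling the "error term" after removing the mean of $X_n$: one must verify that the random weights $X_n(y)-n^{-a}$ can be inserted into the FLW bilinear machinery, which requires the variance bound above together with a uniformity over the auxiliary parameters in the van der Corput iteration — this is exactly the place where decay of correlations must substitute cleanly for independence, and checking that $\rho$-mixing suffices for each step of their argument (not merely for the first moment) is the crux.
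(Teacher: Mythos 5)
Your proposal has a genuine gap at its core: you reduce the problem to bounding sums $\sum_{n\le N}(X_n(y)-n^{-a})\,w_n$ for \emph{deterministic} weights $|w_n|\le 1$, but after the change of variables the semi-random average becomes (up to normalization) $\sum_{n\le N} X_n(y)\, f_1\bigl(T_1^{X_1(y)+\cdots+X_n(y)}x\bigr) f_2(T_2^n x)$: the factor multiplying $X_n(y)-\sigma_n$ depends on $y$ through the random exponent $X_1(y)+\cdots+X_n(y)$ and is correlated with $X_n$ itself, so it is not a fixed weight and your second-moment bound $\int |\sum_n Z_n w_n|^2 d\nu \ll N^{1-a}\log N$ does not apply to it. Moreover, even if one could freeze the weights, the exceptional null set of $y$ would depend on the weights, i.e.\ on the system $(X,\mu,T_1,T_2,f_1,f_2,x)$, an uncountable family; there is no ``soft functional-analytic argument'' making the a.e.\ statement in $y$ uniform over all systems, and you acknowledge this without resolving it. The paper's proof handles exactly these two points: it invokes Proposition~2.4 of \cite{FLW} to reduce to summability of $\bigl\|W_{\lfloor\gamma^k\rfloor}^{-1}\sum_n Z_n(y)\,T_1^{X_1(y)+\cdots+X_n(y)}f_1\cdot T_2^n f_2\bigr\|_{L^2(\mu)}^2$, then runs a two-step van der Corput argument in which the random exponents are decomposed over the possible values of the partial sums (via $T^{X_{n+1}+\cdots+X_{n+k}}h=\sum_j \mathbbm{1}_{\{\cdot=j\}}T^j h$), so that after Cauchy--Schwarz the upper bound depends only on the random variables $Z_n$ and not on the m.p.s.; the needed moment estimates (e.g.\ $\sum_{m\le N^b}\sum_{n\le N}X_{n+m}X_n\ll N^{b+1-2a}$ and fourth-moment covariance bounds) are where decay of correlations substitutes for independence, and the exponent bookkeeping $d_2=1/2-7a-3\varepsilon>0$ is where $a<1/14$ actually enters --- it is not simply ``inherited'' from the independent case.

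A secondary error: you propose to identify the limit by invoking pointwise convergence of $\frac1N\sum f_1(T_1^n x)f_2(T_2^n x)$ for general commuting $T_1,T_2$; this is not a known theorem (only $L^2$ convergence via Conze--Lesigne, and pointwise when $T_1,T_2$ are powers of one map, are known), and it is not the relevant input anyway: in Theorem~\ref{thmC} the limit is $\mathbb{E}(f_1|\mathcal{I}(T_1))\,\mathbb{E}(f_2|\mathcal{I}(T_2))$, and its identification comes from the reduction in \cite{FLW}, whose hypotheses the paper verifies using only the law-of-large-numbers statement for $\sum_n X_n$. The parts of your outline that are correct (the LLN for $X_n$, the covariance bound from Definition~\ref{def:correlations}, the lacunary/Borel--Cantelli transfer) are the routine ones; the crux --- inserting the $y$-dependent exponents into the bilinear van der Corput scheme with bounds uniform over all systems --- is precisely what you defer, so the proposal does not constitute a proof.
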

\begin{remark}
The condition that each \(I_n\) is an interval can be relaxed to the existence of \(\ell\in\mathbb{N}\) such that each \(I_n\) is a union of at most \(\ell\) intervals. Moreover, with some technical work, condition~\eqref{thmB_condition} can be improved by replacing \(C/n\) with \(C/n^{1-4a}\). However, we will omit the details for the sake of simplicity.
\end{remark}

Note that the sequence $(a_n(y))_{n\in\N}$ in Theorem \ref{Semi_random} is equivalent to the random sequence generated by the non-independent random variables $$X_n(y):=\mathbbm{1}_{I_n}(S^{\varphi(n)}y).$$

Recalling that the doubling map defines a system with exponentially fast decay of correlation for functions with bounded variation against $L^1$, we obtain the following consequence. 
\begin{corollary}
    Let $a\in (0,1/14)$ and $c>1$. For $y\in [0,1],$ we consider the sequence
    $$\Big\{a_1(y)<a_2(y)<\cdots\Big\}=\Big\{n\in\N:\ 0<2^{\lfloor n^c \rfloor}y\mod 1<n^{-a}\Big\}.$$
    Then, for $\lambda$-almost every $y$, the following holds: For every probability space $(X,\mathcal{X},\mu)$, every pair of commuting measure-preserving transformations $T_1,T_2:X\to X$, and every $f_1,f_2\in L^{\infty}(\mu)$,
$$\lim_{N\to\infty}\dfrac{1}{N}\sum_{n=1}^N f_1(T_1^nx)f_2(T_2^{a_n(y)}x)=\mathbb{E}(f_1|\mathcal{I}(T_1))(x)\mathbb{E}(f_2|\mathcal{I}(T_2))(x)$$\label{CorA}
for $\mu$-almost every $x\in X$.
\end{corollary}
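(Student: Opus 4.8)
The plan is to obtain this statement as a direct application of Theorem~\ref{thmC} to the doubling map $S\colon[0,1]\to[0,1]$, $S(x)=2x\bmod 1$, which preserves Lebesgue measure $\lambda$, taking $I_n=(0,n^{-a})$ and $\varphi(n)=\lfloor n^{c}\rfloor$. Accordingly, the work consists entirely of checking the three hypotheses of that theorem for this choice and then transcribing its conclusion, noting that $\lambda$ is the relevant $S$-invariant measure so that ``$\nu$-a.e.\ $y$'' becomes ``$\lambda$-a.e.\ $y$''.

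First I would record the decay of correlation for $S$. This is classical spectral theory of the transfer operator: the Perron--Frobenius operator of the doubling map satisfies a Lasota--Yorke inequality on the space of functions of bounded variation, hence has a spectral gap there, which yields constants $C>0$ and $\theta\in(0,1)$ with
\[
\bigl|\operatorname{Cov}(f,g\circ S^{m})\bigr|\le C\,\theta^{m}\,\|f\|_{\operatorname{BV}}\,\|g\|_{L^{1}(\lambda)}
\]
for every $m\in\mathbb{N}$, every $f$ of bounded variation and every $g\in L^{1}(\lambda)$. Thus $S$ has decay of correlation for functions of bounded variation against $L^{1}$ with the exponential rate $\rho(m)=C\theta^{m}$, in the sense of Definition~\ref{def:correlations}.

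Next I would verify the structural conditions. Each $I_n=(0,n^{-a})$ is an interval with $\lambda(I_n)=n^{-a}$ and $a\in(0,1/14)$, as required. For $c>1$ the mean value theorem gives $(n+1)^{c}-n^{c}\ge c\,n^{c-1}\ge c>1$, so $\varphi(n+1)-\varphi(n)\ge (n+1)^{c}-n^{c}-1\ge c\,n^{c-1}-1$; in particular this difference is a positive integer for every $n$ (hence $\varphi$ is increasing) and it tends to infinity. Consequently
\[
\rho\bigl(\varphi(n+1)-\varphi(n)\bigr)\le C\,\theta^{\,c\,n^{c-1}-1},
\]
which tends to $0$ faster than any power $n^{-k}$; since the sequence $n\cdot C\theta^{\,c\,n^{c-1}-1}$ is therefore bounded, there is a constant $C'>0$ with $\rho(\varphi(n+1)-\varphi(n))\le C'/n$ for all $n\in\mathbb{N}$, i.e.\ condition~\eqref{thmB_condition} holds. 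With all hypotheses in place, Theorem~\ref{thmC} applies and yields the asserted pointwise identity.

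There is essentially no genuine obstacle here: the statement is a specialization of Theorem~\ref{thmC}. The only points deserving a moment's care are (i) quoting the exponential decay of correlation for the doubling map on $\operatorname{BV}$ against $L^{1}$ in exactly the form demanded by Definition~\ref{def:correlations}, and (ii) confirming that the super-exponential decay of $\rho$ evaluated along the sparse gap sequence $\varphi(n+1)-\varphi(n)$ dominates $C'/n$ \emph{uniformly} in $n$, including small $n$, which is handled by the trivial lower bound $\varphi(n+1)-\varphi(n)\ge 1$.
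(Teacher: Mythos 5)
Your proof is correct and follows exactly the route the paper intends: the corollary is stated as a direct specialization of Theorem~\ref{thmC} to the doubling map with $I_n=(0,n^{-a})$ and $\varphi(n)=\lfloor n^c\rfloor$, and the paper's only remark in lieu of a proof is that the doubling map has exponential decay of correlations for BV against $L^1$. Your verification of the hypotheses (monotonicity of $\varphi$, the gap lower bound $\varphi(n+1)-\varphi(n)\geq 1$ with super-polynomial growth for large $n$, and hence $\rho(\varphi(n+1)-\varphi(n))\ll 1/n$) simply supplies the routine details the authors left implicit.
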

Further examples of systems exhibiting a exponentially fast decay of correlation (for a measure absolutely continuous with respect to $\lambda$) includes the Gauss map with its invariant measure\footnote{The measure $\nu \ll \lambda$ defined by $d\nu=\dfrac{1}{\ln(2)\cdot(1+x)}d\lambda$ is invariant for the Gauss map $G$.}, and piecewise expanding maps \cite{Liverani}.

Despite the fact that we are unable to cover the case \(c=1\) with the current method of proof, we believe that a semi-random analogue of Theorem~\ref{thmA} and Theorem~\ref{thmB} should hold. Hence, we pose the following question.
\begin{question}
Does Corollary~\ref{CorA} hold when \(c=1\) and \(a\in(0,1/2)\)?
\end{question}

In the context of random ergodic averages, the behavior of averages of the form
\begin{equation}
    \frac{1}{N}\sum_{n=1}^N f_1\circ T_1^{a_n(\omega)} \cdot f_2\circ T_2^{b_n(\omega)},
\end{equation}
where \((a_n)_n\) and \((b_n)_n\) are independently generated random sequences, is largely unknown, and even simple cases are not understood, both in the $L^2$ and pointwise setting (see Problem~33 in \cite{Frantzikinakis_open_problems:2016}). Although this problem remains out of reach, we tackle a version in which the sequence \(b_n\) is a power of \(a_n\).

\begin{theoremL}\label{thmD} Let $(\Omega,\mathcal{F},\mathbb{P})$ be a probability space, let $(X_n)_{n\in\N}$ be a sequence of independent random variables, taking values in $\{0,1\}$, such that $\mathbb{P}(X_n=1)=n^{-a}$ for some $a\in (0,1/2)$, and let $(a_n(\omega))_{n\in\N}$ be the random sequence of integers generated by $(X_n)_{n\in\N}$ as defined in \eqref{eq:random_sequence}. Let $b>0$ such that $b+2a<1/2$. 
Then, for $\mathbb{P}$-almost every $\omega \in \Omega$, the following holds: For every measure preserving system $(X,\mathcal{X},\mu,T)$, and every $f,g\in L^{\infty}(\mu)$, the averages
\begin{equation}
\frac{1}{N}\sum_{n=1}^N f(T^{a_n(\omega)}x)g(T^{\lfloor a_n(\omega)^{1+b}\rfloor}x) \text{ converge in $L^2$-norm }.\label{thmDequation}
\end{equation}
\end{theoremL}

\subsection*{Organization of the paper}
The paper is organized as follows. In \cref{sec:Preliminaries} we gather the necessary background and technical tools, including the lacunary trick and concentration inequalities for random variables. \cref{sec:ProofsA-B} is dedicated to the proofs of Theorem \ref{thmA} and Theorem \ref{thmB}; we begin by outlining the heuristic strategy and constructing the necessary approximations of return times by independent random variables. In \cref{sec:ProofC}, we establish the semi-random ergodic theorem for systems with decay of correlations, proving Theorem \ref{thmC}. Finally, \cref{sec:ProofD} is devoted to the proof of Theorem \ref{thmD}.

\section{Preliminaries} \label{sec:Preliminaries}

The proof of our results follows the strategy of Frantzikinakis, Wierdl, and Lesigne~\cite{FLW,Frantzikinakis_Lesigne_Wierdl_Szemeredi} for random sequences constructed from independent random variables, although in our setting we must adapt the approach to account for the lack of independence.

A key ingredient in the proof is the so-called \emph{lacunary trick}. This widely used strategy relies on the fact that convergence of averages along suitably chosen lacunary subsequences is sufficient to establish convergence of the corresponding ergodic averages. This is formalized in the following result, whose proof can be found in
\cite[Corollary A.2]{FLW}.

\begin{lemma} Let $(\Omega,\mathcal{F},\mathbb{P})$ be a probability space. For every $n\in \N$, consider a non-negative measurable function  $f_n\colon\Omega\to \mathbb{R}$, and let $(W_n)_{n\in \mathbb{N}}$ be an increasing sequence of positive numbers such that
\begin{equation}
    \lim_{\gamma\to 1^{+}}\limsup_{n\to \infty} \dfrac{W_{\lfloor\gamma^{n+1}\rfloor}}{W_{\lfloor\gamma^{n}\rfloor}}=1.\label{lacunary_trick_eqn}
\end{equation}
For $N\in \N$ and $\omega\in \Omega$, define
\[A_N(\omega):=\dfrac{1}{W_N}\sum_{n=1}^{N} f_n(\omega).\]
Assume that there exist a function $f\colon \Omega\to \mathbb{R}$ and a real-valued sequence $(\gamma_k)_{k\in \mathbb{N}}$ such that $1<\gamma_k<+\infty$, $\gamma_k\to 1$ as $k\to \infty$, and for all $k\in \mathbb{N}$   
\[\lim_{N\to \infty}A_{\lfloor\gamma_k^{N}\rfloor}(\omega)=f(\omega)\]
for $\mathbb{P}$-almost every $\omega\in\Omega$.
Then,
\[\lim_{N\to \infty} A_N(\omega)=f(\omega) \]
for $\mathbb{P}$-almost every $\omega\in\Omega$.\label{lacunary_trick}
\end{lemma}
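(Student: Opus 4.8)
The plan is to exploit the monotonicity of the partial sums $S_N(\omega):=\sum_{n=1}^{N}f_n(\omega)$ --- which is available \emph{precisely} because each $f_n$ is non-negative --- together with the growth control \eqref{lacunary_trick_eqn} on the weights $(W_N)$, in order to sandwich $A_N(\omega)$ between two values of $A$ taken along the lacunary subsequences $\bigl(\lfloor\gamma_k^{m}\rfloor\bigr)_{m}$ whose limits we already control.

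First I would fix a full-measure set $\Omega'\subseteq\Omega$ on which $\lim_{N\to\infty}A_{\lfloor\gamma_k^{N}\rfloor}(\omega)=f(\omega)$ holds for \emph{every} $k\in\mathbb{N}$ simultaneously; this is legitimate as it is a countable intersection of full-measure sets. For $\omega\in\Omega'$ we have $A_N(\omega)\ge 0$ for all $N$, hence $f(\omega)\ge 0$, and $S_N(\omega)$ is non-decreasing in $N$. Now fix $k$ and write $\gamma=\gamma_k$. Since $\gamma>1$ the blocks $\bigl[\lfloor\gamma^{m}\rfloor,\lfloor\gamma^{m+1}\rfloor\bigr)$ are nested end-to-end and exhaust all sufficiently large integers; given large $N$, let $m=m(N)$ be the unique index with $\lfloor\gamma^{m}\rfloor\le N<\lfloor\gamma^{m+1}\rfloor$, and note $m(N)\to\infty$ as $N\to\infty$. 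Monotonicity of $S_{\cdot}(\omega)$ and of $W$ then gives
\[
A_{\lfloor\gamma^{m}\rfloor}(\omega)\,\frac{W_{\lfloor\gamma^{m}\rfloor}}{W_{\lfloor\gamma^{m+1}\rfloor}}
=\frac{S_{\lfloor\gamma^{m}\rfloor}(\omega)}{W_{\lfloor\gamma^{m+1}\rfloor}}
\le A_N(\omega)
\le \frac{S_{\lfloor\gamma^{m+1}\rfloor}(\omega)}{W_{\lfloor\gamma^{m}\rfloor}}
=A_{\lfloor\gamma^{m+1}\rfloor}(\omega)\,\frac{W_{\lfloor\gamma^{m+1}\rfloor}}{W_{\lfloor\gamma^{m}\rfloor}}.
\]
Letting $N\to\infty$ (so $m\to\infty$), using $A_{\lfloor\gamma^{m}\rfloor}(\omega),A_{\lfloor\gamma^{m+1}\rfloor}(\omega)\to f(\omega)$, and writing $R(\gamma):=\limsup_{m\to\infty}W_{\lfloor\gamma^{m+1}\rfloor}/W_{\lfloor\gamma^{m}\rfloor}\in[1,\infty]$, one obtains, for every $k$ with $R(\gamma_k)<\infty$,
\[
\frac{f(\omega)}{R(\gamma_k)}\le \liminf_{N\to\infty}A_N(\omega)\le\limsup_{N\to\infty}A_N(\omega)\le f(\omega)\,R(\gamma_k).
\]
Finally I would let $k\to\infty$: since $\gamma_k\to1^{+}$, hypothesis \eqref{lacunary_trick_eqn} forces $R(\gamma_k)\to1$ (in particular $R(\gamma_k)<\infty$ for all large $k$), so both outer bounds collapse to $f(\omega)$, giving $\lim_{N\to\infty}A_N(\omega)=f(\omega)$ for every $\omega\in\Omega'$.

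There is no deep obstacle here --- the argument is a Cauchy-condensation-type sandwich. The only points requiring a little care are (i) checking that, for fixed $\gamma>1$, the blocks $\bigl[\lfloor\gamma^{m}\rfloor,\lfloor\gamma^{m+1}\rfloor\bigr)$ really do cover all large integers (some may be empty, which is harmless) and that the associated index $m(N)\to\infty$, and (ii) ensuring the weight ratios $W_{\lfloor\gamma^{m}\rfloor}/W_{\lfloor\gamma^{m+1}\rfloor}$ stay bounded away from $0$ as $m\to\infty$ --- which is exactly, and only, what \eqref{lacunary_trick_eqn} provides. A minor bookkeeping remark is that when $f(\omega)=0$ the displayed bounds are read with the convention that $0\cdot c=0$ for the bounded factors $c\in(0,1]$, so the conclusion is unaffected.
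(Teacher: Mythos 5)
Your proof is correct. The paper does not give its own proof of this lemma --- it cites it to \cite[Corollary A.2]{FLW} --- and the sandwich argument you give (use monotonicity of $S_N=\sum_{n\le N}f_n$ and of $W$ to pin $A_N$ between $A_{\lfloor\gamma^m\rfloor}\cdot W_{\lfloor\gamma^m\rfloor}/W_{\lfloor\gamma^{m+1}\rfloor}$ and $A_{\lfloor\gamma^{m+1}\rfloor}\cdot W_{\lfloor\gamma^{m+1}\rfloor}/W_{\lfloor\gamma^m\rfloor}$, pass to $\liminf/\limsup$, then let $\gamma_k\to1^+$ so the ratio $R(\gamma_k)\to1$) is exactly the standard argument used in that reference.
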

If we consider a random sequence $(a_n)_{n\in\mathbb{N}}$ generated by $\{0,1\}$-valued random
variables $(X_n)_{n\in\mathbb{N}}$ (see Definition~\ref{def:random_sequence}), then the averages
\begin{equation}
   \frac{1}{N}\sum_{n=1}^N f\bigl(T^{a_n(\omega)}x\bigr)
\quad\text{and}\quad
\frac{1}{\sum_{n=1}^N X_n(\omega)}\sum_{n=1}^N X_n(\omega)\,f(T^n x)\label{equivalence} 
\end{equation}

have the same limiting behavior. A similar equivalence holds for the other types of averages considered in this paper. 

We use the following lemma, proved in \cite[Lemma~2.3]{Donoso_Maass_Saavedra-Araya_ergodic_return_mixing:2025},
to understand the behavior of $\sum_{n=1}^N X_n(\omega)$ in our non-independent setting and, in particular, to show that the
sequence $(a_n(\omega))_{n\in\mathbb{N}}$ is well defined almost surely.

\begin{lemma}
Let $(\Omega,\mathcal{F},\mathbb{P})$ be a probability space and $(X_n)_{n\in \N}$ be a sequence of uniformly bounded and non-negative random variables defined on it. Suppose that there exist $a\in (0,1)$ and $c>0$ such that $\mathbb{E}(X_n)=cn^{-a}$ and $\varepsilon>0$ such that
    \begin{equation*}
    \sum_{n=1}^N\sum_{m=n+1}^N\Cov(X_n,X_m)\ll N^{2-2a-\varepsilon}.
    \end{equation*}
    Then,
    \[\lim_{N\to \infty}\dfrac{1}{W_N}\sum_{n=1}^NX_n(\omega)=1\]
    for almost every $\omega\in \Omega$, where $W_N:=\sum_{n=1}^N \mathbb{E}(X_n)$.\label{LLN}
\end{lemma}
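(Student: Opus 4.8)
The plan is to prove the lemma by a classical second‑moment argument: bound $\Var(S_N)$, apply Chebyshev, upgrade to almost sure convergence along a polynomial subsequence via Borel--Cantelli, and then fill the gaps using monotonicity. Throughout write $S_N(\omega):=\sum_{n=1}^N X_n(\omega)$, so that $\mathbb{E}(S_N)=W_N$ and the goal is $S_N/W_N\to 1$ almost surely.

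\medskip

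\textbf{Step 1 (asymptotics of the normalization).} Since $\mathbb{E}(X_n)=cn^{-a}$ with $a\in(0,1)$, one has $W_N=c\sum_{n=1}^N n^{-a}\sim \tfrac{c}{1-a}\,N^{1-a}$; in particular $W_N\asymp N^{1-a}$ and $W_N\to\infty$. I will use this both to convert the covariance hypothesis into a bound relative to $W_N^2$ and, later, to control ratios $W_{N_{k+1}}/W_{N_k}$.

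\textbf{Step 2 (variance estimate).} Decompose
\[
\Var(S_N)=\sum_{n=1}^N\Var(X_n)+2\sum_{n=1}^N\sum_{m=n+1}^N\Cov(X_n,X_m).
\]
For the diagonal term, uniform boundedness $0\le X_n\le M$ gives $X_n^2\le M X_n$, hence $\Var(X_n)\le\mathbb{E}(X_n^2)\le M\,\mathbb{E}(X_n)=Mcn^{-a}$, so $\sum_{n=1}^N\Var(X_n)\ll N^{1-a}$. The off‑diagonal term is $\ll N^{2-2a-\varepsilon}$ by hypothesis. Setting $\delta:=\min\{1-a,\varepsilon\}>0$ and using $W_N^2\asymp N^{2-2a}$,
\[
\frac{\Var(S_N)}{W_N^2}\ll\frac{N^{1-a}+N^{2-2a-\varepsilon}}{N^{2-2a}}\ll N^{-\delta}.
\]
By Chebyshev's inequality, for every $\eta>0$ there is $C_\eta>0$ with $\mathbb{P}\big(|S_N-W_N|>\eta W_N\big)\le \Var(S_N)/(\eta^2 W_N^2)\le C_\eta N^{-\delta}$.

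\textbf{Step 3 (subsequence argument and interpolation).} Fix an integer $\beta$ with $\beta\delta>1$ and put $N_k:=\lfloor k^{\beta}\rfloor$. Then $\sum_k\mathbb{P}\big(|S_{N_k}-W_{N_k}|>\eta W_{N_k}\big)\ll\sum_k k^{-\beta\delta}<\infty$, so by Borel--Cantelli, for each fixed $\eta$ we have $|S_{N_k}/W_{N_k}-1|\le\eta$ eventually, almost surely; intersecting over $\eta=1/j$, $j\in\mathbb{N}$, yields $S_{N_k}/W_{N_k}\to1$ almost surely. To pass to the full sequence, note that $N\mapsto S_N$ is non‑decreasing (because $X_n\ge0$) and $N\mapsto W_N$ is non‑decreasing, so for $N_k\le N<N_{k+1}$,
\[
\frac{W_{N_k}}{W_{N_{k+1}}}\cdot\frac{S_{N_k}}{W_{N_k}}\ \le\ \frac{S_N}{W_N}\ \le\ \frac{W_{N_{k+1}}}{W_{N_k}}\cdot\frac{S_{N_{k+1}}}{W_{N_{k+1}}}.
\]
Since $N_{k+1}/N_k\to1$ and $W_N\sim\tfrac{c}{1-a}N^{1-a}$ is regularly varying, $W_{N_{k+1}}/W_{N_k}\to1$; hence both outer bounds tend to $1$ almost surely, giving $S_N/W_N\to1$ almost surely, which is the assertion of the lemma.

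\medskip

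The argument is essentially routine (a Gál--Koksma / Cantelli‑type strong law for weakly correlated sequences). The only points needing a little care are the double use of non‑negativity — once to dominate the second moment by the first moment in Step 2, and once to sandwich $S_N$ between $S_{N_k}$ and $S_{N_{k+1}}$ in Step 3 — and choosing $\beta$ large enough in terms of $\delta=\min\{1-a,\varepsilon\}$ so that the Borel--Cantelli series converges while the gap ratios $W_{N_{k+1}}/W_{N_k}$ stay asymptotically trivial; there is no substantive obstacle.
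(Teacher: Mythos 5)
Your proof is correct. Note that this paper does not prove the lemma itself but quotes it from Lemma~2.3 of \cite{Donoso_Maass_Saavedra-Araya_ergodic_return_mixing:2025}, and your argument --- variance bound from uniform boundedness plus the covariance hypothesis, Chebyshev, Borel--Cantelli along $N_k=\lfloor k^{\beta}\rfloor$ with $\beta\delta>1$, and monotone interpolation using $X_n\ge 0$ together with $W_N\sim \tfrac{c}{1-a}N^{1-a}$ --- is exactly the standard second-moment strong-law argument used for results of this type, so it matches the intended proof in approach.
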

Hence, the study of the limiting behavior of~\eqref{equivalence} can be reduced to understanding
\[
\frac{1}{W_N}\sum_{n=1}^N X_n(\omega)\,f(T^n x),
\]
where $W_N := \sum_{n=1}^N \mathbb{E}(X_n)$. Throughout this paper, $W_N$ behaves like $N^{1-a}$ for
some $a\in(0,1)$. Therefore, we may take advantage of Lemma~\ref{lacunary_trick} to reduce the
problem to the study of lacunary subsequences.

Another classical tool we use is the following version of the \emph{van der Corput trick}. 

\begin{lemma}\label{vdc}
Let $V$ be an inner product space, let $N\in\mathbb{N}$, and let $v_1,\dots,v_N\in V$.
Then, for every integer $M$ with $1\leq M\leq N$, we have
\[
\left\| \sum_{n=1}^{N} v_n \right\|^2
\leq
\frac{2N}{M} \sum_{n=1}^{N}\|v_n\|^2
+
\frac{4N}{M}\sum_{m=1}^{M}
\left|
\sum_{n=1}^{N-m} \langle v_{n+m},v_n \rangle
\right|.
\]
\end{lemma}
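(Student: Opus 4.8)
The plan is to expand the square directly and then reorganize the double sum by the difference of indices. Write
\[
\left\|\sum_{n=1}^N v_n\right\|^2
=\sum_{n=1}^N\sum_{n'=1}^N \langle v_n,v_{n'}\rangle.
\]
Since both sides are translation-unaware, I first pad the sequence: set $v_n=0$ for $n\le 0$ and for $n>N$. For each fixed $j\in\{0,1,\dots,M-1\}$, consider the ``smoothed'' averaging over a window of length $M$: by convexity (Jensen) applied to $\|\cdot\|^2$,
\[
\left\|\sum_{n=1}^N v_n\right\|^2
=\left\|\frac1M\sum_{k=0}^{M-1}\sum_{n=1}^N v_{n+k}\right\|^2
\le \frac1M\sum_{k=0}^{M-1}\left\|\sum_{n=1}^N v_{n+k}\right\|^2,
\]
where I have used that $\sum_{n=1}^N v_{n+k}=\sum_{n=1}^N v_n$ is \emph{not} literally true, so instead I reindex more carefully: the standard device is to note that $\sum_{k=0}^{M-1}\sum_{n} v_{n+k}$ telescopes up to boundary terms, so that $\bigl\|\sum_{n=1}^N v_n\bigr\|$ differs from $\frac1M\sum_{k=0}^{M-1}\bigl\|\sum_{n=1}^N v_{n+k}\bigr\|$ only through $O(M)$ boundary vectors. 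Making this precise gives
\[
\left\|\sum_{n=1}^N v_n\right\|^2
\le \frac{2}{M}\sum_{k=0}^{M-1}\left\|\sum_{n=1}^{N} v_{n+k}\right\|^2 + (\text{boundary}),
\]
and then expanding each inner square and summing over $k$ collects the cross terms $\langle v_{n+m},v_n\rangle$ with multiplicity at most $M-|m|\le M$ for $|m|\le M-1$, while the diagonal terms contribute $M\sum_n\|v_n\|^2$.

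More cleanly, the route I would actually write up is the classical two-line argument: expand $\bigl\|\sum_n v_n\bigr\|^2=\sum_{n,n'}\langle v_n,v_{n'}\rangle$, split according to whether $|n-n'|<M$ or not, bound the ``far'' part trivially (it does not appear — one instead uses the averaged form), and for the ``near'' part use Cauchy--Schwarz on the outer sum over $n$ after writing $\sum_{|m|<M}v_{n+m}$. Concretely, from
\[
M\sum_{n=1}^N v_n=\sum_{n=1}^{N}\sum_{m=0}^{M-1}v_{n+m} + (\text{boundary terms of total ``length'' }\le 2M)
\]
one gets, by the triangle inequality and $(a+b)^2\le 2a^2+2b^2$,
\[
M^2\left\|\sum_{n=1}^N v_n\right\|^2
\le 2\Bigl\|\sum_{n=1}^{N}\sum_{m=0}^{M-1}v_{n+m}\Bigr\|^2 + 2(\text{boundary}).
\]
Applying Cauchy--Schwarz to the outer sum over $n$ in the first term,
\[
\Bigl\|\sum_{n=1}^N \sum_{m=0}^{M-1} v_{n+m}\Bigr\|^2
\le N\sum_{n=1}^N\Bigl\|\sum_{m=0}^{M-1}v_{n+m}\Bigr\|^2
= N\sum_{n=1}^N\sum_{m,m'=0}^{M-1}\langle v_{n+m},v_{n+m'}\rangle.
\]
Now reindex the inner double sum by $\ell=m-m'$: for $\ell=0$ one gets $\sum_n\sum_m\|v_{n+m}\|^2\le M\sum_n\|v_n\|^2$ (after shifting, using the padding), and for each $\ell\ne 0$ with $|\ell|\le M-1$ one gets a sum of the form $\sum_n\langle v_{n+|\ell|},v_n\rangle$ (again after shifting indices and using $\langle u,w\rangle+\langle w,u\rangle=2\reel\langle u,w\rangle$ to reduce to $\ell>0$), with at most $M$ choices of the remaining free parameter. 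Collecting, $M^2\bigl\|\sum v_n\bigr\|^2\le 2NM\sum_n\|v_n\|^2 + 4NM\sum_{m=1}^M\bigl|\sum_{n=1}^{N-m}\langle v_{n+m},v_n\rangle\bigr| + (\text{boundary})$, and absorbing the boundary terms into the first sum (they are dominated by $\sum_n\|v_n\|^2$ up to the constants already present) and dividing by $M^2$ yields the claimed bound.

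The only genuinely delicate point — the ``main obstacle'' such as it is — is bookkeeping the boundary/edge terms that appear when one telescopes $\sum_{m=0}^{M-1}v_{n+m}$ across the range $1\le n\le N$: one must check that they are controlled by $\tfrac{2N}{M}\sum_n\|v_n\|^2$ and so can be merged into the stated first term without worsening the constants $2$ and $4$. This is handled by extending $v_n$ by zero outside $\{1,\dots,N\}$ from the start, which makes the identity $M\sum_{n\in\Z}v_n=\sum_{n\in\Z}\sum_{m=0}^{M-1}v_{n+m}$ exact (no boundary terms), and noting that passing from $\sum_{n\in\Z}$ to $\sum_{n=1}^N$ and $\sum_{n=1}^{N-m}$ only drops terms, which is harmless for an upper bound. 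With the padding in place the proof is a direct expansion plus one application of Cauchy--Schwarz, and no further input is needed.
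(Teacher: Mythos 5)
The paper does not supply a proof of this lemma (it is quoted as a standard tool), so I am assessing your argument on its own merits. Your overall strategy — average $v_n$ over a sliding window of length $M$, apply Cauchy--Schwarz to the outer sum, expand the inner square and reindex by the gap — is indeed the classical proof. But your bookkeeping of the edge effects has a genuine gap, and the "main obstacle" paragraph does not repair it.

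The problem is twofold. First, if you keep the outer range $\sum_{n=1}^N$ (with $v_n$ padded by $0$ for $n>N$), then
\[
\sum_{n=1}^N\sum_{m=0}^{M-1}v_{n+m}
= M\sum_{k=1}^N v_k - \sum_{k=1}^{M-1}(M-k)\,v_k
=: M\sum_{k=1}^N v_k - B,
\]
and the boundary vector $B$ is \emph{not} dominated by $\frac{2N}{M}\sum_k\|v_k\|^2$ after dividing by $M^2$: by Cauchy--Schwarz $\|B\|^2$ can be of order $M^3\sum_{k<M}\|v_k\|^2$ (take $v_1=\cdots=v_{M-1}$ all equal and the rest zero), so $\|B\|^2/M^2 \sim M\sum_{k<M}\|v_k\|^2$, which exceeds $\frac{2N}{M}\sum_k\|v_k\|^2$ whenever $M\gg\sqrt N$. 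So the sentence "absorbing the boundary terms into the first sum ... without worsening the constants $2$ and $4$" is simply false for the split you set up, and the $(a+b)^2\le 2a^2+2b^2$ step does not save it. Second, your Cauchy--Schwarz step is written with the factor $N$ and the range $\sum_{n=1}^N$, which is incompatible with the exact identity you invoke in the final paragraph: with the padding, the exact telescoping is $M\sum_{k=1}^N v_k=\sum_{n=2-M}^{N}\sum_{m=0}^{M-1}v_{n+m}$, and the outer sum has $N+M-1$ (not $N$) nonzero terms. The correct argument has \emph{no} boundary terms at all: apply Cauchy--Schwarz over the $N+M-1$ values of $n$ to get
\[
M^2\Bigl\|\sum_{k=1}^N v_k\Bigr\|^2 \le (N+M-1)\sum_{n=2-M}^{N}\Bigl\|\sum_{m=0}^{M-1}v_{n+m}\Bigr\|^2,
\]
then bound $N+M-1\le 2N$ (using $M\le N$) — this is precisely where the factor $2$ in the statement comes from. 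Expanding the inner square, the diagonal contribution is exactly $M\sum_{k=1}^N\|v_k\|^2$ (each $k$ appears once per $m$), and for each gap $\ell\in\{1,\dots,M-1\}$ the off-diagonal contribution, paired with its conjugate, is $2(M-\ell)\operatorname{Re}\sum_{k=1}^{N-\ell}\langle v_{k+\ell},v_k\rangle$, whose absolute value is at most $2M\bigl|\sum_{k=1}^{N-\ell}\langle v_{k+\ell},v_k\rangle\bigr|$. Dividing by $M^2$ then gives the stated bound with constants $2$ and $4$. In short: don't try to absorb a boundary term — there isn't one if you take the full range of $n$; the price you pay is the $(N+M-1)\le 2N$ factor in Cauchy--Schwarz, and that \emph{is} the factor of $2$.
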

We now introduce some notation which will be used throughout the paper.
\subsection*{Notation}
\begin{itemize}
\item 
Let $(a_n)_{n\in\mathbb{N}}$ and $(b_n)_{n\in\mathbb{N}}$ be sequences of positive real numbers.
We write $a_n \ll b_n$ if and only if there exists a constant $C>0$, independent of $n$, such that
$a_n \leq C b_n$ for all $n\in\mathbb{N}$.
We also write $a_n \sim b_n$ if $a_n \ll b_n$ and $b_n \ll a_n$.

\item For a positive integer $n\in \setN$, 
$[n]:=\{1,2,\cdots, n\}$.
\item $\bigsqcup$ denotes \emph{disjoint} union.

\end{itemize}

Finally, we will use the following lemma which is a minor modification of \cite[Lemma 3.3]{FLW}.  For completeness, we provide a proof here.

\begin{lemma}\label{lem:technical}
Let $\left(s(n)\right)$ be a sequence of positive integers  be such that $s(n)\leq n^2$ and $(Z_{m,n})_{m,n\in \setN}$ be a family of random variables, uniformly bounded by $1$, with mean zero. Assume that $\Lambda\subset \setN$ is such that for every fixed $m\in \setN$ the random variables $\{Z_{m,n}: n\in \Lambda \}$ are independent. Further, let $(\rho_n)$ be a sequence of positive numbers and $N_0\in \setN$ such that
\begin{equation}\label{eq:condition}
\sup_{m\in \setN}(\operatorname{Var}(Z_{m,n}))\leq \rho_n \text{ for all $n\in \setN$ and } \frac{\log N} {\sum_{n=1}^N \rho_n}\leq \dfrac{1}{7} \text{ for all $N\geq N_0$}.
\end{equation}
Then, there exists a universal constant $A$ such that for every $N\geq N_0$ and $m\in \setN$, we have
\begin{equation}
\mathbb{P}\left(M_{m,N}\geq A \sqrt{R_N \log N}\right) \leq \frac{2}{N^4},
\end{equation}
where 
\begin{equation}
M_{m,N}= \sup_{t\in [0,1]} \left| \sum_{n\in[N]\cap \Lambda} Z_{m,n}\cdot e(s(n)t)\right|
\end{equation}
and $R_N=\sum_{n=1}^N \rho_n$.
\end{lemma}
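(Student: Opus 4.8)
The plan is to combine a union bound over a discrete net in $t\in[0,1]$ with a Bernstein-type (Hoeffding–Azuma) concentration inequality for the independent sums, using the hypothesis $s(n)\le n^2$ to control the modulus of continuity of $t\mapsto\sum_n Z_{m,n}e(s(n)t)$ so that a polynomially fine net suffices. Fix $m$ and write $F_m(t):=\sum_{n\in[N]\cap\Lambda}Z_{m,n}e(s(n)t)$. First I would estimate the Lipschitz constant: since $|Z_{m,n}|\le1$ and $\frac{d}{dt}e(s(n)t)=2\pi i\,s(n)e(s(n)t)$ with $s(n)\le n^2\le N^2$, we get $\|F_m'\|_\infty\le 2\pi N\cdot N^2=2\pi N^3$, hence $|F_m(t)-F_m(t')|\le 2\pi N^3|t-t'|$. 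Taking the net $\Theta_N:=\{k/N^5:0\le k\le N^5\}$ gives $\sup_{t\in[0,1]}|F_m(t)|\le \max_{t\in\Theta_N}|F_m(t)|+2\pi N^3\cdot N^{-5}=\max_{t\in\Theta_N}|F_m(t)|+2\pi N^{-2}$, so it is enough to bound $\max_{t\in\Theta_N}|F_m(t)|$, a maximum over $N^5+1$ points.

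Next, for each \emph{fixed} $t$, write $F_m(t)=\sum_{n\in[N]\cap\Lambda}Z_{m,n}e(s(n)t)$; splitting into real and imaginary parts, these are sums of independent, mean-zero, bounded (by $1$ in absolute value) complex random variables with $\operatorname{Var}(Z_{m,n}e(s(n)t))\le\operatorname{Var}(Z_{m,n})\le\rho_n$ (here I use that the $Z_{m,n}$ are independent for fixed $m$, the deterministic phase $e(s(n)t)$ does not change independence, mean zero, or boundedness, and only affects the variance by a factor of modulus $1$ coordinatewise). So the total variance is at most $R_N=\sum_{n=1}^N\rho_n$. Applying Bernstein's inequality (or Hoeffding's inequality) to the real and imaginary parts separately yields, for any $\lambda>0$,
\[
\mathbb{P}\bigl(|F_m(t)|\ge \lambda\bigr)\le 4\exp\!\left(-\frac{c\lambda^2}{R_N+\lambda}\right)
\]
for a universal $c>0$; when $\lambda\le R_N$ the exponent is $\gtrsim \lambda^2/R_N$. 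Choosing $\lambda=A\sqrt{R_N\log N}$ and noting that $R_N\ge 7\log N\ge\log N$ for $N\ge N_0$ (so that $\lambda\le \sqrt{(A^2/7)}\,R_N\le R_N$ provided $A\le\sqrt7$—alternatively just keep the $R_N+\lambda$ denominator, which is $\le 2R_N$), we get $\mathbb{P}(|F_m(t)|\ge A\sqrt{R_N\log N})\le 4\exp(-c'A^2\log N)=4N^{-c'A^2}$.

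Finally I would union bound over $t\in\Theta_N$:
\[
\mathbb{P}\!\left(\max_{t\in\Theta_N}|F_m(t)|\ge A\sqrt{R_N\log N}\right)\le (N^5+1)\cdot 4N^{-c'A^2}\le N^{-4}
\]
once $A$ is chosen large enough (depending only on the universal constant $c'$) that $c'A^2\ge 10$, absorbing the $N^5$ and the factor $8$ for large $N$; shrinking $N_0$ if necessary handles small $N$, or one simply notes $8N^{5}\le N^{6}$ eventually. Combining with the net approximation, for such $A$ and all $N\ge N_0$,
\[
\mathbb{P}\!\left(M_{m,N}\ge A\sqrt{R_N\log N}+2\pi N^{-2}\right)\le N^{-4},
\]
and since $2\pi N^{-2}\le \sqrt{R_N\log N}$ (because $R_N\log N\ge(\log N)^2\to\infty$), after enlarging $A$ by $1$ and relabelling we obtain $\mathbb{P}(M_{m,N}\ge A\sqrt{R_N\log N})\le 2N^{-4}$, as claimed; the bound is uniform in $m$ since none of the constants depended on $m$. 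The main technical point to get right is the bookkeeping of universal constants so that a single $A$ works simultaneously for the tail exponent, the union bound over the $\approx N^5$ net points, and the Lipschitz error term; the conceptual input—Lipschitz control from $s(n)\le n^2$ plus Bernstein for independent bounded summands—is exactly as in \cite[Lemma 3.3]{FLW}, and I would cite Hoeffding/Bernstein from the preliminaries rather than reprove it.
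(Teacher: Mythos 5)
Your argument is correct in outline and gives the right bound, but it follows a genuinely different route from the paper. You discretize $t$ into an $\epsilon$-net $\Theta_N$ of $\sim N^5$ points (justified by the Lipschitz bound $\|F_m'\|_\infty\le 2\pi N^3$ coming from $s(n)\le n^2$), apply a Bernstein/Hoeffding tail bound at each grid point, union-bound over the grid, and absorb the discretization error. The paper instead avoids the explicit net: it splits into cosine and sine parts, bounds the moment generating function $\mathbb{E}(e^{\lambda P_{m,N}(t)})\le e^{\lambda^2 R_N}$ at a single $t$ via the Hoeffding-type exponential moment lemma of Tao--Vu, and then---rather than discretizing---uses the same Lipschitz control from $s(n)\le n^2$ together with the mean value theorem to produce a random interval of length $\gtrsim N^{-3}$ on which $|P_{m,N}|\ge M_{m,N}/2$. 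Integrating the exponential moment over $[0,1]$, swapping $\mathbb{E}$ and $\int$, and applying Markov yields the tail bound directly, with the factor $N^3$ from the interval length playing the role your $N^5$-point union bound plays. The paper's ``integrated exponential moment'' trick is cleaner and avoids chasing the union-bound constant; your $\epsilon$-net argument is more textbook and equally valid, just with more bookkeeping.

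One bookkeeping slip to fix: you require $A\le\sqrt7$ in order to have $\lambda=A\sqrt{R_N\log N}\le R_N$ and hence the clean exponent $\gtrsim\lambda^2/R_N$, but later you also need $c'A^2\ge 10$ to beat the $N^5$ grid, and with the actual Bernstein constant these two requirements are incompatible. The repair is simply to drop the constraint $A\le\sqrt7$ and keep the full Bernstein denominator: since $\lambda\le A R_N/\sqrt7$ by the hypothesis $R_N\ge 7\log N$, the exponent is at least $\frac{A^2\log N}{2(1+A/(3\sqrt7))}$, which still tends to infinity (linearly in $A$), so choosing a sufficiently large universal $A$ works. This is a minor repair and does not affect the soundness of your approach.
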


\begin{proof}
Fix $m\geq 1$. It will be sufficient to prove the desired estimate for 
\begin{equation}
M_{m,N}= \sup_{t\in [0,1]} \left| P_{m,N}(t) \right|,
\end{equation}
where
\begin{equation}
  P_{m,N}(t)=  \sum_{[N]\cap \Lambda} Z_{m,n}\cdot \cos(2\pi s(n)t)
\end{equation}
In a similar way, one gets an estimate with $\sin(2\pi s(n)t)$ in place of $\cos(2\pi s(n)t)$.

Since $|Z_{m,n}|\leq 1$ and $\mathbb{E}_\omega (Z_{m,n})=0,$ by \cite[Lemma~1.7]{Tao_Vu}, we have $\mathbb{E}_\omega(e^{\lambda Z_{m,n}})\leq e^{\lambda^2 \text{Var} (Z_{m,n})}$ for all $\lambda\in [-1,1]$. Hence, for every $m\in [N], \lambda\in [-1,1],$ and $t\in [0,1]$, we get that
\begin{equation}\label{eq:variance}
\begin{aligned}
\mathbb{E}_\omega (e^{\lambda P_{m,N}(t)})&= \prod_{n\in [N]\cap\Lambda} \mathbb{E}_\omega ( Z_{m,n}\cdot \cos\left(2\pi s(n)t)\right)\\
&\leq \prod_{n\in [N]\cap\Lambda} e^{(\lambda\cos 2\pi s(n)t)^2 \text{Var} (Z_{m,n})}\\
&\leq e^{\lambda^2 R_N},
\end{aligned}
\end{equation}
Since $s(n)\leq n^2$, by using the argument of \cite[Chapter 5, Proposition 5]{Kahane}, there exist random intervals $I_{m,N}$ of length $|I_{m,N}|\geq \frac{1}{N^3}$ such that $|P_{m,N}(t)|\geq \frac{M_{m,N}}{2}$ for every $t\in I_{m,N}$. Using this we get that
\begin{align*}
\mathbb{E}_\omega (e^{\lambda M_{m,N}(t)}/2) &\le N^3\cdot \mathbb{E}_\omega \left(\int_{I_{m,N}}(e^{\lambda P_{m,N}(t)}+e^{-\lambda P_{m,N}(t)})dt\right)\\
&\le N^3\cdot \mathbb{E}_\omega \left(\int_{[0,1]}(e^{\lambda P_{m,N}(t)}+e^{-\lambda P_{m,N}(t)})dt\right)\\
&= N^3\cdot \int_{[0,1]} \mathbb{E}_\omega \left(e^{\lambda P_{m,N}(t)}+e^{-\lambda P_{m,N}(t)}\right)dt\\
&\leq 2 N^3 e^{\lambda^2 R_N}.
\end{align*}
The last inequality follows from \eqref{eq:variance}.

Therefore, we have
\begin{equation}
\mathbb{E}_\omega (e^{\lambda M_{m,N}/2})\leq 2 N^3 \cdot e^{\lambda^2 R_N}.
\end{equation}
Rewriting the above expression we get
\begin{align}
\mathbb{E}_\omega \left(e^{\lambda /2\left(M_{m,N}-2R_N\lambda-2\log (N^7)\lambda^{-1}\right)}\right)\leq \frac{2}{N^4}.
\end{align}
This implies that
\begin{equation}\label{eq:probability}
\mathbb{P}\left(M_{m,N}\geq 2 R_N \lambda + 14 \log N\cdot \lambda^{-1}\right) \leq \frac{2}{N^4}.
\end{equation}
The above inequality is true for all $\lambda\in [-1,1]$. Note that the function $f(\lambda)=2R_N \lambda +14 \log N\cdot \lambda^{-1}$ attains a minimum $\sqrt{28 R_N \log N}$ at $\lambda=\sqrt{\frac{7\log N}{ R_N}}$. By the given condition, $\sqrt{\frac{7\log  N}{ R_N}}<1$ for all $N\geq N_0$. Plugging in $\lambda=\sqrt{\frac{7\log N}{ R_N}}$ in \eqref{eq:probability}, we get the desired conclusion.
\end{proof}

\begin{corollary}\label{lem:Bourgain}
Let $Y_n$ be a sequence of $\{0,1\}$-valued independent random variables with $\mathbb{P}(Y_n=1)=\tau_n$ and $\mathbb{P}(Y_n=0)=1-\tau_n$. Suppose 
\begin{equation}
Y'_n=Y_n-\tau_n,\quad T(N)=\sum_{n=1}^N \tau(n) \text{ and } P_N(t)= \sum_{n=1}^N Y'_ne(nt).
\end{equation}
 Suppose that $\frac{\log N} {T(N)}\leq \frac{1}{7}$ for $N\geq N_0$. Then there is a universal constant $C>0$ such that for every $N\geq N_0$, we have
\begin{equation}
\mathbb{P}\left(\sup_{t\in [0,1]}|P_N(t)|\geq  C\sqrt{ T(N)\log N}\right)\leq \frac{2}{N^4}.
\end{equation}
\end{corollary}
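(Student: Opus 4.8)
The plan is to obtain Corollary~\ref{lem:Bourgain} as a direct specialization of Lemma~\ref{lem:technical}, so the work is entirely a matter of choosing the parameters there and verifying the hypotheses. Concretely, I would set $s(n):=n$ (which satisfies $s(n)\le n^2$ for every $n\ge 1$), take $\Lambda:=\mathbb{N}$ so that $[N]\cap\Lambda=[N]$, and define the doubly-indexed family by $Z_{m,n}:=Y'_n=Y_n-\tau_n$ for all $m$ --- that is, a family that does not depend on $m$. These variables are uniformly bounded by $1$, since $Y_n\in\{0,1\}$ and $\tau_n\in[0,1]$ give $|Y_n-\tau_n|\le 1$; they have mean zero because $\mathbb{E}(Y_n)=\tau_n$; and for each fixed $m$ the family $\{Z_{m,n}:n\in\Lambda\}$ is independent by the hypothesis that the $Y_n$ are independent.

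Next I would take $\rho_n:=\tau_n$. Then $\operatorname{Var}(Z_{m,n})=\operatorname{Var}(Y_n)=\tau_n(1-\tau_n)\le\tau_n=\rho_n$, so the first half of condition~\eqref{eq:condition} holds, and $R_N=\sum_{n=1}^N\rho_n=\sum_{n=1}^N\tau_n=T(N)$, so the second half of~\eqref{eq:condition}, namely $\frac{\log N}{R_N}\le\frac17$ for $N\ge N_0$, is exactly the standing assumption $\frac{\log N}{T(N)}\le\frac17$. With these choices one has, for every $m$ (in particular $m=1$),
\[
M_{m,N}=\sup_{t\in[0,1]}\Big|\sum_{n\in[N]\cap\Lambda}Z_{m,n}\,e(s(n)t)\Big|=\sup_{t\in[0,1]}\Big|\sum_{n=1}^N Y'_n\,e(nt)\Big|=\sup_{t\in[0,1]}|P_N(t)|,
\]
so Lemma~\ref{lem:technical} produces a universal constant $A$ with $\mathbb{P}\big(\sup_{t\in[0,1]}|P_N(t)|\ge A\sqrt{T(N)\log N}\big)\le 2/N^4$ for all $N\ge N_0$; taking $C:=A$ finishes the proof.

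There is no real obstacle here: all the analytic content --- the sub-Gaussian estimate $\mathbb{E}(e^{\lambda Z})\le e^{\lambda^2\operatorname{Var}(Z)}$ from Tao--Vu, the mean-value-theorem discretization of $\sup_t$, and the optimization of the Chernoff parameter $\lambda$ --- has already been carried out in the proof of Lemma~\ref{lem:technical}. The only point requiring care is the bookkeeping of the specialization, above all verifying $|Y'_n|\le 1$, the variance bound $\operatorname{Var}(Y'_n)\le\tau_n$, and the identification $R_N=T(N)$ that makes the $\tfrac17$-condition transfer verbatim.
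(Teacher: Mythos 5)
Your proof is correct and takes exactly the route the paper does: it derives the corollary by specializing Lemma~\ref{lem:technical} with $s(n)=n$, $\Lambda=\mathbb{N}$, $Z_{m,n}=Y'_n$, and $\rho_n=\tau_n$, so that $R_N=T(N)$ and the $\tfrac17$-condition transfers verbatim. The paper's proof is just the one-line observation that $Y'_n$ satisfies the hypotheses of that lemma with $\rho_n=\tau_n$; you have simply written out the bookkeeping explicitly.
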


\begin{proof}
Observe that $Y_n'$ satisfies the hypothesis of the above theorem with $\rho_n= \tau_n$.
\end{proof}

\section{Proof of Theorem \ref{thmA} and \ref{thmB}} \label{sec:ProofsA-B}

\subsection{Heuristic idea of the proofs} Firstly, note that the averages
$$
\frac{1}{N} \sum_{n=1}^N   f_1(T_1^ {a_n(y)}x) f_2(T_2^{a_n(y)}x) \quad \text{and}\quad \frac{1}{\sum_{n=1}^N \sigma_n} \sum_{n=1}^N  X_n(y) f_1(T_1^ {n}x) f_2(T_2^{n}x)
$$
have the same limiting behavior. On the other hand, from \cite{Conze_Lesigne}, we know that $$\dfrac{1}{N}\sum_{n=1}^N f_1(T_1^ {n}x) f_2(T_2^{n}x)$$ 
converges in mean, and the convergence is pointwise if $T_1$ and $T_2$ are powers of the same transformation \cite{Bourgain_double_recurrence_ae_conv:1990}. Therefore, it suffices to show that there exists a subset $\Omega\subset [0,1]$ (independent of any m.p.s.) such that $\lambda(\Omega)=1$ and for every $y\in \Omega$, 
\begin{equation}
    \lim_{N\to\infty}\frac{1}{\sum_{n=1}^N \sigma_n} \sum_{n=1}^N  (X_n(y)-\sigma_n) f_1(T_1^ {n}x) f_2(T_2^{n}x)=0\label{reduction}
\end{equation}
for $\mu$-almost every $x\in X$, where $\sigma_n:=\mathbb{E}(X_n)=n^{-a}$. To show (\ref{reduction}), we will approximate $X_n$ by $X_n'$ in such a way so that
\begin{enumerate}[(i)]
   \item $X_{n_k}'$ are mutually independent random variables whenever $n_k$'s are far away from each other, and
    \item for $\lambda$-almost every $y$, $(X_n(y))_n$ and $(X_n'(y))_n$ generate essentially the same random sequence.
 
\end{enumerate}
Based on these properties, we will partition the given random ergodic average into a small number of pieces so that each piece becomes a random ergodic average generated by \emph{independent} random variables. For the independent case, some techniques are already known for showing convergence (for instance, see Theorem \ref{FLW_Szemeredi}). We will use them to get our desired results.
\subsection{Approximation of the given intervals with suitable dyadic intervals}

 Let $I_n=(0, n^{-a})$. 
 For each $n^{-a}$, let $k$ be the positive integer such that  $r^{-(k+1)}< n^{-a}\leq r^{-{k}}$. Use this to define a $r$-adic rational $\gamma_n$ with denominator at most $r^{{{\frac{2k}{a}}}}$ and such that
\begin{equation}
0<(\gamma_n- n^{-a})\leq \frac{1}{r^{{\frac{2k}{a}}}}.
\end{equation}
 Also, we make sure that the chosen sequence $(\gamma_n)$ is non-increasing.\\
 Define
\begin{equation}\label{eq:random2}
J_n=\Big\{y\in [0,1]: r^n y \mod 1 \in (0,\gamma_n)\Big\}
\end{equation}

In the following lemmas, we will show that $J_n$ have some nice properties.

\begin{lemma}\label{lem:prop1}
Let $N\geq 4$ be an integer. For any positive integer $k\geq 2$ and every finite sequence of positive integers $n_1< n_2<n_3<\cdots<n_k\leq N$ satisfying $n_{i}-n_{i-1}>2\log N$ for $ i=2,3,\cdots,k$, we have
\begin{equation}\label{covariance}
\lambda\Big(\bigcap_{i=1}^k J_{n_i}\Big)= \lambda(J_{n_1})\lambda(J_{n_2})\cdots \lambda(J_{n_k}).
\end{equation}
\end{lemma}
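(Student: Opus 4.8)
The plan is to recognize each $J_n$ as, up to a $\lambda$-null set, a cylinder set in the base-$r$ digit expansion of $y$, supported on a short block of consecutive coordinates, and then to invoke the fact that under Lebesgue measure the base-$r$ digits of $y$ are i.i.d.\ uniform, so that cylinder events over disjoint coordinate blocks are mutually independent. Write $y=\sum_{j\ge1}y_j r^{-j}$ with $y_j\in\{0,\dots,r-1\}$; then $r^ny\bmod1=\sum_{j\ge1}y_{n+j}r^{-j}$, so whether $y\in J_n$ is decided by the digits $(y_{n+1},y_{n+2},\dots)$ alone.

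First I would use the arithmetic of $\gamma_n$. By construction $\gamma_n=c_n r^{-d_n}$ is $r$-adic with $r^{d_n}\le r^{2k/a}\le n^2$ (the last step because $r^k\le n^a$), so the coordinate block $B_n:=\{n+1,\dots,n+d_n\}$ has length $d_n\le 2\log_r n\le 2\log N$ for $n\le N$. We may assume $\gamma_n<1$ (this holds for all but finitely many $n$; for the remaining indices $J_n$ is conull and may be discarded from \eqref{covariance}). Writing $g_1\cdots g_{d_n}$ for the $d_n$-digit base-$r$ expansion of $c_n$, the condition $r^ny\bmod1\in(0,\gamma_n)$ is equivalent, off a $\lambda$-null set of $r$-adic rationals, to the finitary condition that $(y_{n+1},\dots,y_{n+d_n})$ is lexicographically strictly smaller than $(g_1,\dots,g_{d_n})$. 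Hence $J_n$ agrees, modulo $\lambda$-null sets, with a set $\widetilde J_n$ measurable with respect to the coordinates in $B_n$, and $\lambda(\widetilde J_n)=\lambda(J_n)$.

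Next I would cash in the separation hypothesis: for $i\ge2$ we have $n_i-n_{i-1}>2\log N\ge d_{n_{i-1}}$, so $B_{n_{i-1}}$ ends at $n_{i-1}+d_{n_{i-1}}\le n_i<\min B_{n_i}$; as the $n_i$ increase, the blocks $B_{n_1},\dots,B_{n_k}$ are pairwise disjoint. (This is the only place the hypothesis enters, and it is exactly where replacing the analytically natural cutoff $n^{-a}$ by the nearby $r$-adic $\gamma_n$ --- whose denominator is only $O(n^2)$ --- earns its keep.) Since the digits $(y_j)_j$ are i.i.d.\ uniform under $\lambda$, the events $\widetilde J_{n_1},\dots,\widetilde J_{n_k}$, being measurable over disjoint coordinate blocks, are mutually independent; therefore $\lambda\bigl(\bigcap_i\widetilde J_{n_i}\bigr)=\prod_i\lambda(\widetilde J_{n_i})$, and exchanging each $\widetilde J_{n_i}$ for $J_{n_i}$ changes neither side, which gives \eqref{covariance}.

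The main obstacle --- indeed the only step that is not routine --- is the first one: one must check that $\gamma_n$ was engineered precisely so that $J_n$ collapses to a cylinder on the $O(\log n)$ coordinates $n+1,\dots,n+d_n$, and one must verify that the mismatch between the open-interval condition $\{r^ny\bmod1\in(0,\gamma_n)\}$ and its lexicographic digit surrogate is carried by a $\lambda$-null set, so that passing from $J_n$ to $\widetilde J_n$ loses nothing for Lebesgue measure --- both for the $k$-fold intersection and for each of the $k$ factors. Granting that, the disjointness bookkeeping for the blocks and the independence of cylinders over disjoint coordinates are entirely standard.
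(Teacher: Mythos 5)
Your proof is correct and rests on the same key insight as the paper's: $\gamma_n$ was engineered to be $r$-adic with denominator $r^{d_n}$, $d_n\le 2\log N$, precisely so that (up to a $\lambda$-null set) $J_n$ is a cylinder event on the digit block $\{n+1,\dots,n+d_n\}$, and the separation hypothesis $n_i-n_{i-1}>2\log N$ makes these blocks pairwise disjoint. Where the paper packages this as an explicit induction — showing each subinterval $J_{l,n_i}$ of $J_{n_i}$ is a disjoint union of full period cells of $J_{n_{i+1}}$ and then telescoping the measure computation — you reach the identical conclusion by appealing directly to the i.i.d.\ uniformity of base-$r$ digits under Lebesgue measure and the independence of events measurable over disjoint coordinate blocks, which is a cleaner formulation of the same geometric fact.
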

\begin{proof} Note that for every $n\in \setN$,
\begin{equation}
J_n= \bigsqcup_{l=0}^{r^n-1} \left(\dfrac{l}{r^n}+\dfrac{1}{r^n}(0,\gamma_n)\right):= \bigsqcup_{l=0}^{r^n-1} J_{l,n}.
\end{equation}

Thus, $J_n$
is a periodic set with period $\frac{1}{r^n}$, and each of its subinterval $J_{l,n}$ has length $\frac{\gamma_n}{r^n}$. Also, for each $l\in \left\{0,1,\cdots,r^{n}-1\right\}$, we define $I_{l,n}$ to be one full period cycle of $J_n$, that is, $I_{l,n}:=\left(\dfrac{l}{r^n},\dfrac{l+1}{r^n}\right)$. It will be sufficient to show that 
\begin{equation}\label{eq:induction}
\lambda\left(I_{l,n_1}\cap \left(\bigcap_{i=1}^k J_{n_i}\right)\right)= \lambda(J_{l,n_1})\cdot \prod_{i=2}^k \lambda(J_{n_{i}}).
\end{equation}
Indeed, summing over $l$ in its range, we will get
\begin{equation}
\lambda\left([0,1]\cap \left(\bigcap_{i=1}^k J_{n_i}\right)\right)= \sum_{l=0}^{r^n-1} \lambda(J_{l,n_1})\cdot \prod_{i=2}^k \lambda(J_{n_{i}})= \prod_{i=1}^k \lambda(J_{n_{i}}).
\end{equation}

We will establish \eqref{eq:induction} by induction on the number of indices. Before that, let us make the following observation. By definition, $\gamma_{n}$ is a rational number of the form $\frac{s}{r^t}$ for some $s,t\in \N$ with $t\leq  2 \log n$. Since for $1\leq i< k-1$, $n_{i+1}>n_i+2\log N\geq n_i+2\log n_i\geq n_i+t$, the length of each subinterval of $J_{n_i}$ is an integer multiple of the period of $J_{n_{i+1}}$. Moreover, the initial point of $J_{l,n_i}$ is the same as the initial point of $I_{s,n_{i+1}}$ for some $s$. Hence, 
\begin{equation}\label{eq:decomposition}
J_{l,n_i}=\bigsqcup_{s\in A_{i+1}} I_{s,n_{i+1}},
\end{equation}
for some indexing set $A_{i+1}$.

Let $n_1,n_2\in\setN$ be such that  $1\leq n_{1}<n_2\le N$ and $n_2-n_1\geq 2\log N$. By \eqref{eq:decomposition}, for any $l\in \left\{0,1,\cdots,r^{n_1}-1\right\}$ we have
\begin{equation}
I_{l,n_1}\cap J_{n_1}\cap J_{n_{2}}=J_{l,n_1}\cap J_{n_{2}}= \bigsqcup_{s\in A_{2}} I_{s,n_{2}} \cap J_{n_{2}}.
\end{equation}
Hence 
\begin{equation}
\begin{split}
    \lambda\left(I_{l,n_1}\cap J_{n_1}\cap J_{n_{2}}\right)=\lambda\left( \bigsqcup_{s\in A_2} I_{s,n_{2}} \cap J_{n_{2}}\right)=\sum_{s\in A_2}\lambda\left( I_{s,n_{2}} \cap J_{n_{2}}\right)\\= \sum_{s\in A_2} \lambda(I_{s,n_{2}})\cdot  \lambda(J_{n_{2}})= \lambda\left( \bigsqcup_{s\in A_2} I_{s,n_{2}}\right)\cdot \lambda(J_{n_{2}})= \lambda(J_{l,n_1})\cdot \lambda(J_{n_{2}}).
\end{split}
\end{equation}

Let $n_1,n_2,\cdots, n_k$ be as in the lemma. Suppose that \eqref{eq:induction} is true for the last $(k-1)$ indices. That is,
\begin{equation}\label{eq:hypo}
\lambda\left(I_{s,n_2}\cap \left(\bigcap_{i=2}^k J_{n_i}\right)\right)= \lambda(J_{s,n_2})\cdot \prod_{i=3}^k \lambda(J_{n_{i}}).
\end{equation}
 By \eqref{eq:decomposition}, for any $l\in \left\{0,1,\cdots r^{n_1}-1\right\}$, we have
\begin{equation}
I_{l,n_1}\cap \left(\bigcap_{i=1}^k J_{n_i}\right)=J_{l,n_1}\cap\left(\bigcap_{i=2}^k J_{n_i}\right)= \bigsqcup_{s\in A_2} I_{s,n_{2}} \cap \left(\bigcap_{i=2}^k J_{n_i}\right).
\end{equation}
Hence 
\begin{equation*}
\begin{split}
\lambda\left(I_{l,n_1}\cap \left(\bigcap_{i=1}^k J_{n_i}\right)\right)=\lambda\left( \bigsqcup_{s\in A_2} I_{s,n_{2}} \cap \left(\bigcap_{i=2}^k J_{n_i}\right)\right)=\sum_{s\in A_2}\lambda\left( I_{s,n_{2}} \cap \left(\bigcap_{i=2}^k J_{n_i}\right)\right)\\= \sum_{s\in A_2} \lambda(J_{s,n_{2}})\cdot \prod_{i=3}^k \lambda(J_{n_{i}}) =  \lambda\left( \bigsqcup_{s\in A_2} I_{s,n_{2}}\right)\cdot \prod_{i=2}^k \lambda(J_{n_{i}}) = \lambda(J_{l,n_1})\cdot \prod_{i=2}^k \lambda(J_{n_{i}}),
\end{split}
\end{equation*}
where we used our induction hypothesis \eqref{eq:hypo} and  the fact that $\lambda(J_{s,n_2})= \lambda(I_{s,n_2})\cdot \lambda(J_{n_2})$.
This proves \eqref{eq:induction} and thereby finishes the proof of the lemma. 
\end{proof}

\begin{lemma}\label{lem:prop2}
Let
\begin{equation}
S(y):=\Big\{n\in \setN: r^n y \mod 1 \in I_n\Big\} \text{ and } T(y):=\Big\{n\in \setN: r^n y \mod 1 \in J_n \Big\}
\end{equation}
Then for $\lambda$-almost every $y\in [0,1]$, $S(y)\triangle T(y)$ is a finite set, where $\triangle$ means symmetric difference.
\end{lemma}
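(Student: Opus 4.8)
The plan is to reduce the symmetric difference to a one-sided difference and then apply the first Borel--Cantelli lemma. First I would observe that, since $\gamma_n>n^{-a}$ by construction, we have $I_n=(0,n^{-a})\subseteq(0,\gamma_n)$, and hence $S(y)\subseteq T(y)$ for every $y\in[0,1]$. Consequently
\[
S(y)\triangle T(y)=T(y)\setminus S(y)=\bigl\{n\in\setN:\ r^n y\mod 1\in[n^{-a},\gamma_n)\bigr\},
\]
so it suffices to show that for $\lambda$-almost every $y$ only finitely many $n$ belong to this set.

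Next, for each fixed $n$ the map $y\mapsto r^n y\mod 1$ preserves Lebesgue measure on $[0,1]$ (it is the $n$-th iterate of the $r$-to-$1$ map $y\mapsto ry\mod 1$), so the "bad event" $E_n:=\{y\in[0,1]:\ r^n y\mod 1\in[n^{-a},\gamma_n)\}$ has measure $\lambda(E_n)=\gamma_n-n^{-a}$; this can also be read off from the periodic decomposition of $J_n$ used in Lemma~\ref{lem:prop1}. It then remains to bound these measures summably. Recall that $k=k(n)$ was chosen so that $r^{-(k+1)}<n^{-a}\le r^{-k}$, which forces $k>a\log_r n-1$, and that $0<\gamma_n-n^{-a}\le r^{-2k/a}$ by the choice of $\gamma_n$. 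Combining these,
\[
\lambda(E_n)=\gamma_n-n^{-a}\le r^{-2k/a}\le r^{-2\log_r n+2/a}=r^{2/a}\,n^{-2},
\]
whence $\sum_{n\ge 1}\lambda(E_n)\le r^{2/a}\sum_{n\ge1}n^{-2}<\infty$. By the first Borel--Cantelli lemma, $\lambda$-almost every $y$ lies in only finitely many $E_n$, i.e.\ $T(y)\setminus S(y)$, and hence $S(y)\triangle T(y)$, is finite.

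I do not expect a genuine obstacle here: the argument is short and the measure computation is immediate. The only point requiring a little care is translating the defining inequalities $r^{-(k+1)}<n^{-a}\le r^{-k}$ into the lower bound $k>a\log_r n-1$ that makes the series $\sum_n(\gamma_n-n^{-a})$ converge --- and this is precisely why the $r$-adic approximants $\gamma_n$ were constructed in the setup with denominators of size $r^{2k/a}$ rather than merely $r^{k}$ (the latter would only give $\gamma_n-n^{-a}\le r^{-k}$, which is not summable in $n$).
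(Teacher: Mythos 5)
Your argument is correct and matches the paper's proof in essence: both reduce to showing that the events $E_n=\{y:\ r^ny\bmod 1\in[n^{-a},\gamma_n)\}$ have summable measure and then invoke Borel--Cantelli. The only cosmetic difference is that you derive the direct pointwise bound $\lambda(E_n)\le r^{2/a}n^{-2}$ from $k>a\log_r n-1$, whereas the paper organizes the same estimate as a double sum over $k$ and over $n$ with $r^{k/a}\le n<r^{(k+1)/a}$.
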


\begin{proof}
Let
\begin{equation}
F_n=\Big\{y\in [0,1]: r^n y \mod 1 \in (n^{-a},\gamma_n)\Big\}.
\end{equation}
Clearly, $\lambda(F_n)=(\gamma_n-n^{-a})$.
By the Borel-Cantelli Lemma, it suffices to show that $\lambda(F_n)$ is summable. Indeed,
\begin{align*}
\sum_{n\in \N}\lambda(F_n)=\sum_{k\in\N}\sum_{n=r^{\frac{k}{a}}}^{r^{\frac{k+1}a}} (n^{-a}-\gamma_n) \leq \sum_{k\in\N}\sum_{n=r^{\frac{k}a}}^{r^{\frac{(k+1)}a}} \frac{1}{r^{\frac{2k+1}{a}}}<\infty,
\end{align*} 
as desired. \end{proof}

\subsection{Proof of Theorem~\ref{thmA}}
Let $X_n'$ be a sequence of $\{0,1\}$-valued random variables defined by
\begin{equation}
X_n' (y)=1 \text{ iff } r^n y \mod 1 \in J_n.
\end{equation}
We have $\lambda(X_n'=1)=\lambda(J_n)=\gamma_n$, and let $Z_n=X_n'-\gamma_n$ and $W_N=\sum_{n=1}^N\gamma_n$. By Lemma~\ref{lem:prop2}, it will be sufficient to show that for Lebesgue a.e. $y$ and for every $f\in L^2(X)$
\begin{equation}\label{eq:toshow}
\frac{1}{W_N} \sum_{n=1}^N  Z_n(y) f(T^n x) \to 0 \text{ for a.e. $x$}.
\end{equation}
By Herglotz's spectral theorem and the lacunary trick (i.e., Lemma~\ref{lacunary_trick}), it will be sufficient to show that there exists a sequence $(b_n)$, and for Lebesgue almost every $y$ there exists a constant $C(y)$ such that
 \begin{equation}\label{eq:expobound1}
\left|\sup_{t} \sum_{n=1}^N Z_n(y)e(nt)\right|\leq C(y) b_N,
 \end{equation}
where $e(x)=e^{2\pi i x}$, and for every $\rho>1$,
\begin{equation}\label{eq:expobound2}
\sum_{N=1}^\infty\left|\dfrac{b_{\lfloor\rho^N \rfloor}}{W_{\lfloor\rho^N\rfloor}}\right|^2<\infty.
\end{equation}

To prove this, we want to apply Corollary~\ref{lem:Bourgain}. So, we write
 \begin{align*}
    \{1,\cdots,N\}=\bigsqcup_{k\in R}S_k ,\text{ where } R=\lceil{2\log N \rceil} \text{ and }
S_k=\{n\in [N]: n\equiv k \mod R\}.
 \end{align*}
 Without loss of any generality, we can assume that $R$ divides $N$. Indeed, if $R$ does not divide $N$, then we write $R=N'+j$ for some $j$, $0\leq j\leq R$. Then we work with $N'$. The contribution of $N'+1,N'+2,\cdots, N$ will be insignificant.
Let $C$ be the constant of Corollary~\ref{lem:Bourgain}. By union bound, we get
\begin{equation}\label{eq:union}
\begin{split}
  &\lambda\left(\sup_t\left|\sum_{n=1}^NZ_n(y)e(nt)\right|\geq C R\sqrt{W_N\log N}\right)\\
  &\leq \lambda\left(\bigcup_{k=1}^R\left\{\sup_t\left|\sum_{n\in S_k}Z_n(y)e(nt)\right|\geq C \sqrt{W_N\log N}\right\}\right)\\ 
  &\leq \sum_{k=1}^R \lambda\left(\sup_t\left|\sum_{n\in S_k}Z_n(y)e(nt)\right|\geq C \sqrt{W_N\log N}\right).  
\end{split}
\end{equation}

For a fixed $k\in [R]$,
\begin{equation}\label{eq:reduction}
\begin{split}
\sup_t\left|\sum_{n\in S_k} Z_n(y)e(nt)\right|&= \sup_t\left|\sum_{q=1}^{N/R } Z_{k+qR}(y)e((k+qR)t)\right|\\
&= \sup_{t}\left|\sum_{q=1}^{N/R } Z'_{q}(y)e(qRt)\right|,
\end{split}
\end{equation}

where $Z_q'(y)=Z_{k+qR}$ when $k+qR\leq N$, otherwise $Z_q'(y)=0$. By Lemma~\ref{lem:prop1}, $Z_q'$ are independent random variables. Observe that $\Var(Z_q')\leq \gamma_{k+qR}$. Hence, applying Corollary~\ref{lem:Bourgain}, we get

\begin{equation*}
\lambda\left(\left\{\sup_t\left|\sum_{q=1}^{N/R} Z'_{q}(y)e(qt')\right|\geq C \sqrt{\left(\sum_{q=1}^{N/R}{\gamma_{k+qR}}\right)\log (N/R)} \right\}\right) \leq \frac{1}{ (N/R)^4}\leq \dfrac{R^4}{N^4}.
\end{equation*}
Then,
\begin{equation}\label{eq:fixk}
\lambda\left(\left\{\sup_t\left|\sum_{q=1}^{N/R} Z'_{q}(y)e(qt')\right|\geq C \sqrt{W_N\log N} \right\}\right) \leq \frac{R^4}{N^4}.
\end{equation}
Combining \eqref{eq:union},\eqref{eq:reduction} and \eqref{eq:fixk}, we get

\begin{equation}
\lambda \left(\left\{\sup_t\left|\sum_{n=1}^{N}Z_n(y)e(nt)\right|\geq C R\sqrt{W_N\log N}\right\}\right)\leq \frac{R^5}{N^4}
\end{equation}

Hence, by applying Borel-Cantelli lemma and using the fact that $R= \lceil 2\log N \rceil$, for Lebesgue a.e. $y$ there is a constant $C(y)$ such that
\begin{align}
\sup_t\left|\sum_{n=1}^N Z_n(y)e(nt)\right| \leq C(y)(2\log N+1) \sqrt{ N^{1-a} \log N}.
\end{align}
Thus, taking $b_n=(2\log N+1) \sqrt{ N^{1-a} \log N}$ both \eqref{eq:expobound1} and \eqref{eq:expobound2} are satisfied. This proves \eqref{eq:toshow} and thereby finishes the proof of Theorem~\ref{thm:A}.

\subsection{Proof of Theorem~\ref{thmB}}
\begin{proof}
We use the notation from the last proof and  without loss of generality assume that $R$ divides $N$.  As before it is sufficient to show that for Lebesgue a.e. $y$ and for every $f\in L^2(X)$
\begin{equation}
\frac{1}{W_N} \sum_{n=1}^N  Z_n(y) f_1(T_1^ {n}x) f_2(T_2^{n}x) \to 0 \text{ for a.e. $x$}.
\end{equation}
By the lacunary trick (i.e., Lemma~\ref{lacunary_trick}) and the Borel-Cantelli lemma, it is enough to show that
\begin{equation}\label{eq:toshow1}
\sum_{N\in \left\{\lfloor\rho^k\rfloor:k\in \setN\right\}} \left\|\frac{1}{N^{1-a}}\sum_{n=1}^N Z_n(y) T_1^ {n}f_1\cdot T_2^{n} f_2 \right\|^2_{L^2(\mu)}<\infty.
\end{equation}
We have that
\begin{align*}
A_N(y)&:=N^{2-2a}\cdot\left\|\sum_{n=1}^N Z_n(y) T_1^ {n}f_1\cdot T_2^{n} f_2 \right\|^2_{L^2(\mu)}\\
&= N^{2-2a}\cdot\left\|\sum_{k=1}^R\sum_{n\in [N]\cap S_k} Z_n(y) T_1^ {n}f_1\cdot T_2^{n} f_2 \right\|^2_{L^2(\mu)}\\
&\leq N^{2-2a}\cdot \left(\sum_{k=1}^R \left\|\sum_{n\in [N]\cap S_k} Z_n(y) T_1^ {n}f_1\cdot T_2^{n} f_2 \right\|_{L^2(\mu)} \right)^2\\
&\leq (2 \log N+1) \cdot \sum_{k\in [R]}A_{N}^{(k)}(y),
\end{align*}

where
\begin{align*}
A_{N}^{(k)}(y):&=N^{2-2a}\cdot\left\|\sum_{n\in [N]\cap S_k} Z_n(y) T_1^ {n}f_1\cdot T_2^{n} f_2 \right\|^2_{L^2(\mu)},
\end{align*}
 and in the last step we used the Cauchy-Schwarz inequality
and the fact that $R= \lceil 2\log N \rceil$.

\begin{align*}
A_{N}^{(k)}(y):&=N^{2-2a}\cdot\left\|\sum_{n\in [N]\cap S_k} Z_n(y) T_1^ {n}f_1\cdot T_2^{n} f_2 \right\|^2_{L^2(\mu)}\\
&= N^{2-2a}\cdot\left\|\sum_{q\in [N/R]} Z_{k+qR}(y) T_1^ {k+qR}f_1\cdot T_2^{k+qR} f_2 \right\|^2_{L^2(\mu)}\\
&=N^{2-2a}\cdot\left\|\sum_{q\in [N/R]} Z'_{q}(y) T_1^ {\phi(q)}f_1\cdot T_2^{\phi(q)} f_2 \right\|^2_{L^2(\mu)},
\end{align*}
where
\begin{equation*}
\phi(q)={k+qR} \text{ and }  Z'_{q}=Z_{k+qR} \text{ when $k+qR\leq N$, otherwise $Z_q'(y)$=0} .
\end{equation*}
Applying van der Corput's lemma with $M=N/R$, we get
$A_{N}(k)=A_{1,N}(k)+A_{2,N}(k)$,
where 
\begin{equation*}
A_{1,N}^{(k)}(y):= N^{2-2a}\cdot\sum_{q\in [N/R]} \left\|Z'_{q}(y) T_1^ {\phi(q)}f_1\cdot T_2^{\phi(q)} f_2 \right\|^2_{L^2(\mu)}
\end{equation*}
and 
\begin{equation*}
A_{2,N}^{(k)}(y):=N^{2-2a}\cdot\sum_{m=1}^M \left|\sum_{n=1}^{M-m} \int Z'_{n+m}(y)\cdot Z'_n(y)\cdot T_1^{\phi(n+m)} f_1\cdot T_2^{\phi(n+m)} f_2\cdot T_1^{\phi(n)} \overline{f_1}\cdot T_2^{\phi(n)} \overline{f_2}\right|
\end{equation*}
Composing $T_1^{-nR}$, using Cauchy-Schwarz inequality, assuming $S=T_1^{-R}T_2^R$ and $g_m=T_2^{k+mR}f_2\cdot T_2^k \overline{f_2}$ we get
\begin{align*}
A_{2,N}^{(k)}(y)&\leq N^{2-2a}\cdot\sum_{m=1}^M \left|\left|\sum_{n=1}^{M-m} Z'_{n+m}(y)\cdot Z'_n(y) S^{n} g_m\right|\right|_{L^2(\mu)}\\
&\leq N^{1-2a}\cdot M_{m,N}(y),
\end{align*}
where
\begin{equation*}
M_{m,N}(y):=\max_{1\leq m\leq M} \sup_{t\in [0,1]} \left|\sum_{n=1}^{M-m} Z'_{n+m}(y)\cdot Z'_n(y) e(nt)\right|
\end{equation*}
Our goal is to apply Lemma~\ref{lem:technical} with taking $Z_{m,n}=Z'_{n+m}\cdot Z'_n $ and $\Lambda=\Lambda_l:= [N-m]\cap \Lambda_{l,m}$, $l=1,2$ where $\Lambda_{l,m}$ is defined as follows:
\begin{equation}\label{eq:Lambda}
\begin{aligned}
\Lambda_{1,m}&=\{n: 2km<n\leq (2k+1)m \text{ for some non-negative integer }k\}, \text{ and }\\
\Lambda_{2,m}&=\{n: (2k+1)m<n\leq (2k+2)m \text{ for some non-negative integer }k\}.
\end{aligned}
\end{equation}
Observe that for any $\eta>0$,
\begin{align*}
\lambda\left(y: M_{m,N}(y)>\eta \right)&\leq \sum_{m\leq M} \lambda\left(y:\sup_{t\in [0,1]} \left|\sum_{n=1}^{M-m} Z'_{n+m}(y)\cdot Z'_n(y) e(nt)\right|>\eta\right)\\
&\leq \sum_{l=1,2} \sum_{m\leq M} \lambda\left(y:\sup_{t\in [0,1]} \left|\sum_{n\in [M-m]\cap \Lambda_l} Z'_{n+m}(y)\cdot Z'_n(y) e(nt)\right|>\eta/2\right)
\end{align*}
In addition, $\Lambda_{1,m}\cup\Lambda_{2,m}=\setN$, and by Lemma~\ref{lem:prop1}, $\{Z_{m,n}: n\in \Lambda_{k,m}\}$ is independent, and 
\begin{align*}
\text{Var}(Z_{m,n})&= \sigma_n \sigma_{m+n}-\sigma_n\sigma_{n+m}^2-\sigma_n^2\sigma_{n+m}+ \sigma_n^2 \sigma_{n+m}^2\\
&\leq \sigma_n\sigma_{n+m}\leq \sigma_n^2\sim n^{-2a}.
\end{align*}

Hence, by Lemma~\ref{lem:technical}, there exists a set $E_{k,N}$ with $\lambda(E_{k,N})\leq \frac{2}{N^4}$ such that for every $y\notin E_k$
\begin{equation*}
M_{m,N}(y)\leq N^{\frac{1}{2}-a}.
\end{equation*}
Let $E_N:=\cup_{k=1}^R E_{k,N}$. Then $\lambda(E_N)\leq \dfrac{5\log N}{N^4}$, and for any $y\notin E_N$, we must have
\begin{equation*}
A_N(y)\leq 2\log^3 N\left(N^{1-2a} + N^{\frac{3}{2}-3a}\right).
\end{equation*}
Since $\sum_{N} \lambda (E_N)<\infty$, by Borel-Cantelli lemma, there exists a set $Y'\subset[0,1]$ of full Lebesgue measure such that $y\notin E_N$ eventually. Since $(1-2b)<0$ and for any $\epsilon>0$ $\lim_{x\to \infty}\dfrac{\log x}{x^\epsilon}=0$, for any $y\in Y'$, \eqref{eq:toshow1} is satisfied. 
\end{proof}

\section{Proof of Theorem  \ref{thmC}} \label{sec:ProofC}
Throughout this section, let $([0,1],\mathcal{B}([0,1]),\nu,S)$ be a measure-preserving system with rate \(\rho\) of decay of correlations between functions of bounded variation and $L^1$ observables (Definition \ref{def:correlations}), and let $(I_n)_{n\in\mathbb{N}}$ be a sequence of intervals in $[0,1]$ with $\nu(I_n)=n^{-a}$ for some $a\in(0,1/14)$. 

Let $\varphi:\mathbb{N}\to\mathbb{N}$ be an increasing function, and assume that
\[
\rho(\Delta_n)\ll \dfrac{1}{n}, \text{ where } \Delta_n:=\varphi(n+1)-\varphi(n).
\]

For each $y\in[0,1]$, we consider the sequence of \emph{$\varphi$-hitting times of $y$ to $(I_n)_n$} as

\[
\{n\in\mathbb{N} : S^{\varphi(n)}y \in I_n\}
   = \{a_1(y) < a_2(y) < \cdots\}.\]

By defining 
\begin{equation}
   X_n(y):=\mathbbm{1}_{I_n}(S^{\varphi(n)}y),\label{def:random_variables} 
\end{equation}
we can write $$\{a_1(y) < a_2(y) < \cdots\}=\{n\in \N:\ X_n(y)=1\}.$$

Despite the lack of independence between the random variables $X_n$, we still can establish a good control on the dependence by taking advantage of the decay of correlation for the system  and the growth rate of $\varphi$. In fact, if $m\geq n$, notice that  
\begin{equation}
\begin{split}    \operatorname{Cov}(X_n,X_m)&=\int_{[0,1]}\Big(\mathbbm{1}_{I_n}\circ S^{\varphi(n)}\Big)\Big(\mathbbm{1}_{I_m}\circ S^{\varphi(m)}\Big)d\nu-\int_{[0,1]} \mathbbm{1}_{I_n}d\nu\int_{[0,1]}\mathbbm{1}_{I_m}d\nu\\
    &=\int_{[0,1]} \mathbbm{1}_{I_n}\cdot \mathbbm{1}_{I_m}\circ S^{\varphi(m)-\varphi(n)}d\nu-\int_{[0,1]} \mathbbm{1}_{I_n}d\nu\int_{[0,1]}\mathbbm{1}_{I_m}d\nu.
\end{split}\label{Covariance_control}
\end{equation}
Since each interval $I_n$ is union of at most $\ell$ intervals, $\mathbbm{1}_{I_n}$ has bounded variation, and moreover, the bound is uniform for all $n$. Thus, recalling that $\rho$ denotes the rate of decay of correlation, for any $m\geq n$ we obtain that 
\begin{equation}
\begin{split}
      |\operatorname{Cov}(X_n,X_m)|&\ll \rho\Big(\varphi(m)-\varphi(n)\Big)\|\mathbbm{1}_{I_n}\|_{BV}\cdot\|\mathbbm{1}_{I_m}\circ S^{\varphi(m)}\|_{L^1(\nu)}\\
      &\ll \rho(\Delta_n)\mathbb{E}(X_m).
\end{split}
\end{equation}
In particular, notice that $$\sum_{n=1}^N\sum_{m=n+1}^N|\operatorname{Cov}(X_n,X_{n+m})|\ll \log(N)N^{1
-a}.$$
By a straightforward application of Lemma 2.3 in \cite{Donoso_Maass_Saavedra-Araya_ergodic_return_mixing:2025}, we obtain the following.
\begin{proposition} Assume the hypotheses of Theorem~\ref{thmC}, and let $(X_n)_{n\in\mathbb{N}}$ be as defined in
\eqref{def:random_variables}. Then, for Lebesgue-almost every $y\in [0,1]$, \begin{equation}
    \lim_{N\to\infty}\frac{1}{N^{1-a}}\sum_{n=1}^NX_n(y)=1.\label{prop:LLN}
\end{equation}\label{prop:LLN2}
\end{proposition}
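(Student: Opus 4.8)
The plan is to obtain Proposition~\ref{prop:LLN2} as a direct application of Lemma~\ref{LLN} to the $\{0,1\}$-valued random variables $X_n(y)=\mathbbm 1_{I_n}(S^{\varphi(n)}y)$ from~\eqref{def:random_variables}, regarded on the probability space $([0,1],\mathcal B([0,1]),\nu)$. That lemma needs only two inputs: that $\mathbb E(X_n)=c\,n^{-a}$ for some $c>0$, and that $\sum_{n=1}^N\sum_{m=n+1}^N\Cov(X_n,X_m)\ll N^{2-2a-\varepsilon}$ for some $\varepsilon>0$. Both are essentially already available from the discussion preceding the proposition, so the proof reduces to assembling them together with a little exponent bookkeeping.

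First I would note that, since $S$ preserves $\nu$, $\mathbb E(X_n)=\nu(I_n)=n^{-a}$, which gives the first hypothesis with $c=1$ and $W_N=\sum_{n\le N}n^{-a}$. Next I would invoke the covariance estimate obtained in~\eqref{Covariance_control}: using the decay of correlations of $Y$ against $L^1$, the uniform bound on $\|\mathbbm 1_{I_n}\|_{\mathrm{BV}}$ (valid because each $I_n$ is a union of at most $\ell$ intervals), the monotonicity of $\varphi$, and the hypothesis $\rho(\Delta_n)\ll 1/n$, one has $|\Cov(X_n,X_m)|\ll n^{-1}m^{-a}$ for $m\ge n$, and hence
\[
\sum_{n=1}^N\sum_{m=n+1}^N|\Cov(X_n,X_m)|\ \ll\ \sum_{n=1}^N\frac1n\sum_{m=n+1}^N m^{-a}\ \ll\ N^{1-a}\log N .
\]
Finally I would pick any $\varepsilon\in(0,1-a)$ — possible since $a\in(0,1/14)\subset(0,1)$ — and note that $(2-2a-\varepsilon)-(1-a)=1-a-\varepsilon>0$, so $N^{1-a}\log N\ll N^{2-2a-\varepsilon}$ and the second hypothesis of Lemma~\ref{LLN} holds. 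The lemma then gives $\frac1{W_N}\sum_{n=1}^N X_n(y)\to 1$ for $\nu$-almost every $y$; since $W_N$ is comparable to $N^{1-a}$ (indeed $W_N=\frac{1}{1-a}N^{1-a}(1+o(1))$), and, as in all the systems of interest, $\nu$ is equivalent to Lebesgue measure so that $\nu$-a.e.\ and $\lambda$-a.e.\ coincide, this is exactly~\eqref{prop:LLN}.

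There is, honestly, no real obstacle here: the one genuinely nontrivial step — converting the decay-of-correlations assumption into the covariance bound $|\Cov(X_n,X_m)|\ll\rho(\Delta_n)\mathbb E(X_m)$ — has already been carried out before the statement of the proposition, and Lemma~\ref{LLN} does the rest. The only thing requiring (minimal) care is ensuring that the logarithm produced by the harmonic sum $\sum_{n\le N}1/n$ does not break the $N^{2-2a-\varepsilon}$ threshold, which is automatic because $a$ is bounded well away from $1$.
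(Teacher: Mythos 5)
Your proposal is correct and follows the paper's argument essentially verbatim: the paper also derives the covariance bound $\sum_{n=1}^N\sum_{m=n+1}^N|\Cov(X_n,X_m)|\ll N^{1-a}\log N$ directly from the decay-of-correlations estimate in \eqref{Covariance_control} together with $\rho(\Delta_n)\ll 1/n$, and then simply invokes Lemma~\ref{LLN}. What you add is a full unwinding of the estimate $|\Cov(X_n,X_m)|\ll n^{-1}m^{-a}$ for $m\geq n$ and the explicit choice of $\varepsilon\in(0,1-a)$, steps the paper leaves to the reader with the phrase ``straightforward application.''

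Two small remarks. First, as you yourself compute, Lemma~\ref{LLN} gives $\frac{1}{W_N}\sum_{n\leq N}X_n\to 1$ with $W_N=\sum_{n\leq N}n^{-a}=\frac{1}{1-a}N^{1-a}(1+o(1))$, so dividing by $N^{1-a}$ actually produces the limit $\frac{1}{1-a}$, not $1$; calling this ``exactly'' \eqref{prop:LLN} is therefore slightly too strong, though the discrepancy is a constant and is present in the paper's own statement (only the order of magnitude of $\sum X_n$ is ever used downstream, so nothing breaks). Second, the conclusion you prove is $\nu$-a.e., which is all that Lemma~\ref{LLN} and the hypotheses of Theorem~\ref{thmC} deliver; the passage to ``Lebesgue-a.e.''\ via the assertion that $\nu$ is equivalent to $\lambda$ is not actually a hypothesis of Theorem~\ref{thmC}, and again the slip is the paper's, not yours --- but it would be cleaner to state the conclusion with respect to $\nu$ and remark that the two notions coincide in the intended applications.
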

The main strategy in the proof of Theorem \ref{thmC} is motivated by the semi-random case for independent random variables studied by Frantzikinakis, Wierdl and Lesigne \cite{FLW}. Specifically, we make use of the following reduction.

\begin{proposition}[Cf. Proposition~2.4 in \cite{FLW}]\label{prop:reduction}
Assume the hypotheses of Theorem~\ref{thmC}, and let $(X_n)_{n\in\mathbb{N}}$ be as defined in
\eqref{def:random_variables}.

Suppose that for Lebesgue-almost every $y\in[0,1]$, the following holds: for every probability space
$(X,\mathcal{X},\mu)$, every pair of commuting measure-preserving transformations
$T_1,T_2\colon X\to X$, and every $f_1,f_2\in L^{\infty}(\mu)$, we have
\begin{equation}\label{prop:reduction_equation}
\sum_{k=1}^{\infty}
\left\|
\frac{1}{W_{[\gamma^k]}}
\sum_{n=1}^{[\gamma^k]}
Z_n(y)\cdot T_1^{X_1(y)+\cdots+X_n(y)}f_1\cdot T_2^n f_2
\right\|_{L^2(\mu)}^{2}
<+\infty,
\end{equation}
where $Z_n:=X_n-\sigma_n$.
Then, the conclusion of Theorem~\ref{Semi_random} holds.
\end{proposition}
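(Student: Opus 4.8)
The plan is to follow \cite[Proposition~2.4]{FLW}, with the independence of the $X_n$ replaced by the covariance bound \eqref{Covariance_control} and its consequences.

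\textbf{Change of variables and lacunary reduction.} As in \eqref{equivalence}, reindexing $\sum_{n\le K}f_1(T_1^{n}x)f_2(T_2^{a_n(y)}x)$ by the hitting positions $m=a_n(y)$ (so that $n=S_m(y):=X_1(y)+\dots+X_m(y)$) identifies the average of Theorem~\ref{Semi_random} with $\tfrac{W_M}{S_M(y)}\,B_M(x,y)$, $M=a_K(y)$, where
\[
B_N(x,y):=\frac{1}{W_N}\sum_{n=1}^{N}X_n(y)\,f_1\!\bigl(T_1^{S_n(y)}x\bigr)\,f_2\!\bigl(T_2^{n}x\bigr).
\]
Since $S_M(y)\sim W_M$ by Proposition~\ref{prop:LLN2}, it suffices to show that $B_N(x,y)\to\mathbb{E}(f_1|\mathcal{I}(T_1))(x)\,\mathbb{E}(f_2|\mathcal{I}(T_2))(x)$ for $\mu$-a.e.\ $x$ and $\lambda$-a.e.\ $y$. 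Moreover, with $N_k=[\gamma^k]$ and $N_k\le M<N_{k+1}$, $\bigl|B_M-\tfrac{W_{N_k}}{W_M}B_{N_k}\bigr|\le\|f_1\|_\infty\|f_2\|_\infty\bigl(S_{N_{k+1}}(y)-S_{N_k}(y)\bigr)/W_{N_k}$, which by Proposition~\ref{prop:LLN2} and $W_N\sim cN^{1-a}$ tends to $\|f_1\|_\infty\|f_2\|_\infty(\gamma^{1-a}-1)$; letting $\gamma\downarrow1$ (cf.\ Lemma~\ref{lacunary_trick}) reduces matters to proving a.e.\ convergence of $B_N$ along each lacunary sequence $N=[\gamma^k]$.

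\textbf{Bilinear splitting; the soft terms.} Writing $f_i=\mathbb{E}(f_i|\mathcal{I}(T_i))+f_i'$ with $f_i'$ orthogonal to the $T_i$-invariant functions, expand $B_N$ into four terms. The key observation is that $X_n(y)=1$ exactly when $n=a_k(y)$, in which case $S_n(y)=k$, so $\sum_{n\le N}X_n(y)\,g(T_1^{S_n(y)}x)=\sum_{k\le S_N(y)}g(T_1^{k}x)$ for any $g$. Hence the $\mathbb{E}(f_1|\mathcal{I}(T_1))\otimes\mathbb{E}(f_2|\mathcal{I}(T_2))$ term equals $\mathbb{E}(f_1|\mathcal{I}(T_1))(x)\mathbb{E}(f_2|\mathcal{I}(T_2))(x)\cdot\tfrac{S_N(y)}{W_N}$, which tends to the desired limit; the $f_1'\otimes\mathbb{E}(f_2|\mathcal{I}(T_2))$ term equals $\mathbb{E}(f_2|\mathcal{I}(T_2))(x)\cdot\tfrac{1}{W_N}\sum_{k\le S_N(y)}f_1'(T_1^{k}x)$, which tends to $0$ by Birkhoff's theorem; and the $\mathbb{E}(f_1|\mathcal{I}(T_1))\otimes f_2'$ term equals $\mathbb{E}(f_1|\mathcal{I}(T_1))(x)\cdot\tfrac{1}{W_N}\sum_{n\le N}X_n(y)f_2'(T_2^{n}x)$, and writing $X_n=\sigma_n+Z_n$ with $\sigma_n=\mathbb{E}(X_n)=n^{-a}$, the $\sigma_n$-part is a $\sigma_n$-weighted Birkhoff average of $f_2'$ along $T_2$ (converging a.e.\ to $\mathbb{E}(f_2'|\mathcal{I}(T_2))=0$, by Birkhoff plus summation by parts, since $n\mapsto n^{-a}$ is monotone and regularly varying) while the $Z_n$-part is the hypothesis \eqref{prop:reduction_equation} with $f_1$ there taken to be the constant $1$, hence $\to0$ a.e.\ along $N=[\gamma^k]$ by Borel--Cantelli.

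\textbf{The remaining term and the main obstacle.} Only the $f_1'\otimes f_2'$ term remains; splitting $X_n=\sigma_n+Z_n$ once more, its $Z_n$-part is again a case of \eqref{prop:reduction_equation}, so it remains to show that
\[
C_N(x,y):=\frac{1}{W_N}\sum_{n=1}^{N}\sigma_n\,f_1'\!\bigl(T_1^{S_n(y)}x\bigr)\,f_2'\!\bigl(T_2^{n}x\bigr)\longrightarrow0\quad\text{a.e.,}
\]
and, for the a.e.\ statement, that $\sum_k\|C_{[\gamma^k]}(\cdot,y)\|_{L^2(\mu)}^2<\infty$ for $\lambda$-a.e.\ $y$, after which Borel--Cantelli together with the oscillation bound of the first step conclude. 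To bound $\|C_N\|_{L^2(\mu)}$ one runs van der Corput's inequality (Lemma~\ref{vdc}) in $n$ with a window $M\to\infty$: after normalizing by $W_N^2$, the diagonal is $\ll1/M$ (using $\sum_{n\le N}\sigma_n^2\ll N^{1-2a}$ for $a<1/2$ and $W_N^2\sim N^{2(1-a)}$), while in the off-diagonal terms one substitutes $x\mapsto T_2^{-n}x$ to strip the $n$-dependence from the $f_2'$-factors, writes $S_{n+m}(y)=S_n(y)+\Delta_{n,m}(y)$ with $\Delta_{n,m}(y):=X_{n+1}(y)+\dots+X_{n+m}(y)\in\{0,\dots,m\}$, and applies Cauchy--Schwarz in $x$ to remove the $f_1'$-factor, leaving exponential sums in the $X_j$ whose covariances satisfy \eqref{Covariance_control}; Lemma~\ref{lem:technical}/Corollary~\ref{lem:Bourgain}, used as in the proof of Theorem~\ref{thmB}, then provides the square-summable bound, with $a<1/14$ being exactly what makes the accumulated error terms summable. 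This van der Corput estimate is the main obstacle: the factor $f_1'(T_1^{S_n(y)}x)$ depends on $y$ through the random, sublinearly growing exponent $S_n(y)$ (equivalently, through the $y$-dependent transformations $T_1^{S_n(y)}T_2^{-n}$), so after the substitution and Cauchy--Schwarz one must control the joint fluctuations of the increments $\Delta_{n,m}(y)$ while retaining enough cancellation from the orthogonality of $f_2'$; this is where the quantitative near-independence of the $X_n$ furnished by the decay of correlations, and the hypothesis $a<1/14$, are used. The rest of the argument is soft by comparison.
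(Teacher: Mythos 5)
The paper does not actually prove this proposition: it cites \cite[Proposition~2.4]{FLW} and asserts that, once the law of large numbers (Proposition~\ref{prop:LLN2}) is known for the non-independent $X_n$'s, ``the same argument applies without modification'' and so ``we omit the details''. Your proposal, by contrast, attempts a full reconstruction. The change of variables, the lacunary step, and the treatment of the three ``soft'' pieces of the bilinear decomposition $f_i=\mathbb{E}(f_i\mid\mathcal{I}(T_i))+f_i'$ are in the right spirit and consistent with what one expects from \cite{FLW}: the collapsing identity $\sum_{n\le N}X_n(y)\,g(T_1^{S_n(y)}x)=\sum_{k\le S_N(y)}g(T_1^kx)$, the LLN, Birkhoff, and the hypothesis \eqref{prop:reduction_equation} with $f_1\equiv 1$ do handle those terms.

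The problem is your treatment of the remaining $\sigma_n$-weighted $f_1'\otimes f_2'$ term
\[
C_N(x,y)=\frac{1}{W_N}\sum_{n=1}^N\sigma_n\,f_1'\bigl(T_1^{S_n(y)}x\bigr)\,f_2'\bigl(T_2^nx\bigr).
\]
You propose to bound $\|C_N\|_{L^2(\mu)}$ by a van der Corput argument and then invoke Lemma~\ref{lem:technical}/Corollary~\ref{lem:Bourgain}, ``with $a<1/14$ being exactly what makes the accumulated error terms summable''. This is where the route diverges from the paper's structure and where the gap lies. First, the object that survives after $x\mapsto T_2^{-n}x$ and the splitting $S_{n+h}(y)=S_n(y)+\Delta_{n,h}(y)$ is not an exponential sum in the $X_j$'s: to strip the $y$-dependence from the $T_1$-exponent you have to condition on the level sets $\{\Delta_{n,h}(y)=j\}$, producing sums of the form $\sum_n\sigma_n\sigma_{n+h}\mathbbm{1}_{\{\Delta_{n,h}(y)=j\}}$ multiplied against the deterministic correlations $\int T_1^jf_1'\,\overline{f_1'}\,T_2^hf_2'\,\overline{f_2'}\,d\mu$. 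Lemma~\ref{lem:technical} and Corollary~\ref{lem:Bourgain} control centered products like $Z_{n+m}Z_n$; they are not designed to handle these indicator weights, and a crude bound on the integrals gives a contribution of size $O(1)$ — not small — so some further cancellation argument is needed and is not supplied. Second, and more telling structurally: in the paper, the decay of correlations and the restriction $a<1/14$ are used \emph{only} in establishing the hypothesis \eqref{prop:reduction_equation} (the final lemma of \cref{sec:ProofC}); the reduction (this proposition) is asserted to be soft modulo the LLN. Your reconstruction would use the hard hypotheses a second time inside the reduction itself. That mismatch is a strong indication that you have not found the argument \cite{FLW} actually use for the $\sigma_n$-weighted term, and the step as written cannot be accepted as a proof.
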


In this way, the proof of Theorem~\ref{thmC} is reduced to showing that
\eqref{prop:reduction_equation} holds. It is worth noting that \cite[Proposition~2.4]{FLW} was
proved under the assumption that the random variables $(X_n)_{n\in\mathbb{N}}$ are independent and
satisfy $\mathbb{P}(X_n=1)=n^{-a}$ for some $a\in(0,1/14)$.  
We emphasize that, provided condition~(\ref{prop:LLN}) holds, independence is not required in the
original proof, and the same argument applies without modification in our setting; hence we omit
the details.

Considering this reduction, we devote the rest of the section in showing \eqref{prop:reduction_equation}. We begin by establishing bounds for the products $X_{n+m}X_n$.

\begin{proposition}
Assume the hypotheses of Theorem~\ref{thmC}, let $(X_n)_{n\in\mathbb{N}}$ be as defined in
\eqref{def:random_variables}, and let $b\in (a,1/14)$. Then, for almost every $y\in [0,1],$
\[
\sum_{m=1}^{\lfloor N^b\rfloor}\sum_{n=1}^N X_{n+m}(y)X_n(y) \ll N^{b+1-2a}.
\]\label{prop:interaction}
\end{proposition}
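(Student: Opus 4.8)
The plan is to estimate the expectation of the double sum and then invoke a Borel--Cantelli argument along a suitable subsequence, upgrading to all $N$ by monotonicity. First I would compute
\[
\mathbb{E}\!\left(\sum_{m=1}^{\lfloor N^b\rfloor}\sum_{n=1}^{N} X_{n+m}(y)X_n(y)\right)
=\sum_{m=1}^{\lfloor N^b\rfloor}\sum_{n=1}^{N}\Bigl(\operatorname{Cov}(X_n,X_{n+m})+\sigma_n\sigma_{n+m}\Bigr).
\]
The main term $\sum_{m,n}\sigma_n\sigma_{n+m}$ is of order $\sum_{m\le N^b}\sum_{n\le N} n^{-a}(n+m)^{-a}\ll N^b\cdot N^{1-2a}=N^{b+1-2a}$, using $a<1/14<1$ so that $\sum_{n\le N}n^{-a}\ll N^{1-a}$ and $(n+m)^{-a}\le n^{-a}$; more carefully one should note $\sum_{n\le N} n^{-2a}\ll N^{1-2a}$ since $2a<1$, which gives exactly the claimed order. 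For the covariance contribution, the bound $|\operatorname{Cov}(X_n,X_{n+m})|\ll \rho(\Delta_n)\sigma_{n+m}\ll \tfrac{1}{n}\cdot n^{-a}$ derived earlier in this section (via decay of correlations and $\rho(\Delta_n)\ll 1/n$) yields
\[
\sum_{m=1}^{\lfloor N^b\rfloor}\sum_{n=1}^{N}|\operatorname{Cov}(X_n,X_{n+m})|
\ll N^b\sum_{n=1}^{N}n^{-1-a}\ll N^b,
\]
which is negligible compared to $N^{b+1-2a}$ since $1-2a>0$. Hence the expectation is $\ll N^{b+1-2a}$.

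Next I would pass to a lacunary subsequence to make the error summable. Fix $\gamma>1$ and set $N_k=\lfloor \gamma^k\rfloor$. By Markov's inequality and the expectation bound, for any $\delta>0$,
\[
\lambda\!\left(y:\ \sum_{m=1}^{\lfloor N_k^{b}\rfloor}\sum_{n=1}^{N_k} X_{n+m}(y)X_n(y)\ \ge\ N_k^{b+1-2a+\delta}\right)\ \ll\ N_k^{-\delta}\ \ll\ \gamma^{-k\delta},
\]
which is summable in $k$. By Borel--Cantelli, for almost every $y$ there is $k_0(y)$ with
\[
\sum_{m=1}^{\lfloor N_k^{b}\rfloor}\sum_{n=1}^{N_k} X_{n+m}(y)X_n(y)\ \le\ N_k^{b+1-2a+\delta}\qquad\text{for all } k\ge k_0(y).
\]
Now for arbitrary $N$, choose $k$ with $N_k\le N< N_{k+1}$. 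Since all summands $X_{n+m}X_n$ are nonnegative, the double sum over $\{n\le N,\ m\le N^b\}$ is at most the double sum over $\{n\le N_{k+1},\ m\le N_{k+1}^b\}$, which is $\le N_{k+1}^{b+1-2a+\delta}\le (\gamma N)^{b+1-2a+\delta}\ll_\gamma N^{b+1-2a+\delta}$. As $\delta>0$ was arbitrary, a diagonal argument over $\delta=1/j$ (intersecting the corresponding full-measure sets) gives, for almost every $y$, the bound $\sum_{m\le N^b}\sum_{n\le N}X_{n+m}(y)X_n(y)\ll_y N^{b+1-2a}$ up to an arbitrarily small power loss.

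To remove the $N^{\delta}$ and obtain the sharp exponent $b+1-2a$ as stated, I would instead replace the crude Markov step by a second-moment (Chebyshev) estimate: bound $\operatorname{Var}\bigl(\sum_{m,n}X_{n+m}X_n\bigr)$ by expanding into fourth-order correlations $\operatorname{Cov}(X_{n+m}X_n,\,X_{n'+m'}X_{n'})$ and using the decay-of-correlations bound repeatedly (each such covariance is controlled by $\rho$ of the smallest relevant gap times a product of three $\sigma$'s, after applying the BV-vs-$L^1$ estimate to the innermost return-time indicator). One finds the variance is of strictly smaller order than $(N^{b+1-2a})^2$, so Chebyshev along $N_k=\lfloor\gamma^k\rfloor$ with threshold a large constant multiple of the mean is summable, and the same nonnegativity/monotonicity sandwiching as above then yields $\ll_y N^{b+1-2a}$ for all $N$. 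The main obstacle is this fourth-moment bookkeeping: one must carefully track which of the four indices is largest and split the sum accordingly so that a factor $\rho(\Delta_n)\ll 1/n$ (or $1/n'$) can always be extracted, exactly as in the covariance computation~\eqref{Covariance_control}; once the indices are ordered this is routine but notation-heavy, which is why I would organize it as a single lemma on ordered four-index sums of products of the $X_n$'s.
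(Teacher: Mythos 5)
Your second‐moment (Chebyshev) approach is exactly what the paper does, and your first‐moment Markov approach, as you correctly flag, cannot give the sharp exponent: diagonalizing over $\delta=1/j$ produces constants $C_j(y)$ that may blow up, so you only get $\ll_y N^{b+1-2a+\delta}$ for each fixed $\delta$, not $\ll_y N^{b+1-2a}$. Concretely, the paper sets $S_N=\sum_{m\le N^b}\sum_{n\le N}\bigl(X_{n+m}X_n-\sigma_n\sigma_{n+m}\bigr)$ (note: it centers at the \emph{product of means} $\sigma_n\sigma_{n+m}$, not at the true mean $\mathbb{E}(X_{n+m}X_n)$, which is the same up to an $O(N^b)$ error) and bounds $\mathbb{E}(S_N^2)$ by expanding into the four‐index sum and peeling covariances off one index at a time: for $n<n'$, $\mathbb{E}\bigl[(X_{n+m}X_n-\sigma_n\sigma_{n+m})(X_{n'+m'}X_{n'}-\sigma_{n'}\sigma_{n'+m'})\bigr]$ is controlled by $|\Cov(X_n,X_{n+m}X_{n'}X_{n'+m'})|+\sigma_{n'}\sigma_{n'+m'}|\Cov(X_n,X_{n+m})|+\sigma_n|\Cov(X_{n+m},X_{n'}X_{n'+m'})|$, each term $\ll 1/n$ or $1/(n+m)$ by the BV-vs-$L^1$ decay applied at the smallest‐index factor, while the diagonal range $n'\in[n,n+m]$ is bounded trivially. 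This yields $\mathbb{E}(S_N^2)\ll N^{1+2b}\log N+N^{1+3b}$, hence $\mathbb{E}\bigl((N^{-(b+1-2a)}S_N)^2\bigr)\ll N^{-1+4a}\log N+N^{-1+b+4a}$, which is summable along lacunary $N_k$ since $a,b<1/14$; Borel--Cantelli plus nonnegativity sandwiching (as in your first approach) finishes. So your outline is the right one, but the fourth‐moment bookkeeping — ordering the four indices, isolating the smallest, and always being able to extract the $\rho(\Delta_n)\ll 1/n$ factor — is precisely where the content of the proof lies, and the paper carries it out explicitly rather than delegating it to a lemma; your phrase ``times a product of three $\sigma$'s'' is slightly sharper than what the paper actually needs (it just bounds the remaining expectation by $1$), but either suffices.
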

\begin{proof}
    Let us consider  \[S_N:=\sum_{m=1}^{\lfloor N^b\rfloor}\sum_{n=1}^{N}(X_{n+m}X_n-\mathbb{E}(X_{n+m})\mathbb{E}(X_n))\]
and
\[A_N:=N^{-c}S_N \ \text{ where }  \ c:=b+1-2a.\]
Since $\mathbb{E}(X_{n+m})\ll \mathbb{E}(X_n)$, we have
\[N^{-c}\sum_{m=1}^{\lfloor N^b\rfloor}\sum_{n=1}^{N}\mathbb{E}(X_{n+m})\mathbb{E}(X_n)\ll N^{-c}\sum_{m=1}^{\lfloor N^b\rfloor}\sum_{n=1}^{N}n^{-2a}\ll 1.\]
So it suffices to show that almost every $y\in [0,1]$, $$ \lim_{N\to \infty} A_N(y)=0.$$

Expanding $S_N^2$, we can write
\begin{align*}
    S_N^2=\sum_{m=1}^{\lfloor N^b\rfloor}\sum_{n=1}^{N}\sum_{m'=1}^{\lfloor N^b\rfloor}\sum_{n'=1}^{N} &\Big(X_{n+m}X_{n}X_{n'+m'}X_{n'}+\sigma_{n+m}\sigma_n\sigma_{n'+m'}\sigma_{n'}\\
    &-X_{n+m}X_n\sigma_{n'+m'}\sigma_{n'}-X_{n'+m'}X_{n'}\sigma_{n+m}\sigma_{n}\Big).
\end{align*}
Taking expectation and rearranging terms, we can write 
\begin{equation}
    \begin{split}
        \mathbb{E}(S_N^2)&\ll \sum_{m=1}^{\lfloor N^b\rfloor}\sum_{n=1}^N\sum_{m'=1}^{\lfloor N^b\rfloor}\sum_{n'=1}^N \Bigg(|\operatorname{Cov}(X_n,X_{n+m}X_{n'}X_{n'+m'})|\\
        &+\sigma_{n'}\sigma_{n'+m'}|\operatorname{Cov}(X_{n},X_{n+m})|
        +\sigma_n|\operatorname{Cov}(X_{n+m},X_{n'}X_{n'+m'})|\Bigg).
    \end{split}
\end{equation}

Let $n'>n$. If we proceed similarly to (\ref{Covariance_control}) and we denote $$h_{n,m,n',m'}=\mathbbm{1}_{I_{n+m}}\circ S^{\varphi(n+m)-\varphi(n+1)}\cdot \mathbbm{1}_{I_{n'+m'}}\circ S^{\varphi(n'+m')-\varphi(n+1)}\cdot \mathbbm{1}_{I_{n'}}\circ S^{\varphi(n')-\varphi(n+1)},$$ we have that
\begin{equation*}
    \begin{split}
        |\Cov(X_n,X_{n+m}X_{n'}X_{n'+m'})|=&\Big|\int_{[0,1]} \mathbbm{1}_{I_n}\cdot h_{n,m,n',m'}\circ S^{\varphi(n+1)-\varphi(n)}d\nu
        \\
        &-\int_{[0,1]}\mathbbm{1}_{I_n}d\nu\int_{[0,1]} h_{n,m,n',m'}d\nu\Big|\\
        &\ll \rho\Big(\varphi(n+1)-\varphi(n)\Big) \mathbb{E}(X_{n'}X_{n+m}X_{n'+n'})\\
        &\ll \frac{1}{n}.
    \end{split}
\end{equation*}
With a similar argument, we obtain
 
$$|\Cov(X_{n},X_{n+m})|\ll \frac{1}{n},$$ and for $n+m<n'$,
$$|\Cov(X_{n+m},X_{n'}X_{n'+m'})|\ll \dfrac{1}{n+m}.$$

Therefore, 
\begin{align*}
    \mathbb{E}(S_N^2)&\ll \sum_{m=1}^{\lfloor N^b\rfloor}\sum_{n=1}^N\sum_{m'=1}^{\lfloor N^b\rfloor}\sum_{n'=n+m+1}^{N} n^{-1}+\sum_{m=1}^{\lfloor N^b\rfloor}\sum_{n=1}^N\sum_{m'=1}^{\lfloor N^b\rfloor}\sum_{n'=n}^{n+m} 1\\
    &\ll N^{1+2b}\log(N)+N^{1+3b}.\\
\end{align*}
Note that $N^{-2c}=N^{-2-2b+4a},$ thus
 $$\mathbb{E}(A_N^2)\ll N^{1+2b-2c}+N^{1+3b-2c}=N^{-1+4a}+N^{-1+b+4a}.$$
Since $-1+4a<0$ and $-1 + b + 4a < 0$ , a classical application of Markov's inequality, the Borel-Cantelli lemma, and the lacunary trick shows that $A_N^2$ converges to $0$ almost surely, and hence $A_N$ also converges to $0$.

\end{proof}

\begin{corollary}
Assume the hypotheses of Theorem~\ref{thmC}, let $(X_n)_{n\in\mathbb{N}}$ be as defined in
\eqref{def:random_variables}, and let $b\in (a,1/14)$. Then, for Lebesgue-almost every $y\in [0,1],$
 \[\sum_{m=1}^{\lfloor N^b\rfloor}\sum_{n=1}^{N} Z^2_{n+m}(y)Z_n^2(y) \ll N^{b+1-2a},\]
 where $Z_n=X_n-\sigma_n.$\label{cor:C_1}
 \end{corollary}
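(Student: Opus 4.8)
The goal is to deduce Corollary~\ref{cor:C_1} from Proposition~\ref{prop:interaction} by expanding $Z_n = X_n - \sigma_n$ and controlling each of the resulting terms. The plan is as follows. First I would expand
\[
Z_{n+m}^2 Z_n^2 = (X_{n+m}-\sigma_{n+m})^2 (X_n-\sigma_n)^2,
\]
and observe that, since $X_n$ and $X_{n+m}$ are $\{0,1\}$-valued, we have $X_n^2 = X_n$ and $(X_{n+m}-\sigma_{n+m})^2 \ll X_{n+m} + \sigma_{n+m}^2 \ll X_{n+m} + \sigma_{n+m}$ (using $\sigma_{n+m}\le 1$), and similarly for the other factor. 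Multiplying out, $Z_{n+m}^2 Z_n^2$ is bounded by a sum of terms of the shape $X_{n+m}X_n$, $\sigma_{n+m}X_n$, $X_{n+m}\sigma_n$, and $\sigma_{n+m}\sigma_n$, each with a bounded coefficient.

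Next I would bound the double sum over $m\le \lfloor N^b\rfloor$, $n\le N$ of each of these four families of terms. The term $\sum_m\sum_n X_{n+m}X_n$ is exactly what Proposition~\ref{prop:interaction} handles, giving $\ll N^{b+1-2a}$ for a.e.\ $y$. For $\sum_m\sum_n \sigma_{n+m}X_n \ll \sum_m\sum_n n^{-a} X_n$, I would first sum in $m$ to get $\ll N^b \sum_{n=1}^N n^{-a}X_n$, and then use Proposition~\ref{prop:LLN2} (the law of large numbers, $\sum_{n=1}^N X_n \sim N^{1-a}$, together with partial summation or the crude bound $n^{-a}\le 1$) to see that $\sum_{n=1}^N n^{-a} X_n \ll N^{1-2a}$ for a.e.\ $y$ — indeed by Abel summation against the a.e.\ asymptotic $\sum_{n\le k}X_n \ll k^{1-a}$ one gets $\sum_{n\le N} n^{-a}X_n \ll N^{1-a}\cdot N^{-a} = N^{1-2a}$ up to the usual logarithmic-free constants. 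Hence this family contributes $\ll N^{b}\cdot N^{1-2a} = N^{b+1-2a}$. The term $\sum_m\sum_n X_{n+m}\sigma_n$ is handled the same way after noting $\sigma_n = n^{-a}$ and reindexing. Finally $\sum_m\sum_n \sigma_{n+m}\sigma_n \ll \sum_{m\le N^b}\sum_{n\le N} n^{-2a} \ll N^b\cdot N^{1-2a} = N^{b+1-2a}$ deterministically. Summing the four contributions gives the claimed bound $\ll N^{b+1-2a}$ for Lebesgue-almost every $y$.

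The main (modest) obstacle is the $\sigma X$ cross terms: one must be slightly careful that the relevant a.e.\ statement is not merely $\frac{1}{N^{1-a}}\sum_{n\le N}X_n(y)\to 1$ at the single scale $N$, but the uniform-in-$k$ bound $\sum_{n\le k}X_n(y)\ll k^{1-a}$ for all $k\le N$, which is needed to run Abel summation against the weight $n^{-a}$. This follows from Proposition~\ref{prop:LLN2} by a routine argument (the partial sums are eventually within a constant factor of $k^{1-a}$, and finitely many initial terms are absorbed into the constant), so no new input beyond what is already established is required. Everything else is bookkeeping on nonnegative quantities, so no cancellation or delicate estimate is involved.
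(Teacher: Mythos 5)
Your argument is correct, and the overall strategy (expand $Z_n^2 Z_{n+m}^2$, handle the $X_n X_{n+m}$ piece via Proposition~\ref{prop:interaction}, and bound the rest) is the same as the paper's. However, you introduce an unnecessary complication. You first correctly observe that $(X_{n+m}-\sigma_{n+m})^2 = X_{n+m}(1-2\sigma_{n+m}) + \sigma_{n+m}^2 \ll X_{n+m} + \sigma_{n+m}^2$, but then weaken this to $X_{n+m}+\sigma_{n+m}$. Had you kept the $\sigma^2$, you would get
\[
Z_n^2 Z_{n+m}^2 \ll (X_n+\sigma_n^2)(X_{n+m}+\sigma_{n+m}^2) \ll X_n X_{n+m} + \sigma_n^2,
\]
using only $\sigma_{n+m}\le\sigma_n\le 1$ and $X\le 1$, and the remainder term is then handled by the purely deterministic sum $\sum_{m\le N^b}\sum_{n\le N}\sigma_n^2 \ll N^{b+1-2a}$. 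This is exactly what the paper does: an exact expansion of $Z_n^2 Z_{n+m}^2 = (X_n - 2\sigma_n X_n + \sigma_n^2)(X_{n+m}-2\sigma_{n+m}X_{n+m}+\sigma_{n+m}^2)$ shows every term is either a bounded multiple of $X_n X_{n+m}$ or is dominated by $\sigma_n^2$, so no almost-sure input beyond Proposition~\ref{prop:interaction} is required. By contrast, your weakening to $X+\sigma$ creates linear cross terms of the form $\sigma_n X_{n+m}$ and $\sigma_{n+m}X_n$, which you then must control with the Law of Large Numbers (Proposition~\ref{prop:LLN2}) plus an Abel-summation bound $\sum_{n\le N} n^{-a}X_n(y)\ll N^{1-2a}$ a.e.\ and a reindexing for the $X_{n+m}\sigma_n$ term. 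These extra steps are all correct (the Abel summation does give $N^{1-2a}$ with no log loss, and the reindexing contributes a harmless $N^{2b}\ll N^{b+1-2a}$), but they are avoidable: keeping the $\sigma^2$ bound you already had makes the corollary a one-line reduction.
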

\begin{proof}
By definition, we have
 \[Z_n^2=X_n-2\sigma_nX_n+\sigma_n^2.\]
 Therefore, since $X_n\in \{0,1\}$,
 \begin{align*}
     Z_n^2Z_{n+m}^2&=X_{n}X_{n+m}-2X_nX_{n+m}\sigma_{n+m}+X_n\sigma_{n+m}^2\\
     &-2\sigma_nX_nX_{n+m}+4X_nX_{n+m}\sigma_n\sigma_{n+m}-2X_n\sigma_n\sigma_{n+m}^2\\
     &+X_{n+m}\sigma_n^2-2\sigma_{n}^2\sigma_{n+m}X_{n+m}+\sigma_{n}^2\sigma_{n+m}^2.\\
     &\leq X_nX_{n+m}(1-2\sigma_{n+m}-2\sigma_n+4\sigma_{n}\sigma_{n+m})+\sigma_n^2
 \end{align*}
By noticing that
 \[\sum_{m=1}^{\lfloor N^b\rfloor}\sum_{n=1}^{N}\sigma_n^2\ll N^{b+1-2a},\]
 the conclusion follows from Proposition \ref{prop:interaction}.
 \end{proof}
The last ingredient we need to establish \eqref{prop:reduction_equation} is the following lemma. With this result, Theorem \ref{thmC} follows directly from Proposition \ref{prop:reduction}.

 \begin{lemma}  Let $([0,1],\mathcal{B}([0,1]),\nu,S)$ be a measure-preserving system with rate $\rho$ of decay of correlation for functions with bounded variation against $L^1$.  Let $I_n\subseteq [0,1]$ be  a sequence of intervals such that $\sigma_n:=\nu(I_n)=n^{-a}$ for some $a\in (0,1/14)$. Let $\varphi:\mathbb{N}\to \mathbb{N}$ be an increasing function such that $$\rho\Big(\varphi(n+1)-\varphi(n)\Big)\ll  \dfrac{1}{n},$$ and  $(X_n)_n$ be as defined in (\ref{def:random_variables}).
 
  For almost every $y\in [0,1]$, it holds: For every $\gamma>1$, every probability space $(X,\mathcal{X},\mu)$, every pair of commutative measure-preserving transformations $T_1,T_2:X\to X$ and every $f_1,f_2\in L^{\infty}(\mu)$, we have that 
 \begin{equation}
    \sum_{N=1}^{\infty}\left \| \dfrac{1}{W_{\lfloor\gamma^N\rfloor}}\sum_{n=1}^{\lfloor\gamma^N\rfloor} Z_n(y)\cdot T_1^{X_1(y)\cdots+X_n(y)}f_1\cdot T_2^nf_2\right \|_{L^2}^{2}<+\infty,
\end{equation}
where $Z_n:=X_n-\sigma_n$ and $W_N:=\sum_{n=1}^N \sigma_n$.
 \end{lemma}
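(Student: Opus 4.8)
The goal is to bound $\sum_{N}\|\frac{1}{W_{\lfloor\gamma^N\rfloor}}\sum_{n\le\lfloor\gamma^N\rfloor} Z_n(y)\,T_1^{S_n(y)}f_1\cdot T_2^n f_2\|_{L^2}^2$, where $S_n(y):=X_1(y)+\dots+X_n(y)$. Fix $y$ outside a null set (to be specified). Write $M:=\lfloor\gamma^N\rfloor$; since $W_M\sim M^{1-a}$, it suffices to show $\sum_N M^{-2(1-a)}B_M(y)<\infty$ where $B_M(y):=\|\sum_{n\le M}Z_n(y) T_1^{S_n(y)}f_1\cdot T_2^n f_2\|_{L^2}^2$. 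The plan is to apply van der Corput's inequality (Lemma~\ref{vdc}) with $v_n:=Z_n(y)\,T_1^{S_n(y)}f_1\cdot T_2^n f_2$ in $L^2(\mu)$ and the cutoff $h$ ranging in $\{1,\dots,\lfloor M^b\rfloor\}$ for a suitable $b\in(a,1/14)$ to be fixed later. The diagonal term contributes $\frac{2M}{\lfloor M^b\rfloor}\sum_{n\le M}|Z_n(y)|^2\,\|f_1\|_\infty^2\|f_2\|_\infty^2\ll M^{2-b-a}$ (using $|Z_n(y)|^2\le X_n(y)+\sigma_n^2$ and Proposition~\ref{prop:LLN2}), which after division by $M^{2(1-a)}$ gives $M^{-b+a}$ — not yet summable over a lacunary sequence, so we'll need $b$ genuinely larger than $a$ and will in fact revisit the exponents.

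The heart is the off-diagonal term: for each $h\le \lfloor M^b\rfloor$ we must control $|\sum_{n\le M-h}\langle v_{n+h},v_n\rangle_{L^2(\mu)}|$. Expanding the inner product, $\langle v_{n+h},v_n\rangle = Z_{n+h}(y)Z_n(y)\int_X T_1^{S_{n+h}(y)}f_1\cdot\overline{T_1^{S_n(y)}f_1}\cdot T_2^{n+h}f_2\cdot\overline{T_2^n f_2}\,d\mu$. Since $S_{n+h}(y)-S_n(y)=X_{n+1}(y)+\dots+X_{n+h}(y)\in\{0,1,\dots,h\}$, we split the sum over $n$ according to the value $j:=S_{n+h}(y)-S_n(y)$ of this "return-time increment." After applying $T_1^{-S_n(y)}$ inside the integral and using commutativity, the integral becomes $\langle g_{h,j},\, (T_1^{-1}T_2)^n\, g'_h\rangle$ for bounded functions $g_{h,j},g'_h$ depending only on $h,j$ — i.e.\ an exponential-sum-type expression in the "phase" $n$ against a fixed unitary $T_1^{-1}T_2$. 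This is exactly the structure that Lemma~\ref{lem:technical} / Corollary~\ref{lem:Bourgain} are designed to handle, once we know the coefficients $Z_{n+h}(y)Z_n(y)\mathbbm 1_{\{S_{n+h}(y)-S_n(y)=j\}}$ behave like products of (conditionally) independent mean-zero pieces along residue-class subsequences. Concretely, as in the proof of Theorem~\ref{thmB}, one partitions $\{1,\dots,M-h\}$ into $O(\log M)$ arithmetic progressions of common difference $\sim 2\log M$ (or into the sets $\Lambda_{1,h},\Lambda_{2,h}$ of length-$h$ blocks used there) so that on each piece the relevant random variables are independent by Lemma~\ref{lem:prop1}'s analogue — here replaced by the decay-of-correlations estimate $|\Cov|\ll \rho(\Delta_n)\ll 1/n$ from \eqref{Covariance_control}, which in the proof of Proposition~\ref{prop:interaction} was already shown to suffice. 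Then Lemma~\ref{lem:technical} with $\rho_n\sim n^{-2a}$ (from $\Var(Z_{n+h}Z_n)\le\sigma_n\sigma_{n+h}\ll n^{-2a}$) and $R_N\sim N^{1-2a}$ gives, off an exceptional set of measure $\ll N^{-4}$ summed over the $O(\log M)$ pieces and the $\lfloor M^b\rfloor$ values of $h$ and the $h+1$ values of $j$, a bound $\sup_t|\sum_n \cdots e(nt)|\ll \sqrt{M^{1-2a}\log M}\cdot M^{?}$; the total off-diagonal contribution to $B_M(y)$ is then $\ll \frac{M}{\lfloor M^b\rfloor}\cdot \lfloor M^b\rfloor\cdot (h+1)\cdot\sqrt{M^{1-2a}\log M}\ll M^{1+b}\sqrt{M^{1-2a}\log M}$, and dividing by $M^{2(1-a)}$ leaves roughly $M^{b-1/2+a}\sqrt{\log M}$, which is summable along $M=\lfloor\gamma^N\rfloor$ precisely when $b+a<1/2$ — consistent with $a<1/14$ and $b$ slightly above $a$.

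The main obstacle — and the reason the argument is more delicate here than in Theorem~\ref{thmB} — is that the exponent $S_n(y)$ on $T_1$ is itself a partial sum of the random variables, so the van der Corput off-diagonal analysis involves the joint law of $(Z_{n+h}Z_n, S_{n+h}-S_n)$ rather than just $Z_{n+h}Z_n$; one must verify that conditioning on the increment $j=S_{n+h}-S_n$ does not destroy the block-independence needed for Lemma~\ref{lem:technical}, and that the correlation bound $|\Cov|\ll 1/n$ still controls the resulting higher-order covariances (this was the content of the covariance bookkeeping in Proposition~\ref{prop:interaction}, which we invoke in the same spirit). The exceptional sets must be assembled carefully: for each lacunary index $N$ we throw away a set of measure $\ll M^{-4}\log M\cdot M^b\cdot M^b = M^{-4+2b}\log M$ (union over residue classes, over $h\le M^b$, over $j\le h$), which is summable in $N$ since $b<1/14<2$, so Borel–Cantelli produces the full-measure set of good $y$. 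The remaining verification that the decay-of-correlations hypothesis, combined with $\rho(\varphi(n+1)-\varphi(n))\ll 1/n$, yields independence of the relevant families along the chosen subsequences is exactly the adaptation already carried out in Lemma~\ref{lem:prop1} and Proposition~\ref{prop:interaction}, and we will only indicate the modifications. I expect the bulk of the write-up to be this exceptional-set bookkeeping together with keeping track of the powers of $M$ and $\log M$; the structural input (van der Corput $+$ Lemma~\ref{lem:technical} $+$ the covariance decay) is already in place.
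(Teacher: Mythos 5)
Your reduction in the off-diagonal step is based on an identity that does not hold. You claim that after composing with $T_1^{-S_n(y)}$ (where $S_n(y) = X_1(y)+\cdots+X_n(y)$), the inner product $\langle v_{n+h},v_n\rangle$ becomes proportional to $\langle g_{h,j}, (T_1^{-1}T_2)^n g'_h\rangle$. This is false. After composing the integral with $T_1^{-S_n(y)}$ (equivalently $T_2^{-n}T_1^{S_n(y)}$ on the other side), one is left with $\langle g_j, T_1^{S_n(y)}T_2^{-n} g'_h\rangle$, where $g_j = T_1^j f_1\cdot\overline{f_1}$ and $g'_h = T_2^h f_2\cdot\overline{f_2}$. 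The operator $T_1^{S_n(y)}T_2^{-n}$ is \emph{not} the $n$-th power of a fixed unitary: $S_n(y)\sim n^{1-a}$ is a random, nonlinear function of $n$, so no fixed unitary $U$ satisfies $U^n=T_1^{S_n(y)}T_2^{-n}$ (even in the one-transformation case, where the exponent becomes $S_n(y)-n$, which still grows nonlinearly). Consequently Herglotz's theorem does not convert this into a trigonometric sum $\sum_n Z_{n+h}Z_n\mathbbm 1_{\{\cdot\}}e(nt)$, and Lemma~\ref{lem:technical} / Corollary~\ref{lem:Bourgain} cannot be invoked as you propose. Splitting by the \emph{increment} $j = S_{n+h}(y)-S_n(y)$ (which is genuinely bounded by $h$) is a correct observation, but it does not remove the $n$-dependence coming through $S_n(y)$ itself in the residual exponent.

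This is precisely the difficulty the paper's proof is structured around, and it is why a single application of van der Corput is not enough here. The paper applies van der Corput a \emph{second} time (with cutoff $R=\lfloor N^c\rfloor$, $c=2a+\varepsilon$) to the quantity inside the $m$-sum; only after that second differencing, and a further composition with $T_1^{-(X_1+\cdots+X_n)}T_2^{n}$, do the exponents on $T_1$ reduce to $X_{n+1}+\cdots+X_{n+m+r}$, $X_{n+1}+\cdots+X_{n+r}$, $X_{n+1}+\cdots+X_{n+m}$, each of which is bounded by $m+r\le N^b+N^c$. It is \emph{these} bounded increments (not the increment $j$ alone) that the paper splits into their possible values, and after the split, the expression $A_{5,N}$ depends only on the random variables and no longer on the measure-preserving system. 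The paper then bounds $\mathbb{E}(A_{5,N})$ directly via the decay-of-correlations hypothesis and a Cauchy–Schwarz/covariance computation, and invokes Borel–Cantelli along the lacunary sequence. There is no use of Lemma~\ref{lem:technical} or Corollary~\ref{lem:Bourgain} in the paper's proof of this lemma: the concentration inequality for trigonometric polynomials is the tool for Theorems~\ref{thmA} and \ref{thmD}, where the exponents on the transformations are genuinely linear (or polynomial) in $n$, not for this lemma where the exponent is a random partial sum. To salvage your approach you would either have to show the exceptional sets can be controlled despite the nonlinear random exponent (which requires essentially recreating the second van der Corput step), or bound the relevant sums $\sum_{n}Z_{n+h}(y)Z_n(y)\mathbbm 1_{\{S_{n+h}(y)-S_n(y)=j\}}$ directly in $L^2(\lambda)$ without any spectral input — and in doing so you would discover that these coefficients do not have mean zero (since $Z_{n+h}$ is one of the variables determining the indicator), which already precludes a direct application of Lemma~\ref{lem:technical}.
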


 \begin{proof}
Let $\varepsilon \in \Big(0,\frac{1/2-7a}{3}\Big)$, and define $b:=a+\varepsilon$. Applying Lemma (\ref{vdc}) with $v_n=Z_n(y)T_1^{X_1(y)+....+X_n(y)}f_1\cdot T_2^n f_2$ and $M:=\lfloor N^b \rfloor$, we obtain that 
\begin{equation}A_N(y):=\left\| \dfrac{1}{W_N}\sum_{n=1}^{N}Z_n(y)\cdot T_1^{X_1(y)+\cdots X_n(y)}f_1\cdot T_2^{n}f_2\right\|_{L^2}^{2}\ll A_{1,N}(y)+A_{2,N}(y),\label{vdC1}
\end{equation}
     where
\[A_{1,N}(y):=N^{-1+2a-b}\sum_{n=1}^{N}\| Z_n(y)\cdot T_1^{X_1(y)+\cdots X_n(y)}f_1\cdot T_2^{n}f_2\|_{L^2}^2\]
and
\begin{equation*}
    \begin{split}
      A_{2,N}(y):=N^{-1+2a-b}\sum_{m=1}^{M}\Big|\sum_{n=1}^{N-m}\int &Z_{n+m}(y)\cdot Z_n(y)\cdot T_1^{X_1(y)+\cdots+X_{n+m}(y)}f_1\\
      &\cdot T_2^{n+m}f_2\cdot T_1^{X_1(y)+\cdots+X_n(y)}f_1\cdot T_2^nf_2\text{d}\mu\Big|.  
    \end{split}
\end{equation*}

For simplicity, we omit the dependence on $y$, and note that the constants implicit in $\ll$ depend on this parameter. 

By Proposition \ref{prop:LLN2}, noticing that $Z_n^2=X_n-2X_n\sigma_n+\sigma_n^2\leq X_n+\sigma_n^2$, we have that
\[\sum_{n=1}^{N}Z_n^2\leq \sum_{n=1}^{N}X_n+\sum_{n=1}^{N}\sigma_n^2\ll \sum_{n=1}^N\sigma_n\ll N^{1-a}\]
almost surely. Therefore
\begin{equation}
    A_{1,N}\ll N^{-1+2a-b}\sum_{n=1}^{N}Z_n^2\ll N^{-1+2a-b}N^{1-a}=N^{a-b}
\end{equation}
almost surely.

Composing with $T_2^{-n}$, using the Cauchy-Schwarz inequality and that $\|T^mf_2\cdot f_2\|_{L^2}$ is bounded, we obtain
\[A_{2,N}\ll N^{-1+2a-b}\cdot \sum_{m=1}^{M}\left \| \sum_{n=1}^{N-m} Z_{n+m}\cdot Z_n \cdot T_2^{-n}T_1^{X_1+\cdots+X_{n+m}}f_1\cdot T_2^{-n}T_1^{X_1+\cdots+X_n}f_2\right\|_{L^2}.\]
Using the triangle inequality and the fact that
\[N^{-1+2a-b}\sum_{m=1}^{M}m\ll N^{-1+2a+b},\]
it is possible to conclude
\begin{align*}
    &A_{2,N}\\
    &\ll N^{-1+2a-b}\cdot \sum_{m=1}^{M}\left \| \sum_{n=1}^{N} Z_{n+m}\cdot Z_n \cdot T_2^{-n}T^{X_1+\cdots+X_{n+m}}f_1\cdot T_2^{-n}T_1^{X_1+\cdots+X_n}f_1\right\|_{L^2}\\
    &+N^{-1+2a-b}\cdot \sum_{m=1}^{M}\left \| \sum_{n=N-m+1}^{N} Z_{n+m}\cdot Z_n \cdot T_2^{-n}T_1^{X_1+\cdots+X_{n+m}}f_1\cdot T_2^{-n}T_1^{X_1+\cdots+X_n}f_1\right\|_{L^2}\\
    &\ll N^{-1+2a-b}\cdot \sum_{m=1}^{M}\left \| \sum_{n=1}^{N} Z_{n+m}\cdot Z_n \cdot T_2^{-n}T_1^{X_1+\cdots+X_{n+m}}f_1\cdot T_2^{-n}T_1^{X_1+\cdots+X_n}f_1\right\|_{L^2}\\
    &\qquad +N^{-d_1},
\end{align*}
 where $d_1:=1-2a-b$. Using Cauchy-Schwarz, 
 \begin{align*}
     A_{2,N}^2&\ll N^{-2d_1}\\
     &+N^{-2+4a-b}\cdot \sum_{m=1}^{M}\left \| \sum_{n=1}^{N} Z_{n+m}\cdot Z_n \cdot T_2^{-n}T_1^{X_1+\cdots+X_{n+m}}f_1\cdot T_2^{-n}T_1^{X_1+\cdots+X_n}f_1\right\|_{L^2}^2,
     \end{align*}
We use Lemma~\ref{vdc} again with $R=\lfloor N^c\rfloor$ for $c:=2a+\varepsilon$,  obtaining \begin{equation}
    A_{2,N}^2\ll N^{-2d_1}+A_{3,N}+A_{4,N},\label{vdC2}
\end{equation}
where
\[A_{3,N}:=N^{-1+4a-b-c}\sum_{m=1}^{M}\sum_{n=1}^{N}\Big \| Z_{n+m}\cdot Z_n \cdot S^{-n}T^{X_1+\cdots+X_{n+m}}f\cdot S^{-n}T^{X_1+\cdots+X_n}f\Big\|^2\]
and

\begin{equation*}
\begin{aligned}
A_{4,N}:=N^{-1+4a-b-c}\cdot&\sum_{m=1}^{M}\sum_{r=1}^{R}\biggl| \sum_{n=1}^{N-r}\int Z_{n+r+m}Z_{n+r}Z_{n+m}Z_{n}\cdot S^{-n-r}T^{X_1+\cdots+X_{n+m+r}}f\\
&\cdot S^{-n-r}T^{X_1+\cdots+X_{n+r}}f\cdot S^{-n}T^{X_1+\cdots+X_{n+m}}f\cdot S^{-n}T^{X_1+\cdots+X_n}f\biggr |.
\end{aligned}
\end{equation*}

By Corollary~\ref{cor:C_1}, 
\begin{equation}
   A_{3,N}\ll N^{-1+4a-b-c}\sum_{m=1}^{M}\sum_{n=1}^{N}Z_{n+m}^2Z_n^2\ll_{\omega} N^{-1+4a-b-c}N^{b+1-2a}=N^{-\varepsilon},\label{vdC3} 
\end{equation}
almost surely. 
Composing with $T^{-(X_1+\cdots X_n)}S^{n}$ and using that $T$ and $S$ are commutative, we get
\begin{equation*}
\begin{aligned}
A_{4,N}:=N^{-1+4a-b-c}\cdot&\sum_{m=1}^{M}\sum_{r=1}^{R}\biggl|  \int f\cdot \sum_{n=1}^{N-r}Z_{n+r+m}Z_{n+r}Z_{n+m}Z_{n}\cdot \\
& S^{-r}T^{X_{n+1}+\cdots+X_{n+m+r}}f\cdot S^{-r}T^{X_{n+1}+\cdots+X_{n+r}}f\cdot T^{X_{n+1}+\cdots+X_{n+m}}f d\mu\biggr |.
\end{aligned}
\end{equation*}

By the Cauchy-Schwarz inequality and recalling $f\in L^{\infty}(\mu)$, 
\begin{align*}
    A_{4,N}\ll & N^{-1+4a-b-c}\cdot \sum_{m=1}^{N^b}\sum_{r=1}^{N^c}\biggl \|\sum_{n=1}^{N-r}Z_{n+m+r}\cdot Z_{n+r}\cdot Z_{n+m}\cdot Z_n\cdot\\
    &T^{X_{n+1}+\cdots+X_{n+m+r}}S^{-r}f\cdot T^{X_{n+1}+\cdots+X_{n+r}}S^{-r}f\cdot T^{X_{n+1}+\cdots X_{n+m}}f\biggr \|_{L^2}
\end{align*}

Note that, for any $k\in\N$, $X_{n+1}+\cdots X_{n+k}\in \left\{0,...,k \right\}$. Hence, for every $y\in [0,1]$ and $h\in L^{\infty}(\mu)$,
\[T^{X_{n+1}(y)+\cdots+X_k(y)}h=\sum_{j=0}^{k} \mathbbm{1}_{\sum_{i=1}^{m+r}X_{n+i}(y)=j}T^{j}h.\]
After applying this idea and the triangular inequality we obtain

\begin{align*} A_{4,N}\ll &N^{-1+4a-b-c}\cdot\sum_{m=1}^{M}\sum_{r=1}^{R}\sum_{k_1=0}^{m+r}\sum_{k_2=0}^{r}\sum_{k_3=0}^m\biggl \| \sum_{n=1}^{N-r}Z_{n+m+r}\cdot Z_{n+r}\cdot Z_{n+m}\cdot Z_n\cdot \\
&\mathbbm{1}_{\sum_{i=1}^{m+r}X_{n+i}=k_1}(n)S^{-r}T^{k_1}f\cdot \mathbbm{1}_{\sum_{i=1}^{r}X_{n+i}=k_2}(n)S^{-r}T^{k_2}f\cdot \mathbbm{1}_{\sum_{i=1}^{m}X_{n+i}=k_3}(n)T^{k_3}f\biggr \|_{L^2}.
\end{align*}
Note that the random variables are viewed as constant inside the $L^2(X,\mu)$ norm, then

\begin{align*} A_{4,N}\ll A_{5,N}:= N^{-1+4a-b-c}\cdot&\sum_{m=1}^{M}\sum_{r=1}^{R}\sum_{k_1=0}^{m+r}\sum_{k_2=0}^{r}\sum_{k_3=0}^m\biggl | \sum_{n=1}^{N-r}Z_{n+m+r}\cdot Z_{n+r}\cdot Z_{n+m}\cdot Z_n\cdot \\
&\mathbbm{1}_{\sum_{i=1}^{m+r}X_{n+i}=k_1}(n)\cdot \mathbbm{1}_{\sum_{i=1}^{r}X_{n+i}=k_2}(n)\cdot \mathbbm{1}_{\sum_{i=1}^{m}X_{n+i}=k_3}(n)\biggr |.
\end{align*}

Using  equations \eqref{vdC1},\eqref{vdC2}, \eqref{vdC3} and the previous estimate, for almost every $y\in [0,1]$,
\begin{equation}
    A_{N}^2(y)\ll_{y}N^{2(a-b)}+N^{-2d_1}+N^{-\varepsilon}+A_{5,N}(y).\label{eq1_dem}
\end{equation}
Note that at this point, the upper bound for $A_N$ no longer depends on the measure-preserving system; it depends only on the random variables. This is essential to ensure we can choose a full-measure set that works for every m.p.s. 

Let us note that
\begin{equation*}
\begin{split}
    \mathbb{E}(A_{5,N})\ll N^{-1+4a+c}\sum_{m=1}^{M}\sum_{r=1}^{R}\mathbb{E}\Bigg(\dfrac{1}{m(m+r)r}\sum_{k_1=0}^{m+r}\sum_{k_2=0}^{r}\sum_{k_3=0}^{m}\biggl | \sum_{n=1}^{N-r} Z_n\cdot  U_{n,m,r,k_1,k_2,k_3}\biggr|\Bigg)
\end{split}\label{A5N}
\end{equation*} where
\[U_{n,m,r,k_1,k_2,k_3}:=Z_{n+m+r}\cdot Z_{n+r}\cdot Z_{n+m} \cdot \mathbbm{1}_{\sum_{i=1}^{m+r}X_{n+i}=k_1}\mathbbm{1}_{\sum_{i=1}^{r}X_{n+i}=k_2}\mathbbm{1}_{\sum_{i=1}^{m}X_{n+i}=k_3}.\]
Using Jensen's inequality,

\begin{equation*}
    \begin{split}
        &\Bigg(\mathbb{E}_{\omega}\Bigg(\dfrac{1}{m(m+r)r}\sum_{k_1=0}^{m+r}\sum_{k_2=0}^{r}\sum_{k_3=0}^{m}\biggl | \sum_{n=1}^{N-r} Z_n\cdot  U_{n,m,r,k_1,k_2,k_3}\biggr|\Bigg)\Bigg)^2\\
        &\ll \dfrac{1}{m(m+r)r}\mathbb{E}\Bigg(\sum_{k_1=0}^{m+r}\sum_{k_2=0}^{r}\sum_{k_3=0}^{m}\biggl | \sum_{n=1}^{N-r} Z_n\cdot  U_{n,m,r,k_1,k_2,k_3}\biggr|^2\Bigg)\\
        &\ll \dfrac{1}{m(m+r)r}\sum_{k_1=0}^{m+r}\sum_{k_2=0}^{r}\sum_{k_3=0}^{m}\sum_{n=1}^{N} \mathbb{E}\big(Z_n^2 \cdot U_{n,m,r,k_1,k_2,k_3}^2\big)\\
        &\quad +\dfrac{1}{m(m+r)r}\sum_{n_1\neq n_2}^{N-r}\mathbb{E}\Bigg(\sum_{k_1=0}^{m+r}\sum_{k_2=0}^{r}\sum_{k_3=0}^{m} Z_{n_1}  U_{n_1,m,r,k_1,k_2,k_3}\cdot Z_{n_2}  U_{n_2,m,r,k_1,k_2,k_3}\Bigg)\\
        &\ll \sum_{n=1}^N\mathbb{E}\Big(Z_n^2Z_{n+m}^2Z_{n+r}^2Z_{n+m+r}^2\Big)+\dfrac{1}{m(m+r)r}\sum_{n_1=1}^{N-r}\sum_{n_2=n_1+1}^{N-r} \mathbb{E}\Big(Z_{n_1}\widetilde{Z}_{n_1,n_2,m,r} \widetilde{U}_{n_1,n_2,m,r}\Big),
    \end{split}
\end{equation*}
where 
\begin{align*}
    \widetilde{Z}_{n_1,n_2,m,r}&:=Z_{n_1+m}Z_{n_1+r}Z_{n_1+m+r}Z_{n_2}Z_{n_2+m}Z_{n_2+r}Z_{n_2+m+r},\\
\widetilde{U}_{n_1,n_2,m,r}&:=\mathbbm{1}_{\sum_{i=1}^{m+r}X_{n_1+i}=\sum_{i=1}^{m+r}X_{n_2+i}}\mathbbm{1}_{\sum_{i=1}^{r}X_{n_1+i}=\sum_{i=1}^{r}X_{n_2+i}}\mathbbm{1}_{\sum_{i=1}^{m}X_{n_1+i}=\sum_{i=1}^{m}X_{n_2+i}}.
\end{align*}
Therefore,
\begin{equation}
    \begin{split}
        \mathbb{E}(A_{5,N})&\ll N^{-1+4a+c}\sum_{m=1}^{M}\sum_{r=1}^{R}\sqrt{ \sum_{n=1}^N\mathbb{E}\Big(Z_n^2Z_{n+m}^2Z_{n+r}^2Z_{n+m+r}^2\Big)}\\
        &+N^{-1+4a+c}\sum_{m=1}^{M}\sum_{r=1}^{R}\dfrac{1}{r}\sqrt{\sum_{n_1=1}^{N-r}\sum_{n_2=n_1+1}^{N-r} \Big|\mathbb{E}\Big(Z_{n_1}\widetilde{Z}_{n_1,n_2,m,r} \widetilde{U}_{n_1,n_2,m,r}\Big)\Big|}.
    \end{split}\label{A_5}
\end{equation}
\\

By expanding the squares and rearranging terms, we can write
\begin{equation}
    \begin{split}
        &\mathbb{E}(Z_n^2Z_{n+m}^2Z_{n+r}^2Z_{n+m+r}^2)\\
    =&\operatorname{Cov}(Z_n^2,Z_{n+m}^2Z_{n+r}^2Z_{n+m+r}^2)+\mathbb{E}(Z_n^2)\operatorname{Cov}(Z_{n+m}^2,Z_{n+r}^2Z_{n+m+r}^2)\\
    &+\mathbb{E}(Z_n^2)\mathbb{E}(Z_{n+m}^{2})\Big(\operatorname{Cov}(Z_{n+r}^2,Z_{n+m+r}^2)+\mathbb{E}(Z_{n+r}^2)\mathbb{E}(Z_{n+m+r}^{2})\Big).
    \end{split}\label{thmC:decomposition}
\end{equation}

If we write $\displaystyle h_k:=\mathbbm{1}_{I_k}-\sigma_k,$ note that
$Z_n^2=h_n^2\circ S^{\varphi(n)}$ \text{ and }$$Z_{n+m}^2Z_{n+r}^2Z_{n+m+r}^2=h_{n+m}^2\circ S^{\varphi(n+m)}\cdot h_{n+r}^2\circ S^{\varphi(n+r)}\cdot h_{n+m+r}^2\circ S^{\varphi(n+m+r)}.$$
Note that $\|h_k\|_{\operatorname{BV}}\ll 1$. Using that the system $([0,1],\mathcal{B}([0,1]),\nu,S)$ has a decay of correlation with exponential rate $\rho$, we obtain that
$$\Big|\operatorname{Cov}(Z_n^2,Z_{n+m}^2Z_{n+r}^2Z_{n+m+r}^2)\Big|\ll \rho\big( \varphi(n+\min\{m,n\})-\varphi(n)\big )\ll  \rho\big( \varphi(n+1)-\varphi(n)\big ).$$
If $r>m$, we can proceed analogously for the other terms of (\ref{thmC:decomposition}) and obtain
\begin{align*}
     \mathbb{E}(Z_n^2Z_{n+m}^2Z_{n+r}^2Z_{n+m+r}^2)\ll \rho(\varphi(n+1)-\varphi(n))+\sigma_{n}^4.
\end{align*}
In case $m\geq r$, we use the trivial estimation $\displaystyle |\mathbb{E}(Z_n^2)\operatorname{Cov}(Z_{n+m}^2,Z_{n+r}^2Z_{n+m+r}^2)|\ll \sigma_n^2.$ 
Therefore,

\begin{equation}
    \begin{split}
        &\sum_{m=1}^{M}\sum_{r=1}^{R}\sqrt{ \sum_{n=1}^N\mathbb{E}\Big(Z_n^2Z_{n+m}^2Z_{n+r}^2Z_{n+m+r}^2\Big)}\\
        &\ll  \sum_{m=1}^{M}\sum_{r=1}^{R}\sqrt{\sum_{n=1}^N \rho(\varphi(n+1)-\varphi(n))+\sigma_{n}^4}+\sum_{m=1}^{M}\sum_{r=1}^{m}\sqrt{\sum_{n=1}^N\sigma_n^2}\\
        &\ll N^{b+c}\log^{1/2}(N)+N^{b+c+1/2-2a}+N^{2b+1/2-a}\\
        &\ll N^{1/2+b+c-2a}+N^{1/2+2b-a}.
    \end{split}\label{A5_1}
\end{equation}
On the other hand, using the decay of correlation, we have that
\begin{equation*}
\begin{split}
    \Big|\mathbb{E}\Big(Z_{n_1}\widetilde{Z}_{n_1,n_2,m,r} \widetilde{U}_{n_1,n_2,m,r}\Big)\Big|&=\Big|\operatorname{Cov}(X_{n_1},\widetilde{Z}_{n_1,n_2,m,r} \widetilde{U}_{n_1,n_2,m,r})\Big|\\
    &\ll \rho \Big(\varphi(n_1+1)-\varphi(n_1)\Big).
\end{split}
\end{equation*}
Therefore,
\begin{equation}
    \begin{split}
        \sum_{m=1}^{M}\sum_{r=1}^{R}\dfrac{1}{r}\sqrt{\sum_{n_1=1}^{N-r}\sum_{n_2=n_1+1}^{N-r} \Big|\mathbb{E}\Big(Z_{n_1}\widetilde{Z}_{n_1,n_2,m,r} \widetilde{U}_{n_1,n_2,m,r}\Big)\Big|}\ll N^b\log(N)^{3/2}N^{1/2}.
    \end{split}\label{A5_2}
\end{equation}
Using (\ref{A5_1}) and (\ref{A5_2}) in (\ref{A_5}),
\begin{equation}
    \begin{split}
    \mathbb{E}(A_{5,N})&\ll N^{-1+4a+c}\Big( N^{1/2+b+c-2a}+N^{1/2+2b-a}+N^b\log(N)^{3/2}N^{1/2}\Big)\\
    &\ll N^{-1/2+4a+c+b}\Big(N^{c-2a}+N^{b-a}+\log(N)^{3/2}\Big).
    \end{split}
\end{equation}
Therefore, recalling $c=2a+\varepsilon$ and $b=a+\varepsilon$,
$$\mathbb{E}(A_{5,N})\ll N^{-d_2},$$
where $d_2:=1/2-7a-3\varepsilon>0.$ In particular, for any $\gamma>1$,
$$\sum_{N=1}^\infty \mathbb{E}(A_{5,\lfloor \gamma^N\rfloor})<\infty.$$
Hence, for almost every $y\in [0,1]$,
$$\sum_{N=1}^\infty A_{5,\lfloor \gamma^N\rfloor}(y)<\infty.$$
It follows from (\ref{eq1_dem}) that, for almost every $y\in [0,1]$ (independent of the measure-preserving system),
\begin{equation*}
       \sum_{N=1}^\infty A_{\lfloor \gamma^N\rfloor}^2(y) <\infty,
\end{equation*}
finishing the proof.
 \end{proof}

\section{Proof of Theorem  \ref{thmD}} \label{sec:ProofD}
\begin{proof}
Let us define $Z_n := X_n - n^{-a}$. Since the averages
\[
\frac{1}{N}\sum_{n=1}^N T^{n}f \cdot T^{\lfloor n^{1+b}\rfloor} g
\]
converge in $L^2$ (see \cite{BMR}), it follows, as in the previous cases, that a standard lacunary
argument reduces the problem to showing that for any $\rho>1$, almost surely, the following holds:

For any measure preserving system $(X,\mathcal{X},\mu,T)$ and for any $f,g\in L^\infty(\mu)$,
\begin{equation}
\sum_{N\in \left\{\lfloor\rho^k\rfloor:k\in \setN\right\}} \left\|\frac{1}{N^{1-a}}\sum_{n=1}^N Z_n (\omega)T^{n}f\cdot T^{\lfloor n^{1+b}\rfloor}g \right\|^2_{L^2(\mu)}<\infty.
\end{equation}
By applying van der Corput's lemma with $M=\lfloor N^c \rfloor$, $c<1$, we get
\begin{equation}
A_N^2:= \left\|\frac{1}{N^{1-a}}\sum_{n=1}^N Z_n T^{n}f\cdot T^{\lfloor n^{1+b}\rfloor}g \right\|^2\ll  A_{1,N}+A_{2,N},
\end{equation}
where 
\begin{align*}
A_{1,N}&= \frac{N^{1-c}}{N^{2-2a}}\sum_{n=1}^N \left\|Z_n\cdot T^n f \cdot T^{\lfloor n^{1+b}\rfloor}g \right\|_{L^2(\mu)}^2 \text{  and }\\
A_{2,N}&= \frac{N^{1-c}}{N^{2-2a}}\sum_{m=1}^{\lfloor N^{c}\rfloor} \left|\int_{X}\sum_{n=1}^{N-m} Z_n Z_{n+m} T^{n}f \cdot T^{\lfloor n^{1+b}\rfloor}g \cdot T^{n+m} \overline{f} \cdot T^{\lfloor {(n+m)}^{1+b}\rfloor} \overline{g}  d\mu\right|.
\end{align*}

Recalling that the random variables are independent, by the strong law of large numbers and using the fact that $\mathbb{E}(X_n^2)=n^{-a}$, almost surely we have
\begin{align*}
A_{1,N} \leq  \frac{N^{1-c}}{N^{2-2a}}\sum_{n=1}^N |Z_n|^2 \ll \frac{N^{1-c}}{N^{2-2a}}\cdot N^{1-a}=N^{a-c}.
\end{align*}
Composing with $T^{-n}$ and using the Cauchy-Schwarz inequality, we get 
\begin{align*}
A_{2,N} &= \frac{N^{1-c}}{N^{2-2a}}\sum_{m=1}^{\lfloor N^{c}\rfloor} \left|\int_{X}\sum_{n=1}^{N-m} Z_n Z_{n+m} f\cdot  T^{\lfloor n^{1+b}-n\rfloor}g \cdot T^{m} \overline{f} \cdot T^{\lfloor {(n+m)}^{1+b}-n\rfloor} \overline{g}  d\mu\right|\\
&\leq \frac{N^{1-c}}{N^{2-
2a}}\sum_{m=1}^{\lfloor N^{c}\rfloor} \left\|\sum_{n=1}^{N-m} Z_n Z_{n+m}  T^{\lfloor n^{1+b}-n\rfloor}g \cdot T^{\lfloor {(n+m)}^{1+b}-n\rfloor} \overline{g} \right\|_{L^2(\mu)}
\end{align*}
Observe that if $0<b<1$, $1\leq m \leq N^c$  and  $1\leq n \leq N$  then we have \begin{equation}
0\leq (n+m)^{1+b}-n^{1+b}\leq N^{(b+c)}.
\end{equation}

Hence, for every $m\in [\lfloor N^c\rfloor]$, we can partition the set $[N]$ as $$[N]= \bigsqcup_{r=1 }^{\lfloor N^{(b+c)}\rfloor} S_{m,r},$$ where $$S_{m,r}=\left\{n\in [N]: \lfloor {(n+m)}^{1+b}-n\rfloor- \lfloor n^{1+b}-n\rfloor=r \right\}.$$ Thus, we get
\begin{align*}
A_{2,N}&\leq \frac{N^{1-c}}{N^{2-2a}}\sum_{m=1}^{\lfloor N^{c}\rfloor} \sum_{r=1}^{\lfloor N^{(b+c)}\rfloor} \left\|\sum_{n\in [N-m]\cap S_{m,r}} Z_n Z_{n+m}  T^{\lfloor n^{1+b}-n\rfloor}g \cdot T^{(\lfloor {n}^{1+b}-n\rfloor+r)}\overline{g} \right\|_{L^2(\mu)}\\
\end{align*}
Letting $g_r=g\cdot T^r \overline{g}$, we get
\begin{align}\label{eq:bound}
A_{2,N}&\leq \frac{N^{1-c}}{N^{2-2a}}\sum_{m=1}^{\lfloor N^{c}\rfloor} \sum_{r=1}^{\lfloor N^{(b+c)}\rfloor} \left\|\sum_{n\in [N-m]\cap S_{m,r}} Z_n Z_{n+m}  T^{\lfloor n^{1+b}-n\rfloor}g_r \right\|_{L^2(\mu)}.
\end{align}
Using Herglotz's spectral theorem and the fact that $g_r$ is bounded by $1$, we get
\begin{equation}\label{eq:spectral}
\begin{aligned}
 &\left\|\sum_{n\in [N-m]\cap S_{m,r}} Z_n Z_{n+m}  T^{\lfloor n^{1+b}-n\rfloor}g_r \right\|_{L^2(\mu)} \\
 &\leq \max_{m\in \lfloor N^c\rfloor} \max_{r\in [N^{b+c}]} \sup_{t\in [0,1]} \left|\sum_{n\in [N-m]\cap S_{m,r}} Y_n Y_{n+m}  e\left({\lfloor n^{1+b}-n\rfloor}t\right) \right|
\end{aligned}
\end{equation}

For fixed $r\in [N^{b+c}]$ and $m\in [N^c]$, we want to apply Lemma \ref{lem:technical} for $Z_{m,n}=Z_n \cdot Z_{n+m}$, $s(n)={\lfloor n^{1+b}-n\rfloor}$ and $\Lambda= S_{m,r}\cap [N-m]\cap \Lambda_{k,m}$, $k=1,2$ where $\Lambda_{k,m}$ are defined as follows:
\begin{equation*}
\begin{aligned}
\Lambda_{1,m}&=\{n: 2km<n\leq (2k+1)m \text{ for some non-negative integer }k\}, \text{ and }\\
\Lambda_{2,m}&=\{n: (2k+1)m<n\leq (2k+2)m \text{ for some non-negative integer }k\}.
\end{aligned}
\end{equation*}
Observe that $\Lambda_{1,m}\cup\Lambda_{2,m}=\setN$, $\{Z_{m,n}: n\in \Lambda_{k,m}\}$ is independent, and 
\begin{align*}
\text{Var}(Z_{m,n})&= \sigma_n \sigma_{m+n}-\sigma_n\sigma_{n+m}^2-\sigma_n^2\sigma_{n+m}+ \sigma_n^2 \sigma_{n+m}^2\\
&\leq \sigma_n\sigma_{n+m}\leq \sigma_n^2\sim n^{-2a}.
\end{align*}
Thus, condition \eqref{eq:condition} is satisfied with $\rho_n= n^{-2a}$. By Lemma~\ref{lem:technical} we get 

\begin{equation*}
\mathbb{P}\left(\sup_{t\in [0,1]} \left|\sum_{n\in [N-m]\cap S_{m,r}} Z_n Z_{n+m}  e\left({\lfloor n^{1+b}-n\rfloor}t\right) \right| \geq 2A\sqrt{ R_N \log N} \right) \leq  \frac{2}{N^4}.
\end{equation*} 
Hence,
\begin{align*}
&\mathbb{P}\left(\max_{m\in \lfloor N^c\rfloor} \max_{r\in [N^{b+c}]}\sup_{t\in [0,1]} \left|\sum_{n\in [N-m]\cap S_{m,r}} Z_n Z_{n+m}  e\left({\lfloor n^{1+b}-n\rfloor}t\right) \right| \geq 2A\sqrt{ R_N \log N} \right)\\&=
\mathbb{P}\left(\bigcup_{m\in \lfloor N^c\rfloor} \bigcup_{r\in [N^{b+c}]}\sup_{t\in [0,1]} \left|\sum_{n\in [N-m]\cap S_{m,r}} Z_n Z_{n+m}  e\left({\lfloor n^{1+b}-n\rfloor}t\right) \right| \geq 2A\sqrt{ R_N \log N} \right)\\
&\leq \sum_{m\in [N^c]} \sum_{r\in [N^{b+c}]} \mathbb{P}\left(\sup_{t\in [0,1]} \left|\sum_{n\in [N-m]\cap S_{m,r}} Z_n Z_{n+m}  e\left({\lfloor n^{1+b}-n\rfloor}t\right) \right| \geq 2A\sqrt{ R_N \log N} \right)\\
&\leq \frac{2N^{b+2c}}{N^4}\\
&\leq \frac{2}{N^2}.
\end{align*}
By the Borel-Cantelli lemma, almost surely we have
\begin{equation*}
\max_{m\in  [N^c]} \max_{r\in [N^{b+c}]} \sup_{t\in [0,1]} \left| \sum_{[N]\cap J_{m,r}} Z_{m,n}\cdot e(s(n)t)\right| \ll_{\omega} \left(\sqrt{\log N\cdot \sum_{n=1}^N \rho_n }\right).
\end{equation*}

Combining \eqref{eq:bound} and \eqref{eq:spectral} we get,

\begin{align*}
A_{2,N} &\ll_{\omega} \frac{N^{1-c}}{N^{2-2a}}\sum_{m=1}^{\lfloor N^{c}\rfloor} \sum_{r=1}^{\lfloor N^{(b+c)}\rfloor} \sqrt{\log N} \cdot N^{\frac{1}{2}-a}\\ &\leq N^{1-c-2+2a+c+b+c+\frac{1}{2}-a}= N^{a+c+b-\frac{1}{2}}.
\end{align*}
In conclusion, almost surely $(A_N^2)$ is summable along any lacunary sequence as long as we have
\begin{equation}\label{eq:restriction}
b<1, a<c<1 \text{ and } b+c+a<\frac{1}{2}.
\end{equation}
Since we are given $(b+2a)<\frac{1}{2}$, it is possible to choose $c$ satisfying \eqref{eq:restriction}. This completes the proof of the theorem.
\end{proof}

\bibliographystyle{abbrv}
\bibliography{refs,ref}
\end{document}